
\documentclass{birkjour}

\usepackage{amsmath,amssymb,amsthm}
\usepackage{color}

\newcommand{\R}{\mathbb{R}}

\newcommand{\N}{\mathbb{N}}
\newcommand{\Z}{\mathbb{Z}}
\renewcommand{\S}{\mathcal{S}}
\newcommand{\F}{\mathcal{F}}

\newcommand{\supp}{\mathop{\mathrm{supp}}}
\newcommand{\grad}{\mathop{\mathrm{grad}}}
\newcommand{\sgn}{\mathop{\mathrm{sgn}}}
\renewcommand{\i}{\mathbf{i}}

%
%
%
 \newtheorem{thm}{Theorem}[section]
 \newtheorem{cor}[thm]{Corollary}
 \newtheorem{lem}[thm]{Lemma}
 \newtheorem{prop}[thm]{Proposition}
 \theoremstyle{definition}
 \newtheorem{defn}[thm]{Definition}
 \theoremstyle{remark}
 \newtheorem{rem}[thm]{Remark}
 
 \numberwithin{equation}{section}

\begin{document}

%
%
%
%
%
%
%
%
%

\title[Superposition Operators on Weighted Modulation Spaces]
 {Non-analytic Superposition Results on Modulation Spaces with Subexponential Weights}

\author{Maximilian Reich}

\address{%
Fakult\"at f\"ur Mathematik und Informatik\\
TU Bergakademie Freiberg, Germany}

\email{maximilian.reich@math.tu-freiberg.de}

\author{Michael Reissig}
\address{%
Fakult\"at f\"ur Mathematik und Informatik\\
TU Bergakademie Freiberg, Germany}

\email{reissig@math.tu-freiberg.de}
\author{Winfried Sickel}
\address{%
Mathematisches Institut \\
Friedrich-Schiller-Universit\"at Jena, Germany}
\email{winfried.sickel@uni-jena.de}
\subjclass{}

\keywords{modulation spaces, subexponential weights, Gevrey spaces, multiplication algebras, superposition operators}

\date{April 1, 2015}

\begin{abstract}
Motivated by classical results for Gevrey spaces and their applications to nonlinear partial differential 
equations we define so-called Gevrey-modulation spaces. 
We establish analytic as well as non-analytic superposition results on Gevrey-modulation spaces. 
These results are extended to a special weighted  modulation space where the weight increases stronger than any polynomial but less than 
as in the  Gevrey case.
\end{abstract}

\maketitle


\section{Introduction} \label{introduction}


Gevrey analysis is an effective tool to treat several models of partial differential equations. 
Instead of treating the model in the physical space considerations in the phase space are more appropriate. 
For us, a Gevrey function can be characterized by it's behavior on the Fourier transform side, i.e.,
	\[ f\in \mathcal{G}_s \qquad \Longleftrightarrow \qquad e^{\langle \xi \rangle^{\frac{1}{s}}} \F f(\xi) \in L^2(\R^n), \]
	where $s>1$.\\	
In \cite{brs} the Gevrey example is considered, that is, the Cauchy problem
	\begin{equation} \label{Gevrey1}
		u_{tt} - u_x = 0, \quad u(0,x) = \phi(x), \quad u_t(0,x) = \psi(x).
	\end{equation}
It is globally (in time) well-posed in the Gevrey space $\mathcal{G}_s$ if and only if $s < 2$. \\
Another Cauchy problem which is reasonable to consider in Gevrey spaces is the following one:
	\begin{equation} \label{Gevrey3}
		u_{tt} - a(t)u_{xx} = 0, \quad u(0,x) = \phi(x), \quad u_t(0,x) = \psi(x).
	\end{equation}
	If we suppose that the positive coefficient $a=a(t)$ belongs to the 
H\"older space  $C^{\alpha}[0,T]$, $0<\alpha<1$, then \eqref{Gevrey3} is globally (in time) well-posed in the 
Gevrey space $\mathcal{G}_s$ for $s < \frac{1}{1-\alpha}$ which is explained in \cite{colombini}.\\
If one is interested in the solvability behavior of the corresponding semi-linear Cauchy problems
\begin{equation} \label{Gevrey2}
		u_{tt} - u_x = f(u), \quad u(0,x) = \phi(x), \quad u_t(0,x) = \psi(x),
	\end{equation}
or	
\begin{equation} \label{Gevrey4}
		u_{tt} - a(t)u_{xx} = f(u), \quad u(0,x) = \phi(x), \quad u_t(0,x) = \psi(x)
	\end{equation}
with an admissible nonlinearity $f=f(u)$ and Gevrey data $\phi$ and $\psi$, then one of the first steps is to explain superposition 
operators in $\mathcal{G}_s$. This was done in 	
\cite{brs}. There appropriate superposition operators are studied in spaces which are defined by the behavior of the Fourier transform 
as in case of  $\mathcal{G}_s$. \\
In the present paper we devote ourselves to modulation spaces. In various papers Wang et all \cite{wangExp,wang,wang1,wang2} have shown 
that modulation spaces may serve as a reasonable tool when studying existence and regularity of linear and nonlinear partial 
differential equations.
It will turn out that modulation spaces equipped with some admissible subexponential weights allow to prove for superposition operators 
similar results as in \cite{brs}. \\
The paper is organized as follows. First of all an approach to modulation spaces $M_{p,q}^s(\R^n)$ and Gevrey-modulation spaces 
$\mathcal{GM}_{p,q}^s(\R^n)$ is chosen as introduced in \cite{wangExp}. After obtaining boundedness of functions in 
Gevrey-modulation spaces we are interested in investigating the behavior of analytic nonlinearities. Therefore it is 
sufficient to prove algebra properties which is done  in Section \ref{Modulationspaces}.\\
In Section \ref{Gevreysuperposition} we are able to find non-analytic functions $f\in C^\infty(\R^n)$ such that the 
corresponding superposition operator $T_f$ defined by
\[ T_f:  u \to T_f u:= f(u)\] maps the Gevrey-modulation space $\mathcal{GM}_{p,q}^s(\R^n)$ into itself.\\
In Section \ref{Ultradifferentiablesuperposition} we shall study superposition operators in a special modulation 
space of ultra-differentiable functions with another type of subexponential weight. This space contains all Gevrey spaces.\\
The main concern of this paper is to prove some analytic as well as non-analytic superposition results in 
particularly weighted modulation spaces.\\
Some open problems and concluding remarks complete the paper (see Section \ref{finalcomments}).


\section{Modulation Spaces} \label{Modulationspaces}



\subsection{Definitions} \label{Definitions}


First of all we introduce some basic notations and definitions. In $\R^n$ the notation of 
multi-indices $\alpha = (\alpha_1, \ldots , \alpha_n)$ is used, where $|\alpha|=\sum_{j=1}^n \alpha_j$.
 Given two multi-indices $\alpha$ and $\beta$, then $\alpha \leq \beta$ means $\alpha_j \leq \beta_j$ for $1\leq j\leq n$. 
Furthermore let $f$ be a function on $\R^n$ and $x\in\R^n$, then
	\[ x^\alpha = \prod_{j=1}^n x_j^{\alpha_j} \]
	and
\[ D^\alpha f(x) = \frac{1}{\i^{|\alpha|}} \frac{\partial^\alpha}{\partial x^\alpha} f(x) = 
\frac{1}{\i^{|\alpha|}} \Big(\prod_{j=1}^n \frac{\partial^{\alpha_j}}{\partial x_j^{\alpha_j}} \Big)f(x). 
\]
	A function $f\in C^\infty (\R^n)$ belongs to the Schwartz space $\S(\R^n)$ if and only if
		\[ \sup_{x\in \R^n} | x^\alpha D^\beta f(x)| < \infty \]
		for all multi-indices $\alpha, \beta$. The set of all tempered distributions is denoted by $\S'(\R^n)$ which is the dual space of 
		$\S(\R^n)$. \\
We introduce $\langle \xi\rangle_m^s := (m^2+|\xi|^2)^{\frac{s}{2}}$.
		If $m=1$, then we write $\langle \xi \rangle^s$ for simplicity. \\
		The notation $a \lesssim b$ is equivalent to $a \leq Cb$ with a positive constant $C$. Moreover, by writing $a \asymp b$ we mean $a \lesssim b \lesssim a$. The Fourier transform of 
an admissible function $f$ is defined by
		\[ \F f(\xi) = \hat{f} (\xi) = (2\pi)^{-\frac{n}{2}} \int_{\R^n} f(x) e^{-\i x\cdot \xi} \, dx \quad (x,\xi \in\R^n). \]
		Analogously the inverse Fourier transform is defined by
		\[ \F^{-1} \hat{f}(x) = (2\pi)^{-\frac{n}{2}} \int_{\R^n} \hat{f}(\xi) e^{\i x\cdot \xi} \, d\xi \quad (x,\xi \in\R^n). \]		 
		In order to describe local frequency properties of a function $f$ we define the following joint time-frequency representation.
		\begin{defn} \label{STFT}
			Let $\phi$ be the so-called window function which is a fixed function that is not identically zero. 
Then the short-time Fourier transform (STFT) of a function $f$ with respect to $\phi$ is defined as
			\[ V_\phi f(x,\xi) = (2\pi)^{-\frac{n}{2}} \int_{\R^n} f(s) \overline{\phi(s-x)} e^{-\i s\cdot \xi} ds 
\qquad (x,\xi \in \R^n). \]
		\end{defn}

We want to define a family of Banach spaces which controls globally the joint time-frequency 
information. Therefore we introduce weighted modulation spaces, where we will use particular weights. A more detailed 
discussion about weight functions can be found in Chapter 11 in \cite{groechenig}.

\begin{defn} \label{modCont}
Let $1\leq p,q\leq\infty$. Let $\phi\in \S(\R^n)$ be a 
fixed window and assume $s,\sigma \in\R$ to be the weight parameters. Then the weighted modulation 
space $\mathring{M}_{p,q}^{s,\sigma}(\R^n)$ is the set
			\[ \mathring{M}_{p,q}^{s,\sigma} (\R^n) := \{ f\in \S'(\R^n): \|f\|_{\mathring{M}_{p,q}^{s,\sigma}} < \infty \}, \]
			where the norm is defined as
			\[ \|f\|_{\mathring{M}_{p,q}^{s,\sigma}} = \Big( \int_{\R^n} 
\Big( \int_{\R^n} |V_\phi f(x,\xi) \langle x\rangle^\sigma \langle \xi\rangle^s|^p dx \Big)^{\frac{q}{p}} d\xi \Big)^{\frac{1}{q}}. \]
			For $p=\infty$ and/or $q=\infty$ the definition can be obviously modified by taking $L^\infty$-norms.
		\end{defn}
\begin{rem}
(i) 
If $s=\sigma=0$, then we obtain the so-called standard modulation space $\mathring{M}_{p,q}(\R^n)$. 
For $\sigma=0$, i.e., no weight with respect to the $x$-variable, the weighted modulation space is denoted by $\mathring{M}_{p,q}^s(\R^n)$. 
Subsequently the space $\mathring{M}_{p,q}^{s,\sigma}(\R^n)$ is just referred to as modulation space unless it is explicitly stated differently. 
\\
(ii) A rough interpretation is as follows.
The weight in $x$ in the preceding definition corresponds to some growth or decay properties of $f$. On the other hand 
the weight in $\xi$ corresponds to regularity properties of $f$ in $\mathring{M}_{p,q}^{s,\sigma} (\R^n)$.
\\
(iii) General references with respect to (weighted) modulation spaces are 
Feichtinger \cite{feichtinger}, Groechenig \cite{groechenig}, Toft \cite{toftConv}, Triebel \cite{trzaa} and
Wang et. all \cite{wangExp} to mention only a few. 
\end{rem}

At this point we want to go back to the alternative approach to the STFT. 
Since we aim at specific superposition results on weighted modulation spaces it will turn out that introducing 
the following approach to the STFT will be convenient. We are basically adopting the idea of obtaining local frequency properties 
of a function $f$. 
Related  frequency decomposition techniques are explained in \cite{groechenig}. A special case, the so-called frequency-uniform 
decomposition, was independently introduced by Wang (e.g., see \cite{wang}). Let $\rho: \R^n \mapsto [0,1]$ be a Schwartz function which 
is compactly supported in the cube 
\[
Q_0 := \{ \xi \in \R^n: -1\leq \xi_i \leq 1,\:  i=1,\ldots,n \}\, .
\] 
Moreover, 
\[
\rho(\xi)=1 \qquad \mbox{if}  \quad  |\xi_i|\leq \frac{1}{2}\, , \qquad i=1, 2, \ldots \, n. 
\]
With $\rho_k (\xi) :=\rho (\xi-k) $, $\xi \in \R^n$, $k \in \Z^n$, it follows
\[
\sum_{k \in \Z^n} \rho_k (\xi)\ge 1 \qquad \mbox{for all}\quad \xi \in \R^n\, .
\]
Finally we define
\[ 
\sigma_k(\xi) := \rho_k(\xi) \Big(\sum_{k\in\Z^n} \rho_k(\xi)\Big)^{-1}, \qquad \xi \in \R^n\, , \quad k\in\Z^n \, .
\]
The following  properties are obvious: 
		\begin{itemize}
			\item $ 0 \le \sigma_k(\xi) \le 1$ for all $\xi \in \R^n$;
			\item $\supp \sigma_k \subset Q_k := \{ \xi \in \R^n: -1\leq \xi_i -k_i \leq 1, \: i=1,\ldots,n \} $;
			\item $\displaystyle \sum_{k\in\Z^n} \sigma_k(\xi) \equiv 1$ for all $\xi\in\R^n$;
			\item There exists a constant $C>0$ such that $\sigma_k (\xi) \ge C$ if $ \max_{i=1, \ldots \, n}\, |\xi_i-k_i|\leq \frac{1}{2}$; 
			\item For all $m \in \N_0$ there exist positive constants $C_m$ such that for $|\alpha|\leq m$
\[
\sup_{k \in \Z^n}\, \sup_{\xi \in \R^n} \, |D^\alpha \sigma_k(\xi)|\leq C_m\,   .
\]
		\end{itemize}
		The operator
		\[ \Box_k := \F^{-1} \left( \sigma_k \F (\cdot) \right), \quad k\in\Z^n, \]
		is called uniform decomposition operator. \\

As it is well-known there is an equivalent description of the 
modulation spaces by means of the uniform decomposition operator, see Feichtinger \cite{feichtinger}.

\begin{defn} \label{defdecomp}
Let $1\leq p,q \leq \infty$ and assume $s \in\R$ to be the weight parameter. Suppose the window $\rho\in \S(\R^n)$ is compactly supported. 
Then the weighted modulation space $M_{p,q}^{s}(\R^n)$ consists of all tempered distributions $f\in \S'(\R^n)$ such that their norm
\[ \|f\|_{M_{p,q}^{s}} = \Big( \sum_{k\in\Z^n} \langle k \rangle^{sq} \|\Box_k f\|_{L^p}^q \Big)^{\frac{1}{q}} \]
is finite with obvious modifications when $p=\infty$ and/or $q=\infty$.
\end{defn}

\begin{prop} \label{normequivalence}
The norms of the Definitions \ref{modCont} and \ref{defdecomp} are equivalent. Let $f\in \S'(\R^n)$. Then it holds
		\[ C_1 \|f \|_{\mathring{M}_{p,q}^{s}} \leq \|f\|_{M_{p,q}^{s}} \leq C_2 \|f\|_{\mathring{M}_{p,q}^{s}}, \]
		where the positive constants $C_1$ and $C_2$ are depending on the dimension $n$, the window function and on the 
frequency-uniform decomposition, respectively.
	\end{prop}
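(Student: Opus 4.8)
The plan is to exploit the fundamental time-frequency covariance identity relating the short-time Fourier transform to the uniform decomposition operator, and then to pass from an integral over the frequency variable to a sum over the lattice $\Z^n$. I would organize the argument in three stages: window independence, a frequency-side reformulation, and a discretization step.

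First I would record that the norm $\|\cdot\|_{\mathring{M}_{p,q}^{s}}$ is, up to equivalence, independent of the particular window $\phi\in\S(\R^n)\setminus\{0\}$. This follows from the STFT reproducing formula: for two windows $\phi,\gamma$ with $\langle\gamma,\phi\rangle\neq0$ one has $V_\phi f=\frac{1}{\langle\gamma,\phi\rangle}(V_\gamma f)\ast\Phi$ in the time-frequency plane, where the convolution kernel $\Phi=V_\phi\gamma$ is again a Schwartz function and hence decays faster than any polynomial. Convolution estimates against this rapidly decaying kernel, combined with the fact that the weight $\langle\xi\rangle^{s}$ is polynomially moderate (i.e.\ $\langle\xi+\eta\rangle^{s}\lesssim\langle\xi\rangle^{s}\langle\eta\rangle^{|s|}$), yield equivalence of the norms for different windows. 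Consequently I am free to choose a convenient window later.

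Second, I would use the covariance identity $V_\phi f(x,\xi)=e^{-\i x\cdot\xi}\,V_{\widehat\phi}\widehat f(\xi,-x)$, which transfers the frequency variable $\xi$ into the position slot of the STFT of $\widehat f$. Expressing the inner $L^p_x$-norm via this identity shows that $\|V_\phi f(\cdot,\xi)\|_{L^p}$ is comparable to $\|\F^{-1}(\widehat f\cdot\overline{\widehat\phi(\cdot-\xi)})\|_{L^p}$, i.e.\ to the $L^p$-norm of $f$ after its Fourier transform has been localized to a neighborhood of $\xi$ by the bump $\overline{\widehat\phi(\cdot-\xi)}$. This is exactly the same type of object as $\Box_k f=\F^{-1}(\sigma_k\widehat f)$, the only difference being that the localizing bump is a translate of a fixed Schwartz function rather than the specific cutoff $\sigma_k$. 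For $\xi\in Q_k$ the moderateness of the weight gives $\langle\xi\rangle^{s}\asymp\langle k\rangle^{s}$ with constants depending only on $n$ and $s$, so the weight can be pulled out of each cube, and it remains to compare $\int_{Q_k}\|V_\phi f(\cdot,\xi)\|_{L^p}^{q}\,d\xi$ with $\|\Box_k f\|_{L^p}^{q}$, uniformly in $k$. I would do this by inserting a finite partition: since each $\sigma_j$ has support in $Q_j$ and the cubes $Q_j$ have uniformly bounded overlap, for $\xi\in Q_k$ the localizer $\overline{\widehat\phi(\cdot-\xi)}$ can be written as $\overline{\widehat\phi(\cdot-\xi)}\sum_{|j-k|\le C}\sigma_j$ plus a rapidly decaying tail, so that the two localizations differ by a Fourier multiplier whose symbol is a modulated, translated copy of a fixed Schwartz bump; summation in $k$ against the slowly varying weight then completes the equivalence.

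I expect the main obstacle to be precisely this last multiplier step: one must bound the passage between the continuous localizer $\overline{\widehat\phi(\cdot-\xi)}$ and the discrete cutoffs $\sigma_k$ by constants that do not depend on $k$ or on $\xi\in Q_k$, and simultaneously control the tails produced by the Schwartz decay of $\widehat\phi$ so that they remain summable against the polynomial weight. The relevant multipliers are translates and modulations of a single Schwartz function, so their convolution kernels have uniformly bounded $L^1$-norms and hence act on $L^p$ with uniform bounds; the finite-overlap property of the $\{Q_k\}$ and the uniform derivative bounds $\sup_{k}\sup_{\xi}|D^\alpha\sigma_k(\xi)|\le C_m$ listed above are the structural facts that make these uniform constants available.
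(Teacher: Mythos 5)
Your outline is the standard one and is essentially what the references cited in the paper (Wang--Hudzik, Prop.~2.1) carry out: window independence via $V_\phi f=\langle\gamma,\phi\rangle^{-1}(V_\gamma f)\ast V_\phi\gamma$, the covariance identity $|V_\phi f(x,\xi)|=|V_{\widehat\phi}\widehat f(\xi,-x)|$ turning $\|V_\phi f(\cdot,\xi)\|_{L^p}$ into $\|\F^{-1}[\widehat f\,\overline{\widehat\phi(\cdot-\xi)}]\|_{L^p}$, the moderateness $\langle\xi\rangle^s\asymp\langle k\rangle^s$ on $Q_k$, and a discretization. The direction $\|f\|_{\mathring M^s_{p,q}}\lesssim\|f\|_{M^s_{p,q}}$ is genuinely covered by what you wrote: expanding $\overline{\widehat\phi(\cdot-\xi)}=\sum_j\overline{\widehat\phi(\cdot-\xi)}\,\sigma_j$ and using Young's inequality gives $\|V_\phi f(\cdot,\xi)\|_{L^p}\lesssim\sum_j(1+|j-k|)^{-M}\|\Box_j f\|_{L^p}$ uniformly for $\xi\in Q_k$, and the tail is summable against the polynomial weight by a discrete convolution estimate.

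The gap is in the converse direction $\|f\|_{M^s_{p,q}}\lesssim\|f\|_{\mathring M^s_{p,q}}$. Your claim that ``the two localizations differ by a Fourier multiplier whose symbol is a modulated, translated copy of a fixed Schwartz bump'' is only true one way. To dominate $\|\Box_k f\|_{L^p}=\|\F^{-1}[\sigma_k\widehat f]\|_{L^p}$ by $\|\F^{-1}[\widehat f\,\overline{\widehat\phi(\cdot-\xi)}]\|_{L^p}$ you must factor $\sigma_k(\eta)=\bigl[\sigma_k(\eta)/\overline{\widehat\phi(\eta-\xi)}\bigr]\cdot\overline{\widehat\phi(\eta-\xi)}$, and the quotient is not a translate of a fixed Schwartz bump; for a general Schwartz window it need not even be defined, since $\widehat\phi$ may vanish on an open set. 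This is exactly where the window freedom you secured in your first stage has to be cashed in: choose $\phi$ (e.g.\ a Gaussian) with $|\widehat\phi|\ge c>0$ on $[-2,2]^n$, so that for $\xi\in Q_k$ and $\eta\in\supp\sigma_k$ one has $\eta-\xi\in[-2,2]^n$ and $\sigma_k/\overline{\widehat\phi(\cdot-\xi)}$ is a Fourier multiplier with $\F^{-1}$-$L^1$-norm bounded uniformly in $k$ and in $\xi\in Q_k$ (by the uniform derivative bounds on $\sigma_k$). Since the resulting bound $\|\Box_k f\|_{L^p}\lesssim\|V_\phi f(\cdot,\xi)\|_{L^p}$ holds for \emph{every} $\xi\in Q_k$, you then recover the integral by averaging: $\|\Box_k f\|_{L^p}^q\le C\,|Q_k|^{-1}\int_{Q_k}\|V_\phi f(\cdot,\xi)\|_{L^p}^q\,d\xi$, and summation over $k$ with the weight finishes the proof. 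Without the non-vanishing choice of window and this averaging step, the second inequality is not established.
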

	\begin{proof}
		Cf. Proposition 2.1 in \cite{wang} and Proposition 1.12 in \cite{reich}.
	\end{proof}

In what follows we shall always work with the uniform decomposition operator.


	\subsection{Gevrey-Modulation Spaces} \label{Gevreymodulationspace}


There is a famous classical result by Katznelson \cite{Ka} (in the periodic case) and by 
Helson, Kahane,  Katznelson,  Rudin \cite{HKKR} (nonperiodic case) 
which says that only analytic functions operate on the Wiener algebra $\mathcal{A}(\R^n)$.
More exactly, the operator $T_f : ~ u \mapsto f(u)$ maps $\mathcal{A}(\R^n)$ into $\mathcal{A}(\R^n)$ if and only 
if $f(0)=0$ and $f$ is analytic. Here $\mathcal{A}(\R^n)$ is the collection of all $u \in C (\R^n)$ such that 
$\F u \in L^1 (\R^n)$. Moreover, a similar result is obtained for particular standard modulation spaces. 
In \cite{Bihami2} is stated that $T_f$ maps $M_{1,1}$ into $M_{1,1}$ if and only if $f(0)=0$ and $f$ is analytic. 
Therefore, the existence of non-analytic superposition results for weighted modulation spaces  is a priori not so clear.
Here we are interested in weigthed modulation spaces with different weights than used above.

\begin{defn} \label{modExpWeight}
The integrability parameters are given by $1\leq p,q \leq \infty$. Let $\rho\in C_0^\infty (\R^n)$ be a fixed window and 
assume $s > 0$ to be the weight parameter. By $(\sigma_k)_k$ we denote the associated uniform decomposition of unity
in the above sense.
Then the Gevrey-modulation space $\mathcal{GM}_{p,q}^{s}(\R^n)$ is the collection of all $f \in L^p (\R^n)$
such that 
\[ \|f\|_{\mathcal{GM}_{p,q}^{s}} = \Big( \sum_{k\in\Z^n} e^{q |k|^{\frac{1}{s}}} \, \|\Box_k f\|_{L^p}^q \Big)^{\frac{1}{q}} < \infty \]
(with obvious modifications when $p=\infty$ and/or $q=\infty$).
\end{defn}

\begin{rem}
 \rm
(i) Modulation spaces with general weights have been also considered by Gol'dman \cite{go}
(general function spaces of Besov type) and Triebel \cite{trzaa} (trace problems).
\\
(ii) We shall call the weights $w(x):= e^{|x|^{\frac{1}{s}}} $, $x \in \R^n$,  with $s>1$ subexponential.
\end{rem}

It is not difficult to prove the following basic facts. 

\begin{lem}\label{basic}
(i) The Gevrey-modulation space $\mathcal{GM}_{p,q}^{s}(\R^n)$ is a Banach space.
\\
(ii) $\mathcal{GM}_{p,q}^{s}(\R^n)$ is independent of the choice of the window $\rho\in C_0^\infty (\R^n)$ in the sense of equivalent norms.
\\
(iii) $\mathcal{GM}_{p,q}^{s}(\R^n)$ has the Fatou property, i.e., if $(f_m)_{m=1}^\infty \subset \mathcal{GM}_{p,q}^{s}(\R^n)$ 
is a sequence such that 
$ f_m \rightharpoonup f $ (weak convergence in $S'(\R^n)$) and 
\[
 \sup_{m \in \N}\,  \|\, f_m \, \|_{\mathcal{GM}_{p,q}^{s}} < \infty\, , 
\]
then $f \in \mathcal{GM}_{p,q}^{s}$ follows and 
\[
\|\, f \, \|_{\mathcal{GM}_{p,q}^{s}} \le \sup_{m\in \N}\,  \|\, f_m \, \|_{\mathcal{GM}_{p,q}^{s}} < \infty\, , 
\]
\end{lem}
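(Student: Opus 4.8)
The plan is to reduce all three assertions to three elementary structural facts about the decomposition operators. Writing $\kappa_k := \F^{-1}\sigma_k \in \S(\R^n)$, we have first that $\Box_k f = \kappa_k * f$ is convolution with a Schwartz kernel, so that for every $f \in \S'(\R^n)$ the function $\Box_k f$ is smooth, band-limited with $\supp \F(\Box_k f) \subset Q_k$, and given by the pointwise formula $\Box_k f(x) = \langle f, \kappa_k(x - \cdot)\rangle$; second, by Young's inequality together with the uniform bounds on $D^\alpha\sigma_k$ from Subsection \ref{Definitions}, the operators $\Box_k$ are bounded on $L^p$ uniformly in $k$; third, the weight satisfies $e^{|k|^{\frac{1}{s}}} \asymp e^{|j|^{\frac{1}{s}}}$ whenever $|k_i - j_i| \le 2$ for all $i$, which follows from the subadditivity of $t \mapsto t^{\frac{1}{s}}$ for $s \ge 1$. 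Throughout I abbreviate by $\ell_w^q(L^p)$ the Banach space of sequences $(h_k)_{k\in\Z^n} \subset L^p(\R^n)$ with $\big(\sum_k e^{q|k|^{\frac{1}{s}}}\|h_k\|_{L^p}^q\big)^{\frac{1}{q}} < \infty$; by definition the analysis map $A : f \mapsto (\Box_k f)_k$ is an isometry of $\mathcal{GM}_{p,q}^{s}$ into $\ell_w^q(L^p)$, and $q'$ denotes the conjugate exponent of $q$.

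For (i) I would take a Cauchy sequence $(f_m)$ in $\mathcal{GM}_{p,q}^{s}$. Then $(A f_m)_m$ is Cauchy in the Banach space $\ell_w^q(L^p)$, hence converges to some $(g_k)_k$; in particular $\Box_k f_m \to g_k$ in $L^p$ for each $k$, and each $g_k$ inherits the frequency support $\supp \F g_k \subset Q_k$. The candidate limit is $f := \sum_k g_k$. This series converges in $\S'(\R^n)$, because Hölder's inequality gives $\sum_k \|g_k\|_{L^p} \le \|(e^{-|k|^{\frac{1}{s}}})_k\|_{\ell^{q'}}\,\|(g_k)_k\|_{\ell_w^q(L^p)} < \infty$ (the first factor being finite since the weight grows super-polynomially), so $\sum_k \langle g_k, \varphi\rangle$ converges absolutely for every $\varphi \in \S(\R^n)$. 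The same estimate applied to $\Box_k f_m - g_k$, together with $f_m = \sum_k \Box_k f_m$ in $\S'$ (a consequence of $\sum_k\sigma_k\equiv 1$), shows $f_m \to f$ in $\S'(\R^n)$; since each $\Box_k$ is continuous on $\S'$, this gives $\Box_k f = \lim_m \Box_k f_m = g_k$, whence $f \in \mathcal{GM}_{p,q}^{s}$ and $\|f_m - f\|_{\mathcal{GM}_{p,q}^{s}} = \|A f_m - (g_k)_k\|_{\ell_w^q(L^p)} \to 0$. This reconstruction is the step I regard as the main obstacle: turning the coordinatewise $L^p$-limits $g_k$ into a single tempered distribution $f$ with $\Box_k f = g_k$ is exactly what the super-polynomial growth of the weight buys us.

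For (ii) let $\tilde\rho$ be a second admissible window with decomposition $(\tilde\sigma_k)_k$ and operators $\tilde\Box_k$. Since $\supp\tilde\sigma_k \subset Q_k$ and on $Q_k$ only those $\sigma_j$ with $|k_i - j_i| \le 2$ do not vanish, one has $\tilde\Box_k f = \sum_{|k-j|_\infty \le 2} \tilde\Box_k \Box_j f$, a sum of boundedly many terms. Applying the uniform $L^p$-boundedness of $\tilde\Box_k$ and then the weight comparison from the first paragraph gives $e^{|k|^{\frac{1}{s}}}\|\tilde\Box_k f\|_{L^p} \lesssim \sum_{|k-j|_\infty\le 2} e^{|j|^{\frac{1}{s}}}\|\Box_j f\|_{L^p}$, and summing in $k$ (each $j$ occurring in boundedly many neighborhoods) yields $\|f\|_{\mathcal{GM}^{s,\tilde\rho}_{p,q}} \lesssim \|f\|_{\mathcal{GM}^{s,\rho}_{p,q}}$. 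Interchanging the roles of $\rho$ and $\tilde\rho$ gives the reverse inequality, so the norms are equivalent.

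For (iii) the pointwise formula removes the only real difficulty. Since $f_m \rightharpoonup f$ in $\S'(\R^n)$ and $\kappa_k(x-\cdot) \in \S(\R^n)$ for each fixed $x$, we get $\Box_k f_m(x) = \langle f_m, \kappa_k(x-\cdot)\rangle \to \langle f, \kappa_k(x-\cdot)\rangle = \Box_k f(x)$ for every $k$ and every $x \in \R^n$. Thus $\Box_k f_m \to \Box_k f$ pointwise everywhere, and the classical Fatou lemma (applied to $|\Box_k f_m|^p$ for $p<\infty$, and directly for $p=\infty$) yields $\|\Box_k f\|_{L^p} \le \liminf_m \|\Box_k f_m\|_{L^p}$. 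A second application of Fatou, now for the series over $k \in \Z^n$ against the counting measure, gives $\|f\|_{\mathcal{GM}_{p,q}^{s}} \le \liminf_m \|f_m\|_{\mathcal{GM}_{p,q}^{s}} \le \sup_m \|f_m\|_{\mathcal{GM}_{p,q}^{s}} < \infty$ (with the obvious sup-modification when $q=\infty$), so $f \in \mathcal{GM}_{p,q}^{s}$. I note that the naive route via weak-$*$ lower semicontinuity of the $L^p$ norm would stumble at $p=1$, where $L^1$ is not a dual space; the pointwise convergence obtained for free by testing against the fixed Schwartz functions $\kappa_k(x-\cdot)$ sidesteps this entirely.
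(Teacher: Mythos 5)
Your proof of (iii) is essentially the paper's own argument: the authors likewise obtain pointwise convergence $\Box_k f_m(x)\to \Box_k f(x)$ by testing the weak limit against the translated kernel, apply Fatou's lemma, and conclude by monotonicity in the truncation parameter, which is your second Fatou application over $k$. Parts (i) and (ii), which the paper asserts without proof, are correctly supplied by your standard reconstruction and window-overlap arguments; the only point worth flagging is the one you already flag yourself, namely that the weight comparison $e^{|k|^{1/s}}\asymp e^{|j|^{1/s}}$ for $\|k-j\|_\infty\le 2$ uses subadditivity of $t\mapsto t^{1/s}$ and hence requires $s\ge 1$, consistent with the paper's standing restriction to subexponential weights $s>1$ in all substantive results.
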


\begin{proof}
It is enough to comment on a proof of (iii).
We follow \cite{fr}. 
>From assumption, it follows that for all $k\in \Z^n$ and $x\in\R^n$,
\begin{eqnarray*}
\F^{-1} \, [\sigma_k \, \F f_m](x)=  (2 \pi)^{-n/2} \, f_m(x-\cdot)(\sigma_k) \to f(x-\cdot)(\sigma_k)= 
\F^{-1} \, [\sigma_k \, \F f](x) 
\end{eqnarray*}
as $m\to\infty$. Fatou's lemma yields
\begin{eqnarray*}
\sum_{|k| \le N}  \Big(\int_{\R^n} \, |\F^{-1} \, [\sigma_k \, \F f](x)|^p dx\, \Big)^{\frac qp} 
\le \liminf_{m\to\infty} \sum_{|k|\le N} \, \Big(\int_{\R^n} \,  |\F^{-1} \, [\sigma_k \, \F f_m](x)|^p dx \, \Big)^{\frac qp} \, .
\end{eqnarray*}
An obvious monotonicity argument completes the proof.
\end{proof}

Obviously the  spaces $\mathcal{GM}_{p,q}^{s}(\R^n)$ are monotone in $s$ and $q$. But they are also monotone with respect to $p$. 
To show this we recall Nikol'skijs inequality, see, e.g.,  Nikol'skij \cite[3.4]{Ni} or Triebel \cite[1.3.2]{triebel}.

\begin{lem} \label{nikolskij}
Let $1\leq p \leq q \leq\infty$ and $f$ be an integrable function with $\supp \F f(\xi) \subset B(y,r)$, i.e., 
the support of the Fourier transform of $f$ is contained in a ball with radius $r>0$ and center in $y \in \R^n$. Then it holds
		\[ \|f\|_{L^q} \leq C r^{n(\frac{1}{p}-\frac{1}{q})} \|f\|_{L^p} \]
with a constant $C>0$ independent of $r$ and $y$.
\end{lem}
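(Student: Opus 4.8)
The plan is to prove the inequality by two normalizing reductions followed by a single convolution estimate. First I would remove the dependence on the center $y$. Given $f$ with $\supp \F f \subset B(y,r)$, set $g(x) := e^{-\i y\cdot x} f(x)$; a direct computation gives $\F g(\xi) = \F f(\xi + y)$, so $\supp \F g \subset B(0,r)$, while $|g(x)| = |f(x)|$ shows $\|g\|_{L^p} = \|f\|_{L^p}$ for \emph{every} $p$. Hence it suffices to treat the case of a ball centered at the origin, and the center $y$ disappears from the problem entirely.

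Second, I would normalize the radius to $r = 1$ by a dilation. If $\supp \F g \subset B(0,r)$, put $h(x) := g(x/r)$; then $\F h(\xi) = r^{n}\, \F g(r\xi)$, so $\supp \F h \subset B(0,1)$, and the substitution rule yields $\|h\|_{L^p} = r^{n/p}\|g\|_{L^p}$ for each $p$. Thus an inequality $\|h\|_{L^q}\le C\|h\|_{L^p}$ valid for all functions with frequency support in $B(0,1)$ translates, after inserting the two scaling factors $r^{n/q}$ and $r^{n/p}$, into $\|g\|_{L^q}\le C\, r^{n(\frac1p-\frac1q)}\|g\|_{L^p}$, which is precisely the claimed power of $r$ (nonnegative since $p\le q$).

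It remains to establish the case $r=1$, $y=0$, where I would use a band-limited reproducing formula. Choose a fixed $\psi\in\S(\R^n)$ whose Fourier transform is a smooth compactly supported function equal to the constant $(2\pi)^{-n/2}$ on a neighbourhood of $\overline{B(0,1)}$. Since $\F h$ is supported in $B(0,1)$, the convolution theorem gives $\F(\psi * h) = (2\pi)^{n/2}\,\F\psi\,\F h = \F h$, whence $h = \psi * h$. Now Young's convolution inequality with exponents satisfying $1 + \frac1q = \frac1a + \frac1p$, i.e. $\frac1a = 1 - (\frac1p - \frac1q)\in[0,1]$ (here $p\le q$ is exactly what guarantees $a\ge 1$), yields $\|h\|_{L^q} = \|\psi * h\|_{L^q}\le \|\psi\|_{L^a}\|h\|_{L^p}$. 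As $\psi$ is a fixed Schwartz function, $C := \|\psi\|_{L^a}$ is finite and depends only on $n$, $p$, $q$, not on $h$ (nor on the original $r$ and $y$).

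The main obstacle I would expect is bookkeeping rather than anything conceptual: one must verify that the dilation produces exactly the exponent $n(\frac1p - \frac1q)$ and that the constant coming from Young's inequality is genuinely independent of $r$ and $y$. The two reductions are designed to decouple these issues — the center is killed by modulation, which is an $L^p$-isometry for all $p$ \emph{simultaneously}, and the radius is handled entirely by scaling — so that the only nontrivial estimate, Young's inequality applied to $h = \psi * h$, need be carried out once, for the single normalized ball $B(0,1)$.
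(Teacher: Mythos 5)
Your proof is correct and complete: the modulation step kills the center $y$, the dilation produces exactly the exponent $n(\frac1p-\frac1q)$, and the identity $h=\psi*h$ together with Young's inequality (with $\frac1a=1-(\frac1p-\frac1q)\in[0,1]$, which is where $p\le q$ enters) gives the normalized case with a constant depending only on $n,p,q$. The paper offers no proof of its own but simply cites Nikol'skij and Triebel, and your argument is precisely the standard one found in those references, so there is nothing to add.
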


This implies $\|\Box_k f\|_{L^q} \le c \, \|\Box_k f\|_{L^p}$ if $p \le q$ with $c$ independent of $k$ and $f$ which results in the following
corollary.

\begin{cor}\label{einbettung}
Let $0 < s_0 < s$, $p_0 < p $ and $q_0 < q$.
Then the following  embeddings hold and are continuous:
\[
\mathcal{GM}_{p,q}^{s_0}(\R^n) \hookrightarrow \mathcal{GM}_{p,q}^{s}(\R^n)\, , \qquad \mathcal{GM}_{p_0,q}^{s}(\R^n) \hookrightarrow \mathcal{GM}_{p,q}^{s}(\R^n) 
\]
and
\[
 \mathcal{GM}_{p,q_0}^{s}(\R^n) \hookrightarrow \mathcal{GM}_{p,q}^{s}(\R^n)\, ; 
\]
i.e., for all $p,q$, $1\le p,q\le \infty$ we have 
\[
\mathcal{GM}_{1,1}^{s}(\R^n) \hookrightarrow 
\mathcal{GM}_{p,q}^{s}(\R^n) \hookrightarrow  \mathcal{GM}_{\infty,\infty}^{s}(\R^n)\, .
\]
\end{cor}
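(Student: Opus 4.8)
The plan is to treat the three asserted monotonicities one parameter at a time and then to obtain the chained inclusion $\mathcal{GM}_{1,1}^{s} \hookrightarrow \mathcal{GM}_{p,q}^{s} \hookrightarrow \mathcal{GM}_{\infty,\infty}^{s}$ by composing them. It is convenient to read the defining quantity $\|f\|_{\mathcal{GM}_{p,q}^{s}}$ as the $\ell^q$-norm (over $k\in\Z^n$) of the nonnegative sequence $e^{|k|^{1/s}}\|\Box_k f\|_{L^p}$; each of the three embeddings then reduces to a comparison between two such sequences.

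For the monotonicity in $s$ the point is a pointwise comparison of weights. Since $0 < s_0 < s$ gives $\frac{1}{s} < \frac{1}{s_0}$, one has $|k|^{1/s} \le |k|^{1/s_0}$ for every $k \in \Z^n$ (trivially for $k=0$, and because $|k|\ge 1$ otherwise), hence $e^{|k|^{1/s}} \le e^{|k|^{1/s_0}}$. Multiplying by $\|\Box_k f\|_{L^p}^q$, summing over $k$ and taking the $q$-th root yields $\|f\|_{\mathcal{GM}_{p,q}^{s}} \le \|f\|_{\mathcal{GM}_{p,q}^{s_0}}$ at once, with embedding constant one. For the monotonicity in $q$ I would invoke the elementary nesting $\ell^{q_0} \hookrightarrow \ell^q$ valid for $q_0 \le q$, applied to the nonnegative sequence $e^{|k|^{1/s}}\|\Box_k f\|_{L^p}$: since $\|(c_k)_k\|_{\ell^q} \le \|(c_k)_k\|_{\ell^{q_0}}$ whenever $q_0 \le q$, this gives $\|f\|_{\mathcal{GM}_{p,q}^{s}} \le \|f\|_{\mathcal{GM}_{p,q_0}^{s}}$ directly.

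The monotonicity in $p$ is the only place where the geometry of the decomposition enters. Here I would use the consequence of Nikol'skij's inequality recorded immediately above, namely $\|\Box_k f\|_{L^p} \le c\,\|\Box_k f\|_{L^{p_0}}$ for $p_0 \le p$, with $c$ independent of $k$ and $f$. Multiplying this by the fixed weight $e^{|k|^{1/s}}$ and taking the $\ell^q$-norm over $k$ gives $\|f\|_{\mathcal{GM}_{p,q}^{s}} \le c\,\|f\|_{\mathcal{GM}_{p_0,q}^{s}}$. The main, and essentially only, subtlety is the uniformity of $c$ in this step: Lemma \ref{nikolskij} produces a factor $r^{n(1/p_0-1/p)}$, and one must check it is bounded independently of $k$. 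This is exactly where I would emphasize that $\supp \sigma_k \subset Q_k$ is a cube of side length $2$ for every $k$, so that $\F \Box_k f$ is supported in a ball of radius of order $\sqrt{n}$, a fixed number not depending on $k$; consequently the constant $c$ is uniform, as asserted in the text preceding the corollary.

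Finally, the displayed chain follows by composition of the three estimates just established: $\mathcal{GM}_{1,1}^{s} \hookrightarrow \mathcal{GM}_{p,1}^{s} \hookrightarrow \mathcal{GM}_{p,q}^{s}$ using first the $p$- then the $q$-embedding, and similarly $\mathcal{GM}_{p,q}^{s} \hookrightarrow \mathcal{GM}_{\infty,q}^{s} \hookrightarrow \mathcal{GM}_{\infty,\infty}^{s}$, the boundary cases $p=1$, $q=1$, $p=\infty$ or $q=\infty$ reducing to trivial identities. Since all three individual embeddings are continuous with explicit constants, the composed embeddings are continuous as well, which is all that is claimed.
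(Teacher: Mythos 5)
Your proposal is correct and follows essentially the same route as the paper: the monotonicity in $s$ and $q$ via the pointwise weight comparison $e^{|k|^{1/s}}\le e^{|k|^{1/s_0}}$ and the nesting of $\ell^q$-spaces (which the paper dismisses as obvious), and the monotonicity in $p$ via Nikol'skij's inequality applied to $\Box_k f$, whose Fourier support lies in a ball of radius $O(\sqrt n)$ uniformly in $k$ --- exactly the observation the paper records in the sentence preceding the corollary. Your explicit attention to the uniformity of the Nikol'skij constant in $k$ is the one point the paper leaves implicit, and you handle it correctly.
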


Only very smooth functions have a chance to belong to one of the spaces $\mathcal{GM}_{p,q}^{s}(\R^n)$. 

\begin{cor}\label{gevrey}
Let $s>0$ and $1\le p,q\le \infty$.
If $f \in \mathcal{GM}_{p,q}^{s}(\R^n)$, then $f$ is infinitely often differentiable and there exists a constant $C=C(f,n)$ such that
\begin{equation}\label{ws-02}
 |D^\alpha f (x)| \le C^{|\alpha|+1} \, (\alpha !)^{sn}\, , \qquad x \in \R^n\, .
\end{equation}
\end{cor}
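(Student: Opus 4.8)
The plan is to differentiate the frequency-uniform decomposition $f=\sum_{k\in\Z^n}\Box_k f$ term by term and to show that the differentiated series converges in $L^\infty(\R^n)$ with a quantitative Gevrey bound; this gives both $f\in C^\infty(\R^n)$ and \eqref{ws-02} at once. By Corollary \ref{einbettung} we have $\mathcal{GM}_{p,q}^s(\R^n)\hookrightarrow\mathcal{GM}_{\infty,\infty}^s(\R^n)$, so it is enough to assume $f\in\mathcal{GM}_{\infty,\infty}^s$; then $\|\Box_k f\|_{L^\infty}\le\|f\|_{\mathcal{GM}_{\infty,\infty}^s}\,e^{-|k|^{1/s}}$ for every $k\in\Z^n$. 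Everything thus reduces to a uniform bound for $\|D^\alpha\Box_k f\|_{L^\infty}$ that can be summed against this subexponential decay.

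First I would isolate the frequency location by writing $\Box_k f(x)=e^{\i k\cdot x}g_k(x)$ with $g_k(x):=e^{-\i k\cdot x}\Box_k f(x)$, so that $\F g_k$ is supported in the \emph{fixed} cube $Q_0$ and $\|g_k\|_{L^\infty}=\|\Box_k f\|_{L^\infty}$. Fixing once and for all a window $\Psi:=\F^{-1}\widetilde\sigma$ with $\widetilde\sigma\in C_0^\infty(\R^n)$ chosen so that $\Psi\ast g_k=g_k$ for all $k$, one gets $D^\gamma g_k=(D^\gamma\Psi)\ast g_k$ and hence $\|D^\gamma g_k\|_{L^\infty}\le\|D^\gamma\Psi\|_{L^1}\,\|\Box_k f\|_{L^\infty}$. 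The crucial observation is that $D^\gamma\Psi=\F^{-1}[\xi^\gamma\widetilde\sigma]$ and that the \emph{compact support} of $\widetilde\sigma$ forces only geometric growth of the constant: bounding $\|\cdot\|_{L^1}$ by $\|\langle x\rangle^{2(n+1)}\,\cdot\,\|_{L^\infty}$ and transferring the weight to the frequency side via integration by parts yields $\|D^\gamma\Psi\|_{L^1}\le\tilde C^{\,|\gamma|+1}$ with $\tilde C$ independent of $\gamma$. Together with the Leibniz rule $D^\alpha(e^{\i k\cdot x}g_k)=\sum_{\beta\le\alpha}\binom{\alpha}{\beta}k^\beta e^{\i k\cdot x}D^{\alpha-\beta}g_k$ this gives
\[
\|D^\alpha\Box_k f\|_{L^\infty}\le\sum_{\beta\le\alpha}\binom{\alpha}{\beta}\,|k^\beta|\,\tilde C^{\,|\alpha-\beta|+1}\,\|\Box_k f\|_{L^\infty}\, .
\]

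Next I would carry out the $k$-summation. The elementary inequality $|k|^{1/s}\ge\frac1n\sum_{j=1}^n|k_j|^{1/s}$ (valid for all $s>0$ since $|k|\ge\max_j|k_j|$) makes the weight factorize, $e^{-|k|^{1/s}}\le\prod_{j=1}^n e^{-|k_j|^{1/s}/n}$, so that
\[
\sum_{k\in\Z^n}|k^\beta|\,\|\Box_k f\|_{L^\infty}\lesssim\prod_{j=1}^n\sum_{k_j\in\Z}|k_j|^{\beta_j}e^{-|k_j|^{1/s}/n}\, .
\]
Comparing each factor with $\int_0^\infty t^{\beta_j}e^{-t^{1/s}/n}\,dt\asymp\Gamma\!\big(s(\beta_j+1)\big)$ and using the Stirling-type bound $\Gamma(s\beta_j+s)\le C^{\beta_j+1}(\beta_j!)^s$ gives $\sum_k|k^\beta|\,\|\Box_k f\|_{L^\infty}\le A^{|\beta|+1}(\beta!)^s$. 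Inserting this into the estimate of the previous paragraph, using $(\beta!)^s\le(\alpha!)^s$ for $\beta\le\alpha$, and finally the identity $\sum_{\beta\le\alpha}\binom{\alpha}{\beta}\tilde C^{|\alpha-\beta|}A^{|\beta|}=(\tilde C+A)^{|\alpha|}$, I obtain
\[
|D^\alpha f(x)|\le\sum_{k\in\Z^n}\|D^\alpha\Box_k f\|_{L^\infty}\le C^{|\alpha|+1}(\alpha!)^s\le C^{|\alpha|+1}(\alpha!)^{sn}\, ,
\]
the last inequality because $\alpha!\ge1$ and $n\ge1$. Applying this with arbitrary $\alpha$ shows that $\sum_k D^\alpha\Box_k f$ converges absolutely and uniformly; since $\sum_k\sigma_k\equiv1$ gives $f=\sum_k\Box_k f$ in $\S'(\R^n)$, it follows that $f\in C^\infty(\R^n)$ and that term-by-term differentiation is legitimate, completing the argument.

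I expect the main obstacle to be the second step: the whole scheme collapses unless the Bernstein constant $\|D^\gamma\Psi\|_{L^1}$ grows only geometrically in $|\gamma|$, and this gain rests entirely on the compactness of $\supp\widetilde\sigma$ — a generic Schwartz window would produce factorial growth and the Leibniz sum would fail to reassemble into the clean geometric series $(\tilde C+A)^{|\alpha|}$. A secondary, purely technical point is to keep the factorization of the subexponential weight uniform across all $s>0$, which the crude bound $|k|\ge\max_j|k_j|$ settles without any restriction on $s$.
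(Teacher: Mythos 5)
Your argument is correct and shares its skeleton with the paper's proof (reduce to $\mathcal{GM}^s_{\infty,\infty}$ via Corollary \ref{einbettung}, differentiate the decomposition $f=\sum_k \Box_k f$ term by term, bound $\|D^\alpha\Box_k f\|_{L^\infty}$ by a quantity growing like $|k|^{|\alpha|}$ times $\|\Box_k f\|_{L^\infty}$, and sum against the subexponential weight by comparison with a Gamma integral). The two key technical steps are, however, executed quite differently. For the per-block derivative estimate the paper writes $\Box_k(D^\alpha f)=\F^{-1}[\varphi_k(\xi)\xi^\alpha\sigma_k(\xi)\F f(\xi)]$ and invokes a Fourier multiplier lemma with the $W^m_2$-norm of $\varphi_k(\xi)\xi^\alpha$, obtaining $(1+|k|)^{|\alpha|}$ directly; you instead modulate each block to the fixed cube $Q_0$, apply Young's inequality with a single window $\Psi$, and recover the $k$-dependence through the Leibniz rule. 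These are essentially equivalent in strength (your binomial sum $\sum_{\beta\le\alpha}\binom{\alpha}{\beta}|k^\beta|\tilde C^{|\alpha-\beta|}$ reassembles to $\tilde C^{|\alpha|}(1+|k|)^{|\alpha|}$), and your observation that the geometric growth of $\|D^\gamma\Psi\|_{L^1}$ hinges on the compact support of $\widetilde\sigma$ is exactly the point where the paper uses $\sup_{\xi\in\supp\varphi_k}|D^\gamma(\varphi_k(\xi)\xi^\alpha)|\le c(1+|k|)^{|\alpha|}$. The genuinely different — and better — step is the summation over $k$: the paper sums the isotropic quantity $(1+|k|)^{|\alpha|}e^{-|k|^{1/s}}\asymp\Gamma(s(|\alpha|+n))$ and then needs the multinomial theorem to convert $(s(|\alpha|+n))^{s(|\alpha|+n)}$ into $C^{|\alpha|+1}(\alpha^\alpha)^{sn}$, which is where the exponent $sn$ in \eqref{ws-02} originates; your coordinate-wise factorization $e^{-|k|^{1/s}}\le\prod_j e^{-|k_j|^{1/s}/n}$ splits the sum into one-dimensional pieces $\asymp\Gamma(s(\beta_j+1))$ and yields $(\alpha!)^{s}$ directly, avoiding the multinomial gymnastics altogether and giving a strictly sharper conclusion (exponent $s$ instead of $sn$), of which \eqref{ws-02} is then a trivial consequence since $\alpha!\ge 1$.
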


\begin{proof}
Let $\varphi \in C_0^\infty (\R^n)$ be a function such that $\varphi :~ \R^n \to [0,1]$, 
$\varphi (\xi)= 1 $ if $\xi \in \supp \rho$ and  $\supp \varphi \subset \max_{i=1, \ldots \, n} |\xi_i|\le 2$.
We put $\varphi_k(\xi) := \varphi (\xi-k)$, $\xi \in \R^n$, $k \in \Z^n$.
In what follows we work with the distributional derivative, i.e., the assumption $f \in S'(\R^n)$ is sufficient.
Hence, if $m >n/2$, there exists a constant $c_1$ such that
\[
 |\Box_k (D^\alpha f) (x)| = |\F^{-1}[ \varphi_k (\xi) \, \xi^\alpha \, \sigma_k (\xi)\,  \F f (\xi)](x)|\le 
 c_1\, \| \, \varphi_k (\xi) \, \xi^\alpha \|_{W^m_2} \, |\Box_k f (x)|
\]
holds for all $f$ and all $x$, see, e.g., \cite[Prop.~1.7.5]{ST}.
By using
\begin{eqnarray*}
 \| \, \varphi_k (\xi) \, \xi^\alpha \|_{W^m_2} & \le & |\supp \varphi |^{1/2} \, \max_{|\gamma|\le m} \sup_{\xi \in \supp \varphi_k}\,  |D^\gamma (\varphi_k (\xi)\, \xi^\alpha) \, |
\\
& \le & c_2 \, (1+ |k|)^{|\alpha|}
\end{eqnarray*}
with $c_2$ independent of $k$ and $\alpha$ we conclude
\[
 |\Box_k (D^\alpha f) (x)| \le c_3 \, (1+ |k|)^{|\alpha|} \, |\Box_k  f (x)|\, , \qquad x \in \R^n\, .
\]
Hence, for any $N$ we have
\begin{eqnarray}\label{ws-01}
\sum_{|k|<N} |\Box_k (D^\alpha f) (x)| & \le &  c_3 \, \sum_{|k|<N}  (1+ |k|)^{|\alpha|} \, \|\, \Box_k  f (x)\, \|_{L^\infty}
\nonumber
\\
& \le & c_3 \, \Big(\sum_{|k|<N}  (1+ |k|)^{|\alpha|} \,  e^{-|k|^{1/s}} \Big)  \, \|\, f \, \|_{\mathcal{GM}_{\infty,\infty}^{s}}
\nonumber
\\
& \le & c_4  \, \|\, f \, \|_{\mathcal{GM}_{\infty,\infty}^{s}}
\, , \qquad x \in \R^n\, ,
\end{eqnarray}
where $c_4$ does not depend on $f,N$ and $x$.
This implies convergence of \\
$\displaystyle \Big(\sum_{|k|<N} \Box_k (D^\alpha f) \Big)_N$ in $C_{ub}(\R^n)$. Since
\[
\lim_{N \to \infty} \, \sum_{|k|<N} \Box_k (D^\alpha f) = D^\alpha f  \qquad \mbox{in}\quad S'(\R^n)
\]
we conclude $D^\alpha f \in C_{ub}(\R^n)$ and
\[
|D^\alpha f (x)| \le \sum_{k\in \Z^n} |\Box_k (D^\alpha f) (x)| \le c_4\, \| \, f \, \|_{\mathcal{GM}_{\infty,\infty}^{s}}
\, , \qquad x \in \R^n\, .
\]
This proves $f \in C^\infty (\R^n)$. Now we turn back to the estimate \eqref{ws-01}.
Observe
\begin{eqnarray*}
\sum_{|k|<N}  (1+ |k|)^{|\alpha|} \,  e^{-|k|^{1/s}} &\asymp & \int_{\R^n}  (1+ |x|)^{|\alpha|} \,  e^{-|x|^{1/s}}\, dx
\\
& =& c_n \, \int_{0}^\infty   (1+r)^{|\alpha|+n-1} \,  e^{-r^{1/s}}\, dr
\\
&  \asymp &  \int_{0}^\infty   t^{s(|\alpha|+n)-1} \,  e^{-t}\,   dt = \Gamma (s(|\alpha|+n)),
\end{eqnarray*}
where the constants behind $\asymp$ are independent of $\alpha$.
Next we apply\\
 $\Gamma (x)\le x^{x-1}$. This yields
\[
|D^\alpha f (x)|\le c_5 \, (s(|\alpha|+n))^{ (s(|\alpha|+n))-1}\, \| \, f \, \|_{\mathcal{GM}_{\infty,\infty}^{s}}\, .
\]
Recall the multinomial theorem
\[
(\alpha_1 + \ldots + \alpha_n)^k = \sum_{|\beta|=k} \, {k \choose \beta} \, \alpha^\beta
\]
and take into account that
\[
\max_{|\beta| = |\alpha|} \, \alpha^\beta = \alpha ^\alpha 
\]
we obtain with $k = |\alpha|+1$
\begin{eqnarray*}
  (s(|\alpha|+n))^{ (s(|\alpha|+n))} & \le  &   (sn)^{sn}\, (sn)^{|\alpha|}\,  \Big((|\alpha|+1)^{|\alpha|+1}\Big)^{sn} 
\\
& \le & (sn)^{sn}\, (sn)^{|\alpha|}\,  \Big(\gamma^\gamma \,  \sum_{|\beta|=k} \, {k \choose \beta}\Big)^{sn}
\\
& \le & (sn)^{sn}\, (sn)^{|\alpha|}\, n^{(|\alpha|+1)sn} \,  (\gamma^\gamma) ^{sn} \, .  
\end{eqnarray*}
Here $\gamma = \alpha + e^j$, where $e^j= (0, \ldots , 0,1,0, \ldots \, , n)$ for some $j \in \{1,2, \ldots \, n\}$.
In case $\alpha_1 \neq 0$ we find
\begin{eqnarray*}
 (s(|\alpha|+n))^{ (s(|\alpha|+n))} & \le &  (sn)^{sn}\, (sn)^{|\alpha|}\, n^{(|\alpha|+1) sn} \, 
\Big((\alpha_1 + 1)^{\alpha_1 + 1} \, \prod_{j=2}^n \alpha_j^{\alpha_j}\Big)^{sn} 
\\
&\le &  (sn)^{sn}\, (sn)^{|\alpha|}\, (2n)^{(|\alpha|+1) sn} \, \Big(\alpha_1  \, \prod_{j=1}^n \alpha_j^{\alpha_j}\Big)^{sn} 
\\
&\le &  (sn)^{sn}\, (sn)^{|\alpha|}\, (2n)^{(|\alpha|+1) sn} \, |\alpha|^{sn}\,  (\alpha^\alpha)^{sn} 
\, .  
\end{eqnarray*}
In case $\alpha_1 =0$ we have to modify this argument in an obvious way.
Inserting the obtained estimate in our previously found inequality  we have proved
\[
|D^\alpha f (x)|\le c_5 \, (sn)^{sn}\, (sn)^{|\alpha|}\, (2n)^{(|\alpha|+1) sn} \, |\alpha|^{sn}\,  (\alpha^\alpha)^{sn}  \, \| \, f \, \|_{\mathcal{GM}_{\infty,\infty}^{s}}\, .
\]
An application of Stirling's formula yields the claim.
\end{proof}

\begin{rem}
 \rm
(i) Inequality \eqref{ws-02} makes clear that the elements of the spaces $\mathcal{GM}_{p,q}^{s}(\R^n)$ have some 
classical Gevrey regularity, see, e.g., Rodino \cite[Definition 1.4.1]{Ro}.
In fact, we proved $\mathcal{GM}_{p,q}^{s}(\R^n) \subset G^{sn} (\R^n)$.
\\
(ii)
The Corollary  \ref{gevrey} remains true  under the weaker assumption $f \in S'(\R^n)$ and $\| \, f \, \|_{\mathcal{GM}_{\infty,\infty}^{s}} < \infty$.
As a consequence we observe that a replacement of the requirement $f \in L^p(\R^n)$
by $f \in S'(\R^n)$ in Definition  \ref{modExpWeight} does not change the space $\mathcal{GM}_{p,q}^{s}(\R^n)$.
\end{rem}

For $p=q=2$ we can simplify the description of $\mathcal{GM}_{p,q}^{s}(\R^n)$.

\begin{lem}\label{gleich}
 \rm
 A tempered distribution $f$ belongs to $\mathcal{GM}_{2,2}^{s}(\R^n)$ if and only if 
 $e^{|\, \cdot \, |^{1/s}} \, \F f (\, \cdot \, ) \in L^2(\R^n)$.
\end{lem}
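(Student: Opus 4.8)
The plan is to pass to the Fourier side and reduce both conditions to a single weighted $L^2$-integral of $\F f$. By Plancherel's theorem, $\|\Box_k f\|_{L^2} = \|\F^{-1}(\sigma_k \F f)\|_{L^2} = \|\sigma_k \F f\|_{L^2}$, so that
\[
\|f\|_{\mathcal{GM}_{2,2}^{s}}^2 = \sum_{k\in\Z^n} e^{2|k|^{1/s}} \int_{\R^n} \sigma_k(\xi)^2 \, |\F f(\xi)|^2 \, d\xi = \int_{\R^n} W(\xi)\, |\F f(\xi)|^2 \, d\xi,
\]
where $W(\xi) := \sum_{k\in\Z^n} e^{2|k|^{1/s}} \sigma_k(\xi)^2$ and the interchange of sum and integral is legitimate by Tonelli, all terms being nonnegative. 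Since $\| e^{|\cdot|^{1/s}} \F f\|_{L^2}^2 = \int_{\R^n} e^{2|\xi|^{1/s}} |\F f(\xi)|^2\, d\xi$, the lemma follows at once once I establish the two-sided estimate $W(\xi) \asymp e^{2|\xi|^{1/s}}$ with constants independent of $\xi$.

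To prove $W(\xi) \asymp e^{2|\xi|^{1/s}}$ I would combine two ingredients. First, the properties of the decomposition $(\sigma_k)_k$ listed before Definition \ref{defdecomp}: the cubes $Q_k$ have bounded overlap (at most $3^n$ of them meet at any given point) and $\sum_k \sigma_k \equiv 1$. Since $0 \le \sigma_k \le 1$ gives $\sum_k \sigma_k^2 \le \sum_k \sigma_k = 1$, while Cauchy--Schwarz applied to the finitely many nonvanishing terms gives $1 = (\sum_k \sigma_k(\xi))^2 \le 3^n \sum_k \sigma_k(\xi)^2$, we obtain $3^{-n} \le \sum_k \sigma_k(\xi)^2 \le 1$, i.e. $\sum_k \sigma_k^2 \asymp 1$. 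Second, the weight comparison: on $\supp \sigma_k \subset Q_k$ one has $|\xi - k| \le \sqrt n$, hence $\big||\xi| - |k|\big| \le \sqrt n$, and I claim this forces $e^{2|k|^{1/s}} \asymp e^{2|\xi|^{1/s}}$ uniformly in $k$ and in $\xi \in Q_k$. Granting this, I replace $e^{2|k|^{1/s}}$ by a fixed multiple of $e^{2|\xi|^{1/s}}$ termwise and factor it out, getting $W(\xi) \asymp e^{2|\xi|^{1/s}} \sum_k \sigma_k(\xi)^2 \asymp e^{2|\xi|^{1/s}}$.

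The heart of the matter is the uniform weight comparison, i.e. that $\sup_k \sup_{\xi \in Q_k} \big| |k|^{1/s} - |\xi|^{1/s}\big|$ is finite, and this is where the subexponential nature of the weight enters decisively through the exponent $1/s \le 1$. For $0 < r \le 1$ and $a,b \ge 0$ one has the elementary subadditivity estimate $|a^r - b^r| \le |a-b|^r$; applying it with $r = 1/s$ yields $\big||\xi|^{1/s} - |k|^{1/s}\big| \le \big||\xi| - |k|\big|^{1/s} \le (\sqrt n)^{1/s}$, a bound depending only on $n$ and $s$. I expect this estimate to be the only genuinely technical point; everything else is bookkeeping.

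Finally I would record the two directions explicitly. If $e^{|\cdot|^{1/s}} \F f \in L^2$, then, since the weight is $\ge 1$, already $\F f \in L^2$, so $f \in L^2(\R^n) \subset \S'(\R^n)$ is a legitimate tempered distribution, and the upper bound $W \lesssim e^{2|\cdot|^{1/s}}$ gives $f \in \mathcal{GM}_{2,2}^{s}(\R^n)$. Conversely, if $f \in \mathcal{GM}_{2,2}^{s}(\R^n)$, the lower bound $W \gtrsim e^{2|\cdot|^{1/s}}$ forces $\int_{\R^n} e^{2|\xi|^{1/s}}|\F f(\xi)|^2\, d\xi < \infty$, that is $e^{|\cdot|^{1/s}} \F f \in L^2(\R^n)$. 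This closes the equivalence.
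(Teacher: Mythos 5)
Your proof is correct and follows essentially the same route as the paper's: pass to the Fourier side via Plancherel and use the support, partition-of-unity and bounded-overlap properties of $(\sigma_k)_k$ together with the uniform comparison $e^{2|k|^{1/s}}\asymp e^{2|\xi|^{1/s}}$ on $Q_k$ (which rests on the subadditivity $|a^{1/s}-b^{1/s}|\le|a-b|^{1/s}$ for $s\ge 1$). The paper's version is just a terser display of the same computation, leaving the weight comparison and the lower bound on $\sum_k\sigma_k^2$ implicit in the phrase ``properties of our decomposition of unity,'' which you have spelled out correctly.
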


\begin{proof}
 This follows from
 \begin{eqnarray*}
\| \, f \, \|_{\mathcal{GM}_{2,2}^{s}}^2 & = &    \sum_{k\in\Z^n} e^{2 |k|^{\frac{1}{s}}} \|\Box_k f\|_{L^2}^2
\\
&= &  \sum_{k\in\Z^n} e^{ 2|k|^{\frac{1}{s}}} \int |\sigma_k (\xi)\,  \F f (\xi) |^2 d\xi 
\\
& \asymp  &  \sum_{k\in\Z^n} e^{2 |k|^{\frac{1}{s}}} \int_{\|\xi - k\|_\infty\le 1} |\F f (\xi) |^2 d\xi 
\\
& \asymp  &  \sum_{k\in\Z^n}  \int_{\|\xi - k\|_\infty\le 1} e^{2 |\xi|^{\frac{1}{s}}}\,  |  \F f (\xi) |^2 d\xi  \asymp   
 \int_{\R^n} e^{2 |\xi|^{\frac{1}{s}}}\,  |  \F f (\xi) |^2 d\xi\, , 
 \end{eqnarray*}
 where we used the properties of our decomposition of unity.

\end{proof}


\subsection{Multiplication Algebras} \label{multiplicationalgebras}


	In the next step we want to prove an essential property for Gevrey-modulation spaces. Subsequently we always mean algebras 
under pointwise multiplication when speaking of algebras. This property is important in two ways. On the one hand it is needed to 
handle semi-linear problems as (\ref{Gevrey2}) or (\ref{Gevrey4}) with analytic nonlinearity $f(u)$. On the other hand it is 
a useful tool in the proof of the non-analytic superposition result in Section \ref{Gevreysuperposition}. At this point we can 
already mention results of Iwabuchi in \cite{iwabuchi}. However he imposed particular conditions on the integrability parameters. 
Of some importance for our proof will be the following elementary lemma, see \cite{Bou}, \cite{brs}.

\begin{lem} \label{weightEstimate}
Let $ s > 1$ and put $\delta:= 2-2^{1/s}$.
Then 
\[ e^{|k|^{\frac{1}{s}}} \leq e^{|l|^{\frac{1}{s}}} e^{|l-k|^{\frac{1}{s}}} e^{-\delta \min\{|l-k|, |l|\}^{\frac{1}{s}}}, \]
holds for arbitrary $k,l\in \Z^n$.
\end{lem}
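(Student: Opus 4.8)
The plan is to take logarithms and reduce the claim to the scalar inequality
\[
|k|^{1/s} \le |l|^{1/s} + |l-k|^{1/s} - \delta\, \min\{|l-k|,|l|\}^{1/s}\, ,
\]
which is exactly what the asserted bound becomes after applying $\log$ to both sides. Writing $a:=|l|$ and $b:=|l-k|$ and using the triangle inequality $|k| = |l-(l-k)| \le |l|+|l-k| = a+b$ together with the monotonicity of $t\mapsto t^{1/s}$ on $[0,\infty)$, it suffices to prove
\[
(a+b)^{1/s} \le a^{1/s}+b^{1/s} - \delta\, \min\{a,b\}^{1/s}
\]
for all $a,b\ge 0$. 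I abbreviate $\theta:=1/s\in(0,1)$, so that $\delta = 2-2^\theta \in (0,1)$.

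Next I would exploit symmetry and homogeneity. Both sides are symmetric in $a,b$ (the term $\min\{a,b\}$ is symmetric), so I may assume $0\le a\le b$, whence $\min\{a,b\}=a$. The case $b=0$ forces $a=0$ and is trivial; for $b>0$ I divide through by $b^\theta$ and set $x:=a/b\in[0,1]$. Since $a^\theta - \delta a^\theta = (2^\theta-1)a^\theta$, the inequality becomes
\[
(1+x)^\theta \le (2^\theta-1)\,x^\theta + 1\, , \qquad x\in[0,1]\, .
\]

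Finally I would prove this one-variable inequality by elementary calculus. Put $g(x):=(2^\theta-1)x^\theta + 1 - (1+x)^\theta$. A direct computation gives $g(0)=0$ and $g(1)=(2^\theta-1)+1-2^\theta = 0$, so equality holds at both endpoints; the endpoint $x=1$, i.e. $a=b$, is precisely where the value $\delta=2-2^\theta$ is forced to be sharp. Differentiating,
\[
g'(x) = \theta\Big[(2^\theta-1)\,x^{\theta-1} - (1+x)^{\theta-1}\Big]\, ,
\]
so the sign of $g'(x)$ agrees with that of $(2^\theta-1) - \big((1+x)/x\big)^{\theta-1}$. As $x$ runs over $(0,1]$ the quantity $\big((1+x)/x\big)^{\theta-1}$ increases strictly from $0$ to $2^{\theta-1}$, and since $0<2^\theta-1<2^{\theta-1}$ (the latter because $\theta<1$), there is a unique $x_0\in(0,1)$ with $g'(x_0)=0$, with $g'>0$ on $(0,x_0)$ and $g'<0$ on $(x_0,1)$. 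Thus $g$ increases then decreases on $[0,1]$, and together with $g(0)=g(1)=0$ this yields $g\ge 0$ on $[0,1]$, which completes the proof.

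The main obstacle is this last step: once the problem is reduced to $(1+x)^\theta \le (2^\theta-1)x^\theta+1$, one must place the extra $\min$-term correctly and then verify that the auxiliary function $g$ has exactly one interior critical point, so that its nonnegativity follows purely from its boundary values. Everything preceding it is only the triangle inequality and monotonicity, while the choice $\delta=2-2^\theta$ is pinned down by the requirement that equality hold at $a=b$.
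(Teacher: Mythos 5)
Your proof is correct. Note that the paper itself gives no proof of this lemma; it only points to \cite{Bou} and \cite{brs}, so there is no in-paper argument to compare against, and your write-up supplies a complete, self-contained verification. The reduction (logarithms, triangle inequality, symmetry, homogeneity) is the standard one, and your one-variable analysis of $g(x)=(2^\theta-1)x^\theta+1-(1+x)^\theta$ is sound: the sign change of $g'$ at a unique interior point together with $g(0)=g(1)=0$ does force $g\ge 0$ on $[0,1]$. The only remark worth making is that the final calculus step can be replaced by a one-line concavity argument, which is essentially what the cited sources do: since $t\mapsto t^\theta$ is concave, the increment $t\mapsto (t+a)^\theta-t^\theta$ is non-increasing in $t$, so for $b\ge a\ge 0$ one gets $(a+b)^\theta-b^\theta\le (2a)^\theta-a^\theta=(2^\theta-1)a^\theta$, which is exactly the target inequality with $\delta=2-2^\theta$ and also makes the sharpness at $a=b$ transparent. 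Your route costs a little more computation but buys the explicit description of where $g$ attains its maximum; both are valid.
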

	
After these preparations we can state the main result of this section.

\begin{thm} \label{algebra}
Let $1\leq p_1,p_2,q \leq \infty$ and $s>1$. 
Define $p$ by $\frac{1}{p}:= \frac{1}{p_1}+\frac{1}{p_2}$ and assume that $f\in\mathcal{GM}_{p_1,q}^{s}(\R^n)$ and  $g\in\mathcal{GM}_{p_2,q}^{s}(\R^n)$. 
Then $f\cdot g \in\mathcal{GM}_{p,q}^{s}(\R^n)$ and it holds
		\[ \|f \cdot g\|_{\mathcal{GM}_{p,q}^{s}} \leq C \|f\|_{\mathcal{GM}_{p_1,q}^{s}} \|g\|_{\mathcal{GM}_{p_2,q}^{s}} \]
		with a positive constant $C$ which only depends on the choice of the frequency-uniform 
decomposition, the dimension $n$ and the parameters $s$ and $q$. 		
\end{thm}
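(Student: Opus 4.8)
\emph{Plan of proof.} The strategy is the standard almost-orthogonality (paraproduct) argument adapted to the subexponential weight, the decisive point being that Lemma~\ref{weightEstimate} supplies an exponentially decaying gain which upgrades a convolution of two $\ell^q$-sequences back into an $\ell^q$-sequence. First I would write $f=\sum_{l\in\Z^n}\Box_l f$ and $g=\sum_{m\in\Z^n}\Box_m g$ (with convergence in $\S'(\R^n)$), so that $fg=\sum_{l,m}(\Box_l f)(\Box_m g)$ and hence
\[
\Box_k(fg)=\sum_{l,m\in\Z^n}\Box_k\big[(\Box_l f)(\Box_m g)\big].
\]
Since $\F[\Box_l f]$ is supported in $Q_l$ and $\F[\Box_m g]$ in $Q_m$, the product $(\Box_l f)(\Box_m g)$ has Fourier support in $Q_l+Q_m\subset\{\xi:\ |\xi_i-(l_i+m_i)|\le 2\}$; as $\supp\sigma_k\subset Q_k$, the term $\Box_k[(\Box_l f)(\Box_m g)]$ vanishes unless $|k-(l+m)|_\infty\le 3$, so for each $k$ only finitely many $(l,m)$ contribute. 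Using the uniform boundedness of $\Box_k$ on $L^p$ (for band-limited functions, valid since the $\sigma_k$ have derivatives bounded uniformly in $k$) together with Hölder's inequality under $\frac1p=\frac1{p_1}+\frac1{p_2}$, I obtain
\[
\|\Box_k(fg)\|_{L^p}\lesssim\sum_{|k-l-m|_\infty\le 3}\|\Box_l f\|_{L^{p_1}}\,\|\Box_m g\|_{L^{p_2}}.
\]

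Next I would insert the weight. Writing $a_l:=e^{|l|^{1/s}}\|\Box_l f\|_{L^{p_1}}$ and $b_m:=e^{|m|^{1/s}}\|\Box_m g\|_{L^{p_2}}$, so that $\|f\|_{\mathcal{GM}_{p_1,q}^{s}}=\|(a_l)\|_{\ell^q}$ and $\|g\|_{\mathcal{GM}_{p_2,q}^{s}}=\|(b_m)\|_{\ell^q}$, I apply Lemma~\ref{weightEstimate} with its index $l$ taken to be our $l$. On the relevant set $|l-k|\le|m|+3\sqrt n$, so the subadditivity $(a+b)^{1/s}\le a^{1/s}+b^{1/s}$ (valid because $s>1$) gives $e^{|l-k|^{1/s}}\lesssim e^{|m|^{1/s}}$ and, by a short estimate of the same type, $\min\{|l-k|,|l|\}^{1/s}\ge\min\{|l|,|m|\}^{1/s}-C$. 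Consequently $e^{|k|^{1/s}}\lesssim e^{|l|^{1/s}}\,e^{|m|^{1/s}}\,e^{-\delta\min\{|l|,|m|\}^{1/s}}$, and therefore
\[
c_k:=e^{|k|^{1/s}}\|\Box_k(fg)\|_{L^p}\lesssim\sum_{|k-l-m|_\infty\le 3}a_l\,b_m\,e^{-\delta\min\{|l|,|m|\}^{1/s}}.
\]

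The remaining, and main, step is the sequence estimate $\|(c_k)\|_{\ell^q}\lesssim\|(a_l)\|_{\ell^q}\|(b_m)\|_{\ell^q}$. The difficulty is exactly that, without the exponential factor, the right-hand side is a convolution of two $\ell^q$-sequences, which in general does not lie in $\ell^q$; the gain $e^{-\delta\min\{|l|,|m|\}^{1/s}}$ is what removes this obstruction. I would split the sum according to $|l|\le|m|$ and $|l|>|m|$. On $\{|l|\le|m|\}$ the gain becomes $e^{-\delta|l|^{1/s}}$; dropping the now-superfluous constraint $|l|\le|m|$ and performing the finite $m$-sum over the cube $|k-l-m|_\infty\le 3$ exhibits the bound as a discrete convolution $(u*B)_k$ with $u_l:=a_l\,e^{-\delta|l|^{1/s}}$ and $B_j:=\sum_{|r|_\infty\le 3}b_{j-r}$. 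Young's inequality $\ell^1*\ell^q\hookrightarrow\ell^q$ then yields $\|c\|_{\ell^q}\le\|u\|_{\ell^1}\|B\|_{\ell^q}$, while Hölder gives $\|u\|_{\ell^1}\le\|(a_l)\|_{\ell^q}\,\big\|(e^{-\delta|l|^{1/s}})\big\|_{\ell^{q'}}\lesssim\|(a_l)\|_{\ell^q}$, the last factor being finite because $e^{-\delta|\cdot|^{1/s}}\in\ell^{r}(\Z^n)$ for every $1\le r\le\infty$, and $\|B\|_{\ell^q}\le 7^n\|(b_m)\|_{\ell^q}$. The region $|l|>|m|$ is treated symmetrically, with the roles of $(a_l)$ and $(b_m)$ exchanged. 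Summing the two contributions gives the asserted inequality with a constant $C$ depending only on $n,s,q$ and the chosen decomposition, the endpoint cases $q=1$ and $q=\infty$ being covered by the obvious modifications.
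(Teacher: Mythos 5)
Your argument is correct and follows essentially the same route as the paper: the frequency-support almost-orthogonality reducing $\Box_k(fg)$ to the cube $\|k-l-m\|_\infty\lesssim 1$, H\"older for the product, Lemma~\ref{weightEstimate} for the weights, and the splitting according to $\min\{|l|,|m|\}$; your final step (absorbing the exponential gain into an $\ell^1$ factor and applying Young's convolution inequality) is merely a reorganization of the paper's H\"older-plus-interchange-of-summation computation and yields the same constant structure. The only point you pass over is the justification that $fg$ actually equals the double series $\sum_{l,m}(\Box_l f)(\Box_m g)$ in $\S'(\R^n)$ — the paper settles this by first showing absolute convergence of the double series in $C_{ub}(\R^n)$ via the embedding into $\mathcal{GM}^{s}_{\infty,\infty}(\R^n)$ and then invoking the Fatou property of Lemma~\ref{basic}(iii) to pass from the uniform bound on partial sums to the bound for $fg$ itself.
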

	
\begin{proof}
Later on we shall use the same strategy of proof as below in slightly different situations.
For this reason and later use we shall take care of all constants showing up in our estimates below.
\\
We know that $\supp \sigma_k \subset Q_k := \{\xi \in\R^n : -1\leq \xi_i-k_i\leq 1, \, i=1,\ldots,n\}$. 
Further on we introduce the notations $f_j (x)= \F^{-1} (\sigma_j \F f)(x)$ and $g_l (x)= \F^{-1} (\sigma_l \F g)(x)$ 
for $j,l\in\Z^n$. At least formally we have the following representation of the product $f\cdot g$ as
		\[ f\cdot g = \sum_{j,l\in Z^n} f_j \cdot g_l. \]
H\"older's inequality yields
\begin{eqnarray*}
\Big|\sum_{j,l\in Z^n} f_j \cdot g_l\, \Big| & \le & \Big(\sum_{j\in Z^n} \|\, f_j\, \|_{L^\infty}^2\Big)^{1/2} 
\Big(\sum_{l \in \Z^n} \| \, g_l\, \|_{L^\infty}^2 \Big)^{1/2} \\
	& \le & C \, \|\,f\, \|_{\mathcal{GM}_{\infty,\infty}^{s}} \|\,g\, \|_{\mathcal{GM}_{\infty,\infty}^{s}}
\end{eqnarray*}
for any $s>0$ and with a constant $C$ independent of $f$ and $g$. This shows convergence of 
$\sum_{j,l\in \Z^n} f_j \cdot g_l$ in $C_{ub}(\R^n)$, hence in $S'(\R^n)$. In view of Lemma \ref{basic}(iii)
it will be sufficient to prove that the sequence $(\sum_{|k|, |l| < N} f_j\, g_l)_N$ is uniformly bounded in 
$\mathcal{GM}_{p,q}^{s}(\R^n)$.\\
Determining the Fourier support of $f_j \cdot g_l$ we see that
\begin{eqnarray*}  \supp \F (f_j g_l)
& = & \supp (\F f_j \ast \F g_l) \\  & \subset& \{\xi\in \R^n :  j_i+l_i-2 \leq \xi_i \leq j_i+l_i+2, \, i=1,\ldots,n\}. 
\end{eqnarray*}
Hence, the term $\F^{-1} (\sigma_k \F (f_j \cdot g_l))$  vanishes if $\|k - (j+l)\|_\infty \ge  3$. So we obtain

\begin{eqnarray*}
\sigma_k \F (f\cdot g) & = & \sigma_k \F \Big(\sum_{j,l\in\Z^n} f_j\cdot g_l \Big) 
 =   \sigma_k \F \Big(\sum_{\substack{j,l \in \Z^n, \\ k_i-3<j_i+l_i < k_i +3, \\ 
i=1,\ldots, n}} f_j g_l \Big)  \\
& \stackrel{[r=j+l]}{=} & \sum_{\substack{r \in \Z^n, \\ k_i-3<r_i < k_i +3, 
\\ i=1,\ldots, n}}  \sum_{l\in\Z^n}\, 
\sigma_k \F \big( f_{r-l} g_l \big) \, .
\end{eqnarray*}
Hence
\begin{eqnarray*}
\left\|\F^{-1} \big(\sigma_k \F (f\cdot g)\big) \right\|_{L^p} 
& \leq & \sum_{\substack{r \in \Z^n, \\ k_i-3<r_i < k_i +3, \\ i=1,\ldots, n}} \sum_{l\in\Z^n} \| \F^{-1} \big( \sigma_k \F ( f_{r-l} g_l) 
\big)\|_{L^p} \\
& \stackrel{[t=r-k]}{=} & \sum_{\substack{t \in \Z^n, \\ -3<t_i < 3, \\ i=1,\ldots, n}} \sum_{l\in\Z^n} \| \F^{-1} 
\big( \sigma_k \F ( f_{t-(l-k)} g_l) \big)\|_{L^p}.
\end{eqnarray*}
These preparations yield the following norm estimates
\begin{eqnarray*}
 \Big( \sum_{k\in\Z^n} e^{|k|^{\frac{1}{s}}q} && \hspace{-0.7cm} \|\F^{-1} \big(\sigma_k \F (f\cdot g) \big)\|_{L^p}^q \Big)^{\frac{1}{q}} 
\\
& \leq & \Bigg( \sum_{k\in\Z^n} e^{|k|^{\frac{1}{s}}q} \Bigg[ \sum_{\substack{t \in \Z^n, \\ -3<t_i < 3, \\ i=1,\ldots, n}}
 \sum_{l\in\Z^n} \| \F^{-1} \big(\sigma_k \F ( f_{t-(l-k)} g_l) \big)\|_{L^p} \Bigg]^q \Bigg)^{\frac{1}{q}} \\
& \leq & \sum_{\substack{t \in \Z^n, \\ -3<t_i < 3, \\ i=1,\ldots, n}} \left( \sum_{k\in\Z^n} e^{|k|^{\frac{1}{s}}q} 
\left[ \sum_{l\in\Z^n} \| \F^{-1} \big(\sigma_k \F ( f_{t-(l-k)} g_l) \big)\|_{L^p} \right]^q \right)^{\frac{1}{q}} \, .
\end{eqnarray*}
Observe
\begin{eqnarray*}
 \| \F^{-1} \big(\sigma_k \F ( f_{t-(l-k)} g_l) \big)\|_{L^p} & = & (2\pi)^{-n/2} \| (\F^{-1} \sigma_k) *  (f_{t-(l-k)} g_l) \, \|_{L^p}
 \\
 &\le & (2\pi)^{-n/2} \| \F^{-1} \sigma_k \, \|_{L^1} \, \|\,  f_{t-(l-k)} g_l \, \|_{L^p}
\\
& =  & (2\pi)^{-n/2} \| \F^{-1} \sigma_0 \, \|_{L^1} \, \|\,  f_{t-(l-k)} g_l \, \|_{L^p} \, ,
 \end{eqnarray*}
where we used Young's inequality. We put $c_1:= (2\pi)^{-n/2} \| \F^{-1} \sigma_0 \, \|_{L^1}$. This implies
\begin{eqnarray*}
\Big( \sum_{k\in\Z^n} e^{|k|^{\frac{1}{s}}q} && \hspace{-0.7cm} \|\F^{-1} \big(\sigma_k \F (f\cdot g) \big)\|_{L^p}^q \Big)^{\frac{1}{q}} 
\\
& \leq & c_1\,  \sum_{\substack{t \in \Z^n, \\ 
-3<t_i < 3, \\ i=1,\ldots, n}} \left( \sum_{k\in\Z^n} e^{|k|^{\frac{1}{s}}q} \left[ \sum_{l\in\Z^n} 
\|f_{t-(l-k)} g_l\|_{L^p} \right]^q \right)^{\frac{1}{q}}\, .
\end{eqnarray*}
We continue by using H\"older's inequality to get
\begin{eqnarray}\label{ws-03}
 \Big( \sum_{k\in\Z^n} e^{|k|^{\frac{1}{s}}q} && \hspace{-0.7cm} \|\F^{-1} \big(\sigma_k \F (f\cdot g) \big)\|_{L^p}^q \Big)^{\frac{1}{q}} 
\nonumber
 \\
& \leq & c_2 \, \max_{\substack{t \in \Z^n, \\ -3<t_i < 3, \\ i=1,\ldots, n}} \left( \sum_{k\in\Z^n} e^{|k|^{\frac{1}{s}}q} 
\left[ \sum_{l\in\Z^n} \|f_{t-(l-k)}\|_{L^{p_1}} \|g_l\|_{L^{p_2}} \right]^q \right)^{\frac{1}{q}} \hspace{1cm}
\end{eqnarray}
with $c_2:= c_1 \, 5^n$.  Lemma \ref{weightEstimate} yields

\begin{eqnarray*}
 e^{|k|^{\frac{1}{s}}} 
\Big[ \sum_{l\in\Z^n} && \hspace{-0.7cm} \|f_{t-(l-k)}\|_{L^{p_1}}\,  \|g_l\|_{L^{p_2}} \Big] 
\\
& \leq &  \sum_{\substack{l\in\Z^n, \\
 |l|\leq |l-k|}} e^{|l-k|^{\frac{1}{s}}} \|f_{t-(l-k)}\|_{L^{p_1}} e^{|l|^{\frac{1}{s}}} \|g_l\|_{L^{p_2}} e^{-\delta|l|^{\frac{1}{s}}} 
\\
& & \qquad + \sum_{\substack{l\in\Z^n, \\ |l-k|\leq |l|}} e^{|l-k|^{\frac{1}{s}}} \|f_{t-(l-k)}\|_{L^{p_1}} e^{|l|^{\frac{1}{s}}} 
\|g_l\|_{L^{p_2}} e^{-\delta|l-k|^{\frac{1}{s}}}  \,.
\end{eqnarray*}
Both parts of this right-hand side will be  estimated separately. We put 
\begin{eqnarray*}
 S_{1,t,k} & := &  \sum_{\substack{l\in\Z^n, \\
 |l|\leq |l-k|}} e^{|l-k|^{\frac{1}{s}}} \|f_{t-(l-k)}\|_{L^{p_1}} e^{|l|^{\frac{1}{s}}} \|g_l\|_{L^{p_2}} e^{-\delta|l|^{\frac{1}{s}}}, 
\\
S_{2,t,k} & := & \sum_{\substack{l\in\Z^n, \\ |l-k|\leq |l|}} e^{|l-k|^{\frac{1}{s}}} \|f_{t-(l-k)}\|_{L^{p_1}} e^{|l|^{\frac{1}{s}}} 
\|g_l\|_{L^{p_2}} e^{-\delta|l-k|^{\frac{1}{s}}}  \, .
\end{eqnarray*}
With $\frac{1}{q}+\frac{1}{q'}=1$  we find
\begin{eqnarray*}
S_{1,t,k} & \stackrel{[j=l-k]}{=} &  \sum_{\substack{j\in\Z^n, \\ |j+k|\leq |j|}} e^{|j|^{\frac{1}{s}}} \|f_{t-j}\|_{L^{p_1}} 
e^{|j+k|^{\frac{1}{s}}} \|g_{j+k}\|_{L^{p_2}} e^{-\delta|j+k|^{\frac{1}{s}}} 
\\
& \leq &   \Big( \sum_{\substack{j\in\Z^n, \\ |j+k|\leq |j|}} \Big| e^{|j|^{\frac{1}{s}}} 
\|f_{t-j}\|_{L^{p_1}} e^{|j+k|^{\frac{1}{s}}} \|g_{j+k}\|_{L^{p_2}} \Big|^q \Big)^{\frac{1}{q}} \\
			& & \qquad \qquad \qquad \times \quad  \Big( \sum_{\substack{j\in\Z^n, \\ 
|j+k|\leq |j|}} \Big| e^{-\delta|j+k|^{\frac{1}{s}}} \Big|^{q'} \Big)^{\frac{1}{q'}} \, .
\end{eqnarray*}
Since
\begin{equation}\label{ws-25}
\Big( \sum_{\substack{j\in\Z^n, \\ 
|j+k|\leq |j|}} \Big| e^{-\delta|j+k|^{\frac{1}{s}}} \Big|^{q'} \Big)^{\frac{1}{q'}} \le 
\Big( \sum_{m \in\Z^n}  e^{-\delta |m|^{\frac{1}{s}} q'} \Big)^{\frac{1}{q'}} =: c_3
\end{equation}
we conclude
\begin{eqnarray*}
\Big(\sum_{k \in \Z^n} S_{1,t,k}^q\Big)^{1/q} & \le  & c_3\,  \Bigg( \sum_{k\in\Z^n} 
\sum_{\substack{j\in\Z^n, \\ |j+k|\leq |j|}} e^{|j|^{\frac{1}{s}}q} \|f_{t-j}\|_{L^{p_1}}^q 
e^{|j+k|^{\frac{1}{s}}q} \|g_{j+k}\|_{L^{p_2}}^q \Bigg)^{1/q}
\\
& \leq &  c_3 \, \Bigg( \sum_{j\in\Z^n} 
e^{|j|^{\frac{1}{s}}q} \|f_{t-j}\|_{L^{p_1}}^q \sum_{k\in\Z^n} e^{|j+k|^{\frac{1}{s}}q} 
\|g_{j+k}\|_{L^{p_2}}^q \Bigg)^{\frac{1}{q}} 
\, .
\end{eqnarray*}
Because of $|j|^{1/s} \le |j-t|^{1/s} + |t|^{1/s}$ we know
\begin{equation}\label{ws-27} 
\max_{\substack{t \in \Z^n, \\ -3<t_i < 3, \\ i=1,\ldots, n}} \sup_{j\in\Z^n} 
e^{|j|^{\frac{1}{s}}-|t-j|^{\frac{1}{s}}} \leq \max_{\substack{t \in \Z^n, \\ -3<t_i < 3, \\ i=1,\ldots, n}} 
e^{|t|^{\frac{1}{s}}} \leq e^{(2\sqrt{n})^{1/s}} =: c_4 < \infty \, .
\end{equation}
This implies 
\[
\Big(\sum_{k \in \Z^n} S_{1,t,k}^q\Big)^{1/q} \le c_3 \, c_4 \, \|g\|_{\mathcal{GM}^s_{p_2,q}} \|f\|_{\mathcal{GM}^s_{p_1,q}},
\]
where $c_3$, $c_4$ are independent of $f,g$ and $t$.
For the second sum  the estimate
\[
\Big(\sum_{k \in \Z^n} S_{2,t,k}^q\Big)^{1/q} \le c_5\,  \|g\|_{\mathcal{GM}^s_{p_2,q}} \|f\|_{\mathcal{GM}^s_{p_1,q}} 
\]
follows by analogous computations. 
Inserting these estimates into \eqref{ws-03} the claim follows in case $\max(p,q)<\infty$.
Remark that all computations can  be done also by taking the $l^\infty$- and $L^\infty$-norm, respectively. 
The proof is complete.
\end{proof}

\begin{rem}
 \rm
(i) Some basic ideas of the above proof are taken over from Bourdaud \cite{Bou}, see also \cite{brs}.
\\
(ii) Also Wang, Lifeng, Boling \cite{wangExp} considered modulation spaces with an exponential weight. More exactly, they
investigated the scale $E^\lambda_{p,q} (\R^n)$, defined as follows.
A tempered distribution $f$ belongs to $E^\lambda_{p,q} (\R^n)$ if 
\[
\|\, f \, \|_{E^\lambda_{p,q}} :=  \Big(\sum_{k \in \Z^n} 2^{\lambda |k|q}\, \|\, \Box_k f\, \|_{L^p}^q\Big)^{1/q} < \infty\, .
\]
For this scale they proved
\begin{equation}\label{ws-04} 
 \|\, f \cdot g\, \|_{E^{\lambda}_{p,q}} \leq c\,  \| f \|_{E_{p_1,\min(1,q)}^{\lambda}} \, \|g\|_{E^{\lambda}_{p_2,\min(1,q)}} \, ,
 \end{equation}
if $\lambda \ge 0$, $0 < p\le p_1,p_2 \le \infty$, $0 < q \le \infty$, and $\frac{1}{p}= \frac{1}{p_1}+\frac{1}{p_2}$.
Let us mention that this is a result parallel to ours.
In case $\lambda = (\log 2)^{-1}$ the spaces $E^\lambda_{p,q} (\R^n)$ coincide with $\mathcal{GM}_{p,q}^{1}(\R^n)$.
In addition, if $\lambda >0$ then we always have
\[
E^\lambda_{p,q} (\R^n) \hookrightarrow \mathcal{GM}_{p,q}^{s}(\R^n)\, , \qquad s>1\, .
\] 
This makes clear that \eqref{ws-04} represents a borderline case with respect to Theorem \ref{algebra}.
\end{rem}

\begin{cor}\label{algebra2}
Let $1\leq p, q \leq \infty$ and $s>1$. 
Then the  Gevrey-modulation space $\mathcal{GM}_{p,q}^{s}(\R^n)$ is an algebra 
under pointwise multiplication.
\end{cor}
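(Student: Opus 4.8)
The plan is to obtain Corollary~\ref{algebra2} as an immediate consequence of Theorem~\ref{algebra}, once the integrability exponents are chosen correctly. What has to be shown is that there is a constant $C>0$ with
\[
\|fg\|_{\mathcal{GM}_{p,q}^{s}} \le C\,\|f\|_{\mathcal{GM}_{p,q}^{s}}\,\|g\|_{\mathcal{GM}_{p,q}^{s}}
\]
for all $f,g\in\mathcal{GM}_{p,q}^{s}(\R^n)$, i.e.\ input and output must live in the same space $\mathcal{GM}_{p,q}^{s}(\R^n)$. The naive substitution $p_1=p_2=p$ in Theorem~\ref{algebra} is of no use here: the relation $\tfrac1{p_1}+\tfrac1{p_2}=\tfrac2p$ forces the product exponent to be $p/2$, which for finite $p$ is strictly smaller than $p$, so the product would only be controlled in $\mathcal{GM}_{p/2,q}^{s}(\R^n)$.

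Instead I would first record the embedding $\mathcal{GM}_{p,q}^{s}(\R^n)\hookrightarrow\mathcal{GM}_{\infty,q}^{s}(\R^n)$, which is contained in Corollary~\ref{einbettung} (the second embedding there, with $q$ fixed and target exponent $\infty$; for $p=\infty$ it is the identity). Given $f,g\in\mathcal{GM}_{p,q}^{s}(\R^n)$ I keep $f$ as an element of $\mathcal{GM}_{p,q}^{s}(\R^n)$ and view $g$ as an element of $\mathcal{GM}_{\infty,q}^{s}(\R^n)$. Then I apply Theorem~\ref{algebra} with $p_1:=p$ and $p_2:=\infty$; these satisfy $1\le p_1,p_2,q\le\infty$, and $\tfrac1{p_1}+\tfrac1{p_2}=\tfrac1p+0=\tfrac1p$, so the product exponent produced by the theorem is exactly $p$. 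This gives $fg\in\mathcal{GM}_{p,q}^{s}(\R^n)$ together with
\[
\|fg\|_{\mathcal{GM}_{p,q}^{s}} \le C\,\|f\|_{\mathcal{GM}_{p,q}^{s}}\,\|g\|_{\mathcal{GM}_{\infty,q}^{s}} \le C'\,\|f\|_{\mathcal{GM}_{p,q}^{s}}\,\|g\|_{\mathcal{GM}_{p,q}^{s}},
\]
the last step being the continuity of the embedding. This is precisely the desired algebra inequality.

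There is no substantial obstacle in this argument; the entire content sits in Theorem~\ref{algebra}, and the only point requiring a moment's care is the choice of exponents $(p,\infty)$ that returns the product to the original space. A fully symmetric variant is equally available: choosing $p_1=p_2=2p$ gives $\tfrac1{p_1}+\tfrac1{p_2}=\tfrac1p$, and since $p\le 2p$ the embedding $\mathcal{GM}_{p,q}^{s}(\R^n)\hookrightarrow\mathcal{GM}_{2p,q}^{s}(\R^n)$ of Corollary~\ref{einbettung} places both factors in $\mathcal{GM}_{2p,q}^{s}(\R^n)$ before Theorem~\ref{algebra} is applied, yielding the same conclusion.
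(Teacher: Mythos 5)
Your proposal is correct and follows essentially the same route as the paper: Theorem \ref{algebra} combined with the monotonicity embedding of Corollary \ref{einbettung}. The paper takes exactly your ``fully symmetric variant'' $p_1=p_2=2p$; your primary choice $(p_1,p_2)=(p,\infty)$ is an equally valid selection of exponents and changes nothing of substance.
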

	
\begin{proof}
 As a consequence of Theorem \ref{algebra} and Corollary \ref{einbettung}  we obtain
 \begin{eqnarray}\label{ws-07}
 \|f \cdot g\|_{\mathcal{GM}_{p,q}^{s}} & \leq &  C \, \|f\|_{\mathcal{GM}_{2p,q}^{s}} \, \|g\|_{\mathcal{GM}_{2p,q}^{s}} 
\nonumber 
\\
& \leq &  C_1 \, \|f\|_{\mathcal{GM}_{p,q}^{s}} \, \|g\|_{\mathcal{GM}_{p,q}^{s}}
\end{eqnarray}
with $C_1$ independent of $f$ and $g$.
Hence the claim follows.
 \end{proof}

\begin{rem}
\rm
(i) This time the constant $C_1$ depends on the window $\rho$, $n,q,s$ and $p$.
\\
(ii) Concerning the weighted modulation spaces $M_{p,q}^s(\R^n)$ there are several contributions to the algebra problem. 
We refer to  Feichtinger \cite{feichtinger}, Iwabuchi \cite{iwabuchi} and Sugimoto et all \cite{Sugi}.
\\
(iii) In view of Lemma \ref{gleich}  Corollary \ref{algebra2} extends earlier results, obtained in \cite{brs} for $p=q=2$, to the general case. \\
(iv) The restriction to values of $s>1$ is not a technical one.
In \cite{brs} the authors have shown that $\mathcal{GM}_{2,2}^{1} (\R^n)$ 
is not an algebra with respect to pointwise multiplication. 
\end{rem}


	\section{A Non-analytic Superposition Result on Gevrey-modulation Spaces} \label{Gevreysuperposition}		


	We need to proceed with some preparations.  An essential tool in proving our main result 
will be a certain subalgebra property of the Gevrey-modulation spaces $\mathcal{GM}_{p,q}^{s}$. Therefore we consider the 
following  decomposition of the phase space. Let $R>0$ and $\epsilon=(\epsilon_1,\ldots, \epsilon_n)$ be fixed with 
$\epsilon_j \in \{0,1\}$, $j=1,\ldots,n$. Then a decomposition of $\R^n$ into $(2^n+1)$ parts is given by
\[ P_R := \{ \xi\in\R^n :\: |\xi_j|\leq R, j=1,\ldots, n \} \]
and
\[ P_R(\epsilon) := \{ \xi\in \R^n: \: \sgn (\xi_j) = (-1)^{\epsilon_j}, \: j=1,\ldots,n \} \setminus P_R. \]
For given $p,q,s$, $\epsilon=(\epsilon_1,\ldots, \epsilon_n)$ and $R>0$ we introduce the spaces
	\[ \mathcal{GM}_{p,q}^{s}(\epsilon, R) := \{ f\in \mathcal{GM}_{p,q}^{s}(\R^n): \: \supp \F(f) \subset P_R(\epsilon) \}\, . \]
As above we will use the convention that for given $q$, $1 \le q \le \infty$, the number $q'$ is defined by 
$\frac{1}{q}+\frac{1}{q'}=1$.

\begin{prop} \label{subalgebra1}
Let $1\leq p, q \leq \infty$, $s>1$ and $R\geq 2$. We put $\frac 1p := \frac{1}{p_1} + \frac{1}{p_2}$. 
Then, for any admissible $\epsilon$ 
\[ 
\|\, f  \cdot g\, \|_{\mathcal{GM}_{p,q}^{s}} \leq D_R \, \|f\|_{\mathcal{GM}_{p_1,q}^{s}}\,  \|g\|_{\mathcal{GM}_{p_2,q}^{s}} 
\]
	holds for all $f \in\mathcal{GM}_{p_1,q}^{s}(\epsilon, R)$ and all $g \in\mathcal{GM}_{p_2,q}^{s}(\epsilon, R)$,  
where the constant $D_R$ is given by
\[ 
D_R := C_0 \, \left(\int_{\delta q' (R-2)^{\frac{1}{s}}}^\infty y^{sn-1} e^{-y} \, dy\right)^{\frac{1}{q'}}. 
\]
Here  $C_0>0$ denotes a constant which  depends only on $p,q,s$ and $n$. 
\end{prop}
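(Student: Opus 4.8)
The plan is to repeat the proof of Theorem \ref{algebra} essentially verbatim, the only---but decisive---difference being that the Fourier support condition $\supp \F f, \supp \F g \subset P_R(\epsilon)$ confines the ranges of the summation indices and thereby shrinks the constant $c_3$ from \eqref{ws-25} to a tail that decays as $R\to\infty$. First I would record, exactly as before, the product representation $f\cdot g = \sum_{j,l\in\Z^n} f_j g_l$, note that $\sigma_k\F(f_j g_l)$ vanishes unless $\|k-(j+l)\|_\infty\le 2$, and reduce the claim---using the Fatou property of Lemma \ref{basic}(iii)---to a uniform bound for $\big(\sum_k S_{1,t,k}^q\big)^{1/q}$ and $\big(\sum_k S_{2,t,k}^q\big)^{1/q}$, with $S_{1,t,k}, S_{2,t,k}$ defined as in the proof of Theorem \ref{algebra} and $t$ ranging over the finitely many shifts with $\|t\|_\infty\le 2$.

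The crucial new input is a geometric localization of the indices. Since $\supp\sigma_l \subset Q_l = \{\xi:\|\xi-l\|_\infty\le 1\}$, the block $g_l = \F^{-1}(\sigma_l\F g)$ is nonzero only if $Q_l\cap P_R(\epsilon)\neq\emptyset$, and likewise for the $f$-blocks. Now every $\xi\in P_R(\epsilon)$ has at least one coordinate with $|\xi_i|>R$, so any index $l$ with $Q_l\cap P_R(\epsilon)\neq\emptyset$ satisfies $\|l\|_\infty>R-1$, hence $|l|>R-1$; the same lower bound holds for the $f$-index. A short computation with these support sets shows that every index contributing to $S_{1,t,k}$ or $S_{2,t,k}$ has norm exceeding $R-2$, the loss of one further unit coming from the bounded coordinate shift $\|t\|_\infty\le 2$ that enters through the convolution in $S_{2,t,k}$. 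Consequently, in the Hölder step \eqref{ws-25} the weight $e^{-\delta|\,\cdot\,|^{1/s}}$ is summed not over all of $\Z^n$ but only over the occurring indices, so $c_3$ is to be replaced by
\[
\tilde c_3(R) := \Big(\sum_{\substack{m\in\Z^n,\\ |m|>R-2}} e^{-\delta|m|^{\frac1s}q'}\Big)^{\frac{1}{q'}},
\]
while the remaining estimates---the use of Lemma \ref{weightEstimate}, the bound \eqref{ws-27}, and the factorization of the double sum---carry over unchanged and contribute only the window and dimension constants already present in Theorem \ref{algebra}.

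It then remains to evaluate the tail. Comparing the sum with an integral and passing to polar coordinates gives
\[
\sum_{|m|>R-2} e^{-\delta|m|^{\frac1s}q'} \asymp \int_{R-2}^\infty r^{n-1} e^{-\delta q' r^{\frac1s}}\,dr,
\]
and the substitution $y=\delta q'\,r^{1/s}$ transforms the right-hand side into $\dfrac{s}{(\delta q')^{sn}}\displaystyle\int_{\delta q'(R-2)^{1/s}}^\infty y^{sn-1}e^{-y}\,dy$. Collecting this with the constants from the preceding step into a single $C_0=C_0(p,q,s,n)$ yields exactly
\[
\|f\cdot g\|_{\mathcal{GM}_{p,q}^{s}} \le C_0\,\Big(\int_{\delta q'(R-2)^{1/s}}^\infty y^{sn-1}e^{-y}\,dy\Big)^{\frac{1}{q'}}\,\|f\|_{\mathcal{GM}_{p_1,q}^{s}}\,\|g\|_{\mathcal{GM}_{p_2,q}^{s}} = D_R\,\|f\|_{\mathcal{GM}_{p_1,q}^{s}}\,\|g\|_{\mathcal{GM}_{p_2,q}^{s}}.
\]
The hypothesis $R\ge 2$ guarantees that the lower integration limit is nonnegative, so $D_R$ is finite, and $D_R\to 0$ as $R\to\infty$ since the integral is the tail of a convergent $\Gamma$-type integral.

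The one place requiring genuine care---and which I regard as the main obstacle---is the index-localization step: one must verify that \emph{both} summands $S_{1,t,k}$ and $S_{2,t,k}$ are governed by indices of norm $>R-2$, uniformly in $k$ and in the finitely many shifts $t$, so that the tail bound $\tilde c_3(R)$ can be pulled out of the $k$-summation precisely as the constant $c_3$ was in Theorem \ref{algebra}. Once this uniform restriction is secured, the Hölder and modular machinery of the previous proof applies without change, and only the elementary integral evaluation remains. As before, the case $\max(p,q)=\infty$ is handled by the obvious passage to $\ell^\infty$- and $L^\infty$-norms.
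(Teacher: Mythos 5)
Your proposal is correct and follows essentially the same route as the paper's proof: use the support condition to force all contributing indices into $\{\|\cdot\|_\infty > R-1\}$, replace the constant $c_3$ of \eqref{ws-25} by the corresponding tail sum, and evaluate that tail by comparison with the incomplete Gamma integral after the substitution $y=\delta q' r^{1/s}$. The only (harmless) imprecision is in the $S_{2,t,k}$ bookkeeping, where the shift $\|t\|_\infty\le 2$ actually yields $\|l-k\|_\infty > R-3$ rather than $>R-2$; this moves the lower integration limit by an amount whose effect is uniformly bounded in $R\ge 2$ and is absorbed into $C_0$, exactly as in the paper's own "analogous computations" for the second sum.
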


\begin{proof}
	Let $f\in\mathcal{GM}_{p_1,q}^{s}(\epsilon, R)$ and $g\in\mathcal{GM}_{p_2,q}^{s}(\epsilon, R)$. By
	\[ \supp (\F f \ast \F g) \subset \{ \xi +\eta: \xi \in \supp \F (f), \eta \in \supp \F (g) \} \]
	we have $\supp \F (fg) \subset P_R(\epsilon)$. In order to show the subalgebra property we follow the same steps as in the
 proof of Theorem \ref{algebra}. 
 Let 
 \[
 P_R^* (\epsilon):= \Big\{k \in \Z^n: \quad \|k\|_\infty>R-1\, , \quad \sgn (k_j) = (-1)^{\epsilon_j}, \: j=1,\ldots,n\Big\}.
 \]
 Hence, if $\supp \sigma_k \cap P_R(\epsilon) \neq \emptyset$, then $k \in P_R^*(\epsilon)$ follows.
In our situation the estimate \eqref{ws-03} can be rewritten as

\begin{align*}
 \Big( \sum_{k\in P_R^* (\epsilon)} e^{|k|^{\frac{1}{s}}q} \|\F^{-1} \big(\sigma_k \F (f\cdot g) \big)\|_{L^p}^q \Big)^{\frac{1}{q}}  \hspace{7cm} 
\end{align*}
\begin{eqnarray*}
& \leq & c_2 \, \max_{\substack{t \in \Z^n, \\ -3<t_i < 3, \\ i=1,\ldots, n}} \Bigg( \sum_{k\in P_R^* (\epsilon)} e^{|k|^{\frac{1}{s}}q} 
\Bigg[ \sum_{\substack{l\in\Z^n, \\ l, t-(l-k)\in P_R^* (\epsilon)}} \|f_{t-(l-k)}\|_{L^{p_1}} \|g_l\|_{L^{p_2}} \Bigg]^q \Bigg)^{\frac{1}{q}}.
\end{eqnarray*}
According to this estimate we introduce the abbreviations 
 \begin{eqnarray*}
 S_{1,t,k} & := &  \sum_{\substack{l\in\Z^n: ~ l, t-(l-k)\in P_R^* (\epsilon), \\
 |l|\leq |l-k|}} e^{|l-k|^{\frac{1}{s}}} \|f_{t-(l-k)}\|_{L^{p_1}} e^{|l|^{\frac{1}{s}}} \|g_l\|_{L^{p_2}} e^{-\delta|l|^{\frac{1}{s}}}, 
\\
S_{2,t,k} & := & \sum_{\substack{l\in\Z^n: ~ l, t-(l-k)\in P_R^* (\epsilon), \\ |l-k|\leq |l|}} e^{|l-k|^{\frac{1}{s}}} \|f_{t-(l-k)}\|_{L^{p_1}} e^{|l|^{\frac{1}{s}}} 
\|g_l\|_{L^{p_2}} e^{-\delta|l-k|^{\frac{1}{s}}}  
\end{eqnarray*}
for all $k \in P_R^* (\epsilon)$. As above we conclude 
\begin{eqnarray*}
S_{1,t,k} & \leq &   \Bigg(  \sum_{\substack{l\in\Z^n, \\ l, t-(l-k)\in P_R^* (\epsilon), \\
 |l|\leq |l-k|}} \Big| e^{|l-k|^{\frac{1}{s}}} \|f_{t-(l-k)}\|_{L^{p_1}} e^{|l|^{\frac{1}{s}}} \|g_l\|_{L^{p_2}}\Big|^q \Bigg)^{\frac{1}{q}} \\
			& & \qquad \qquad \qquad \times \quad  \Bigg( \sum_{\substack{l\in\Z^n: ~ l, t-(l-k)\in P_R^* (\epsilon), \\
 |l|\leq |l-k|}} e^{-\delta q' |l|^{\frac{1}{s}}}   \Bigg)^{\frac{1}{q'}} \, .
\end{eqnarray*}
By definition of the set $P_R^* (0, \ldots \, , 0)$ we find in case $R\geq 2$
\begin{eqnarray*}
 \sum_{\substack{l\in\Z^n, \\  l, t-(l-k)\in P_R^* (0,\,  \ldots \, ,0), \\
 |l|\leq |l-k|}} e^{-\delta q' |l|^{\frac{1}{s}}}   
&\le &  
\sum_{l\in \Z^n:~ \|l\|_\infty > R-1}  e^{-\delta q'|l|^{\frac{1}{s}}}  
\\
&\le &  
\sum_{\substack{l\in\Z^n, \\ \|l\|_\infty > R-1}} \int_{\substack{x\in\R^n, \\ l_i -1 \le x_i\le l_i, \\ i=1, \ldots \, n}} e^{-\delta q'|x|^{\frac{1}{s}}}\, dx  
\\
&\le &   \int_{\|x\|_\infty > R-2} e^{-\delta q'|x|^{\frac{1}{s}}}\, dx 
\\
&\le &  
\int_{|x| >R-2} e^{-\delta q'|x|^{\frac{1}{s}}}\, dx \, .
\end{eqnarray*}
A symmetry argument yields the same estimate in case $\epsilon \neq (0,\,\ldots \, ,0)$.
By means of some simple calculations we obtain
\begin{eqnarray}\label{ws-06}
\int_{|x| >R-2} e^{-\delta q'|x|^{\frac{1}{s}}}\, dx 
&=&    2\, \frac{\pi^{n/2}}{\Gamma (n/2)}\,  \int_{R-2}^\infty r^{n-1} e^{-\delta q' r^{\frac{1}{s}}} \, dr
\nonumber
\\
& \stackrel{[t=r^{\frac{1}{s}}]}{=} & 2\, \frac{\pi^{n/2}}{\Gamma (n/2)}\, s\,  
\int_{(R-2)^{\frac{1}{s}}}^\infty t^{sn-1} e^{-\delta q' t} \, dt 
\nonumber
\\
& \stackrel{[y=\delta q' t]}{=} & 2\, \frac{\pi^{n/2}}{\Gamma (n/2)}\, s\,  (\delta q')^{-sn} 
\int_{\delta q' (R-2)^{\frac{1}{s}}}^\infty y^{sn-1} e^{-y} \, dy
\nonumber
\\
& = : & E_R\, .
\end{eqnarray}
Hence 
\[
\Big(\sum_{k \in P_R^* (\epsilon)} S_{1,t,k}^q\Big)^{1/q} 
 \leq   E_R^{1/q'} \, \Bigg( \sum_{j\in\Z^n} 
e^{|j|^{\frac{1}{s}}q} \|f_{t-j}\|_{L^{p_1}}^q \sum_{k\in\Z^n} e^{|j+k|^{\frac{1}{s}}q} 
\|g_{j+k}\|_{L^{p_2}}^q \Bigg)^{\frac{1}{q}} 
\, .
\]
With $c_4$ defined as above this implies 
\[
\Big(\sum_{k \in \Z^n} S_{1,t,k}^q\Big)^{1/q} \le E_R^{1/q'} \, c_4 \, \|g\|_{\mathcal{GM}^s_{p_2,q}} \|f\|_{\mathcal{GM}^s_{p_1,q}}. 
\]
For the second sum  the estimate
\[
\Big(\sum_{k \in \Z^n} S_{2,t,k}^q\Big)^{1/q} \le E_R^{1/q'} \, c_5\,  \|g\|_{\mathcal{GM}^s_{p_2,q}} \|f\|_{\mathcal{GM}^s_{p_1,q}} 
\]
follows by analogous computations.
\end{proof}

Arguing as in proof of Corollary \ref{algebra2} we obtain the following.

\begin{cor} \label{subalgebra}
	Let $1\leq p, q \leq \infty$ and $s>1$. For $R\geq 2$ and any $\epsilon$ the spaces $\mathcal{GM}_{p,q}^{s}(\epsilon, R)$
	are subalgebras of $\mathcal{GM}_{p,q}^{s}$. Furthermore, it holds
\[ 
\|\, f  \cdot g\,\|_{\mathcal{GM}_{p,q}^{s}} \leq F_R \, \|f\|_{\mathcal{GM}_{p,q}^{s}}\,  \|g\|_{\mathcal{GM}_{p,q}^{s}} 
\]
for all $f,g \in\mathcal{GM}_{p,q}^{s}(\epsilon, R)$. The constant $F_R$ can be specified as
\[ F_R := C_1 \, \left(\int_{\delta q' (R-2)^{\frac{1}{s}}}^\infty y^{sn-1} e^{-y} \, dy\right)^{\frac{1}{q'}}\, , 
\]
	where the constant $C_1>0$ depends only on $p,q,s$ and $n$. 
\end{cor}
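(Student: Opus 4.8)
The plan is to imitate the derivation of Corollary \ref{algebra2} from Theorem \ref{algebra}, but now starting from the sharper bilinear estimate of Proposition \ref{subalgebra1} and combining it with the monotonicity of $\mathcal{GM}_{p,q}^{s}$ in the integrability parameter $p$ provided by Corollary \ref{einbettung}. Before the quantitative bound I would first record that $\mathcal{GM}_{p,q}^{s}(\epsilon,R)$ really is a subalgebra. It is a linear subspace because the support condition $\supp \F(f) \subset P_R(\epsilon)$ is stable under sums and scalar multiples, the Fourier support of a sum lying in the union of the individual Fourier supports. Closure under pointwise multiplication is exactly the observation already made at the start of the proof of Proposition \ref{subalgebra1}: for $\xi,\eta \in P_R(\epsilon)$ the coordinates $\xi_j,\eta_j$ share the sign $(-1)^{\epsilon_j}$, so $|\xi_j+\eta_j|=|\xi_j|+|\eta_j|$, and since at least one coordinate of $\xi$ exceeds $R$ in modulus, so does the corresponding coordinate of $\xi+\eta$; hence $\supp \F(fg)\subset P_R(\epsilon)$. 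Together with the product estimate of Proposition \ref{subalgebra1} (which already yields $fg\in\mathcal{GM}_{p,q}^{s}$) this gives $fg\in\mathcal{GM}_{p,q}^{s}(\epsilon,R)$.

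For the quantitative bound I would split $p$ symmetrically by setting $p_1=p_2=2p$, so that $\frac 1p=\frac{1}{2p}+\frac{1}{2p}$. Since $p\le 2p$, Corollary \ref{einbettung} furnishes a continuous embedding $\mathcal{GM}_{p,q}^{s}\hookrightarrow\mathcal{GM}_{2p,q}^{s}$ with some constant $C_2$; as the Fourier support is not altered by this embedding, any $f\in\mathcal{GM}_{p,q}^{s}(\epsilon,R)$ also lies in $\mathcal{GM}_{2p,q}^{s}(\epsilon,R)$. Applying Proposition \ref{subalgebra1} with these parameters and then the embedding to each factor gives
\begin{eqnarray*}
\|\, f\cdot g\, \|_{\mathcal{GM}_{p,q}^{s}}
& \le & D_R \, \|f\|_{\mathcal{GM}_{2p,q}^{s}}\,\|g\|_{\mathcal{GM}_{2p,q}^{s}} \\
& \le & C_2^2\, D_R\, \|f\|_{\mathcal{GM}_{p,q}^{s}}\,\|g\|_{\mathcal{GM}_{p,q}^{s}},
\end{eqnarray*}
where $D_R=C_0\big(\int_{\delta q'(R-2)^{1/s}}^{\infty} y^{sn-1}e^{-y}\,dy\big)^{1/q'}$. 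Setting $C_1:=C_2^2 C_0$ and $F_R:=C_2^2 D_R$ reproduces exactly the claimed constant, with $C_1$ depending only on $p,q,s$ and $n$.

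I do not expect a genuine obstacle here. The decisive analytic work, namely isolating the smallness factor $E_R^{1/q'}$ coming from the tail integral over $\{|x|>R-2\}$, has already been carried out inside Proposition \ref{subalgebra1}, and the remaining steps are purely the support bookkeeping above and a bounded change of the integrability index. The only point demanding a little care is the degenerate case $p=\infty$, where $2p=\infty$ and the embedding is the identity, so that the estimate holds verbatim with $C_2=1$; the $L^\infty$ and $\ell^\infty$ versions of all sums behave exactly as in the proof of Theorem \ref{algebra}. It is worth noting, finally, that the shape of $F_R$ is the whole point for later use: the tail integral tends to $0$ as $R\to\infty$, so $F_R\to 0$, which is what makes these subalgebras useful in the superposition argument of Section \ref{Gevreysuperposition}.
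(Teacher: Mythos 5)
Your argument is correct and coincides with the paper's intended derivation: the authors obtain Corollary \ref{subalgebra} from Proposition \ref{subalgebra1} exactly by "arguing as in the proof of Corollary \ref{algebra2}", i.e.\ taking $p_1=p_2=2p$ and invoking the embedding $\mathcal{GM}_{p,q}^{s}\hookrightarrow\mathcal{GM}_{2p,q}^{s}$ of Corollary \ref{einbettung}, with the support stability of $P_R(\epsilon)$ under addition giving the subalgebra claim. Nothing further is needed.
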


Note that in the following we assume every function to be real-valued unless it is explicitly stated that 
complex-valued functions are allowed.

In order to establish the next result we need to recall some lemmas. 
The first one concerns a standard estimate of Fourier multipliers, see, e.g., \cite[Theorem 1.5.2]{triebel}.
By $H^s (\R^n)$ we denote the Sobolev space of fractional order $s$ built on $L_2 (\R^n)$.

\begin{lem} \label{bernstein}
Let $1 \le r \le \infty$ and assume that $s>\frac{n}{2}$. Then there exists a constant $C>0$ such that
\[ 
\|\F^{-1} \left( \phi\F f \right)\|_{L^r} \leq C \|\phi\|_{H^s} \|f\|_{L^r} 
\]
holds for all $f\in L^r(\R^n)$ and all $\phi \in H^s(\R^n)$.
\end{lem}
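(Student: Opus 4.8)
The plan is to reduce the multiplier estimate to Young's inequality for convolutions, and then to control the $L^1$-norm of $\F^{-1}\phi$ by the $H^s$-norm of $\phi$. First I would rewrite the Fourier multiplier as a convolution: since $\phi = \F(\F^{-1}\phi)$, the convolution theorem (with the symmetric normalisation used in this paper) gives
\[
\F^{-1}(\phi\,\F f) = (2\pi)^{-n/2}\,(\F^{-1}\phi)*f .
\]
Young's inequality $\|h*f\|_{L^r}\le\|h\|_{L^1}\,\|f\|_{L^r}$, which is valid for every $1\le r\le\infty$, then yields
\[
\|\F^{-1}(\phi\,\F f)\|_{L^r}\le (2\pi)^{-n/2}\,\|\F^{-1}\phi\|_{L^1}\,\|f\|_{L^r},
\]
so the whole statement is reduced to the single estimate $\|\F^{-1}\phi\|_{L^1}\le C\,\|\phi\|_{H^s}$.

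The second step establishes exactly this estimate. I would insert the factor $\langle x\rangle^{-s}\,\langle x\rangle^{s}\equiv 1$ and apply the Cauchy--Schwarz inequality,
\[
\|\F^{-1}\phi\|_{L^1}=\int_{\R^n}\langle x\rangle^{-s}\,\langle x\rangle^{s}\,|\F^{-1}\phi(x)|\,dx
\le\Big(\int_{\R^n}\langle x\rangle^{-2s}\,dx\Big)^{1/2}\Big(\int_{\R^n}\langle x\rangle^{2s}\,|\F^{-1}\phi(x)|^2\,dx\Big)^{1/2}.
\]
Here the hypothesis $s>\frac n2$ is precisely what makes the first factor a finite constant, since $\langle x\rangle^{-2s}$ is integrable exactly when $2s>n$. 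For the second factor I would use the identity $\F^{-1}\phi(x)=\F\phi(-x)$ together with $\langle -x\rangle=\langle x\rangle$ and Plancherel's theorem, obtaining
\[
\int_{\R^n}\langle x\rangle^{2s}\,|\F^{-1}\phi(x)|^2\,dx=\int_{\R^n}\langle x\rangle^{2s}\,|\F\phi(x)|^2\,dx=\|\phi\|_{H^s}^2.
\]
Combining the two steps gives the claim with $C=(2\pi)^{-n/2}\big(\int_{\R^n}\langle x\rangle^{-2s}\,dx\big)^{1/2}$, which depends only on $n$ and $s$, and in particular not on $r$, $\phi$ or $f$.

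There is no serious obstacle here; the only point requiring a little care is the rigorous justification of the convolution representation and of Plancherel when $\phi$ is merely in $H^s(\R^n)$ and $f$ merely in $L^r(\R^n)$. I would handle this by density: first prove the identities for $\phi,f\in\S(\R^n)$, where everything is classical, and then observe that the estimate $\|\F^{-1}\phi\|_{L^1}\le C\|\phi\|_{H^s}$ already shows that $\phi\mapsto\F^{-1}\phi$ extends to a continuous map $H^s(\R^n)\to L^1(\R^n)$. Hence $\F^{-1}\phi\in L^1(\R^n)$ for every $\phi\in H^s(\R^n)$, the convolution $(\F^{-1}\phi)*f$ is a genuine $L^r$-function, and Young's inequality applies directly, completing the argument.
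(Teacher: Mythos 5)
Your proof is correct and is exactly the standard argument behind this multiplier estimate: the paper itself gives no proof but only cites Triebel \cite[Theorem 1.5.2]{triebel}, whose proof is precisely your reduction via the convolution representation, Young's inequality, and the Cauchy--Schwarz/Plancherel bound $\|\F^{-1}\phi\|_{L^1}\le C\|\phi\|_{H^s}$ using the integrability of $\langle x\rangle^{-2s}$ for $s>\frac n2$. Nothing further is needed.
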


The next two technical lemmas have been taken from \cite{brs}.

	\begin{lem} \label{lma46brs}
		Let $N\in \N$ and suppose $a_1, a_2,\ldots, a_N$ to be complex numbers. Then it holds
		\[ a_1 \cdot a_2 \cdot \ldots \cdot a_N -1  = \sum_{l=1}^{N} \sum_{\substack{ j=(j_1,\ldots, j_l), \\ 
0\leq j_1 < \ldots < j_l \leq N }} (a_{j_1} -1) \cdot \ldots \cdot (a_{j_l}-1). \]
	\end{lem}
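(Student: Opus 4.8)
The plan is to recognize the right-hand side as nothing but the expansion of the product $\prod_{i=1}^N a_i$ after writing each factor in the form $a_i = 1 + (a_i - 1)$. First I would set $b_i := a_i - 1$, so that $a_i = 1 + b_i$, and expand
\[
\prod_{i=1}^N a_i = \prod_{i=1}^N (1 + b_i)
\]
by the distributive law. Selecting, for each index $i$, either the summand $1$ or the summand $b_i$, one obtains exactly one term $\prod_{i \in S} b_i$ for each subset $S \subseteq \{1, \ldots, N\}$, whence
\[
\prod_{i=1}^N (1 + b_i) = \sum_{S \subseteq \{1,\ldots,N\}} \prod_{i\in S} b_i ,
\]
with the usual convention that the empty product equals $1$.

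Next I would isolate the contribution of the empty set. The subset $S = \emptyset$ produces precisely the constant term $1$, so subtracting it gives
\[
\prod_{i=1}^N a_i - 1 = \sum_{\emptyset \neq S \subseteq \{1,\ldots,N\}} \prod_{i \in S} (a_i - 1) .
\]
Finally I would rewrite this sum over nonempty subsets as a sum over cardinalities: grouping the sets $S$ by their size $l = |S|$ and listing the elements of each $S$ in increasing order as $j_1 < \ldots < j_l$ converts the right-hand side into
\[
\sum_{l=1}^N \; \sum_{\substack{j = (j_1,\ldots,j_l),\\ 1 \le j_1 < \ldots < j_l \le N}} (a_{j_1}-1)\cdots(a_{j_l}-1) ,
\]
which is exactly the asserted identity.

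There is no genuine obstacle here, since the statement is a purely formal combinatorial identity; the only point demanding a little care is the bookkeeping that matches each nonempty subset $S$ bijectively with an increasing index tuple $(j_1,\ldots,j_l)$. Should one prefer to avoid subset notation altogether, the same conclusion follows by a direct induction on $N$, using at the inductive step the splitting
\[
a_1\cdots a_{N+1} - 1 = (a_1\cdots a_N - 1) + a_1\cdots a_N\,(a_{N+1}-1)
\]
together with a second application of the identity to $a_1\cdots a_N$ inside the last term; the three resulting groups then account respectively for the nonempty subsets of $\{1,\ldots,N+1\}$ that avoid $N+1$, the singleton $\{N+1\}$, and the subsets containing $N+1$ along with at least one smaller index, which jointly exhaust all nonempty subsets of $\{1,\ldots,N+1\}$.
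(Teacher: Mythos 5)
Your proof is correct. Note that the paper itself offers no proof of this lemma — it only cites Lemma 4.6 of \cite{brs} — so there is no internal argument to compare against; your self-contained proof via the substitution $a_i = 1+(a_i-1)$ and the subset expansion of $\prod_{i=1}^N (1+b_i)$, isolating the empty set and re-indexing nonempty subsets by increasing tuples, is the standard and natural argument (and the inductive variant you sketch is equally valid). One detail worth flagging: the paper's statement writes the index range as $0 \leq j_1 < \ldots < j_l \leq N$, which is evidently a typo since there is no $a_0$; your version with $1 \leq j_1 < \ldots < j_l \leq N$ is the intended reading, and your bookkeeping matching each nonempty subset bijectively to such a tuple is exactly what makes the identity hold term by term.
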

	\begin{proof}
		Cf. Lemma 4.6. in \cite{brs}.
	\end{proof}
	
	\begin{lem} \label{lma45brs}
		Let $\alpha>0$. Define 
		\[ f(t) := \int_{t}^\infty e^{-y} y^{\alpha-1} \, dy, \qquad t\geq 0. \]
		The inverse $g$ of the function $f$ maps $(0,\Gamma(\alpha)]$ onto $[0,\infty)$ and it holds
		\[ \lim_{u\downarrow 0} \frac{g(u)}{\log \frac{1}{u}} = 1. \]
	\end{lem}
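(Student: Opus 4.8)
The plan is to read off the behaviour of $g$ near $0$ from the asymptotics of the upper incomplete gamma function $f$ at infinity. First I would record the elementary analytic properties of $f$. Since $f'(t) = -e^{-t}t^{\alpha-1} < 0$ for $t > 0$, and $f$ is continuous on $[0,\infty)$ with $f(0) = \Gamma(\alpha)$ and $f(t) \to 0$ as $t \to \infty$, the map $f$ is a continuous, strictly decreasing bijection from $[0,\infty)$ onto $(0,\Gamma(\alpha)]$. This already yields the first assertion: the inverse $g$ is well defined, continuous and strictly decreasing, maps $(0,\Gamma(\alpha)]$ onto $[0,\infty)$, and satisfies $g(u) \to \infty$ precisely as $u \downarrow 0$.

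The core of the argument is the asymptotic $f(t) \sim t^{\alpha-1}e^{-t}$ as $t \to \infty$. I would obtain this by l'Hospital's rule applied to the quotient $f(t)/(t^{\alpha-1}e^{-t})$: differentiating numerator and denominator gives
\[
\frac{-e^{-t}t^{\alpha-1}}{e^{-t}t^{\alpha-1}\bigl((\alpha-1)/t - 1\bigr)} = \frac{-1}{(\alpha-1)/t - 1} \longrightarrow 1
\]
as $t \to \infty$. (Alternatively, a single integration by parts reduces $f$ to $t^{\alpha-1}e^{-t}$ plus a lower-order remainder.) Taking logarithms then gives
\[
-\log f(t) = t - (\alpha-1)\log t + o(1) \qquad (t \to \infty),
\]
so that $-\log f(t)/t \to 1$; the correction $(\alpha-1)\log t$ and the $o(1)$ term are both negligible against the dominant linear term $t$.

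Finally I would substitute $u = f(t)$, equivalently $t = g(u)$, noting that $u \downarrow 0$ corresponds to $t = g(u) \to \infty$. Since $\log\frac{1}{u} = -\log f(t)$, the quotient becomes
\[
\frac{g(u)}{\log\frac{1}{u}} = \frac{t}{-\log f(t)},
\]
which tends to $1$ by the previous step. This proves the claim.

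The computation is essentially routine; the only point requiring a little care is the asymptotic $f(t) \sim t^{\alpha-1}e^{-t}$, and in particular making sure the subexponential correction $(\alpha-1)\log t$ does not interfere with the limit --- it does not, because it is divided by $t$. The case distinction $\alpha \geq 1$ versus $0 < \alpha < 1$ affects only the sign of this harmless correction and the behaviour of $t^{\alpha-1}$ near $0$, neither of which enters the limit as $u \downarrow 0$.
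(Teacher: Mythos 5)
Your proof is correct. The paper itself gives no argument for this lemma --- it only cites Lemma 4.5 of Bourdaud--Reissig--Sickel \cite{brs} --- so there is no internal proof to compare against; your self-contained argument (monotone bijection from $f'<0$, the asymptotic $f(t)\sim t^{\alpha-1}e^{-t}$ via l'Hospital or integration by parts, then taking logarithms and substituting $t=g(u)$) is the standard route and every step checks out, including the observation that the $(\alpha-1)\log t$ correction is killed by the division by $t$.
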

	\begin{proof}
		Cf. Lemma 4.5. in \cite{brs}.
	\end{proof}

The non-analytic superposition, which will be stated in Theorem \ref{Superposition}, is based on the following lemma.

\begin{lem} \label{estSuper}
Let $s>1$, $1< p< \infty$ and $1\leq q \le \infty$. Suppose $u \in \mathcal{GM}_{p,q}^s(\R^n)$. Then it holds
\[ \| e^{iu}-1\|_{\mathcal{GM}_{p,q}^s} \leq c \, \|u\|_{\mathcal{GM}_{p,q}^s}\, \left\{
\begin{array}{lll} e^{b \|u\|^{\frac{1}{s}}_{\mathcal{GM}_{p,q}^s}\log 
\|u\|_{\mathcal{GM}_{p,q}^s}} & \quad &  \mbox{ if } \|u\|_{\mathcal{GM}_{p,q}^s} > 1, \\
1 & \quad&   \mbox{ if } \|u\|_{\mathcal{GM}_{p,q}^s} \leq 1
\end{array}\right. 
\]
with constants $b,c>0$ independent of $u$.
\end{lem}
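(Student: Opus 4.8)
The plan is to split $e^{iu}-1$ into a band‑limited low‑frequency contribution and a high‑frequency contribution organized over the $2^n$ sign‑octants, and to choose the separating radius $R$ in balance with $\|u\|:=\|u\|_{\mathcal{GM}^s_{p,q}}$. The case $\|u\|\le 1$ is immediate: by Corollary \ref{algebra2} one has $\|u^k\|\le C^{k-1}\|u\|^k$, so the power series for $e^{iu}-1$ gives $\|e^{iu}-1\|\le C^{-1}(e^{C\|u\|}-1)\le c\,\|u\|$ since $\|u\|\le1$. Hence I assume $\|u\|>1$ from now on. The naive use of the algebra property on all of $u$ would only produce the exponential bound $e^{C\|u\|}$, and the whole point is to beat this by exploiting, on the high‑frequency part, the decay of the subalgebra constant $F_R$ from Corollary \ref{subalgebra}.

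First I fix the radius. Using the explicit form of $F_R$ in Corollary \ref{subalgebra} together with the asymptotics of the inverse incomplete Gamma function (Lemma \ref{lma45brs}, applied with $\alpha=sn$), I choose $R\ge2$ so that $F_R\,\|u\|\asymp1$. Lemma \ref{lma45brs} forces $\delta q'(R-2)^{1/s}\asymp q'\log\|u\|$, i.e. $(R-2)^{1/s}\asymp\delta^{-1}\log\|u\|$, hence $R\asymp(\log\|u\|)^s$, with $\delta=2-2^{1/s}$ as in Lemma \ref{weightEstimate}. I then write $u=u^{(0)}+\sum_\epsilon u^{(\epsilon)}$ by grouping the uniform decomposition pieces, where $u^{(0)}$ collects the $\Box_k u$ with $k$ central (so $\supp\F u^{(0)}\subset\{|\xi|\lesssim R\}$) and $u^{(\epsilon)}\in\mathcal{GM}^s_{p,q}(\epsilon,R)$; the corresponding projections are bounded on $\mathcal{GM}^s_{p,q}$ (trivially for the grouping, and by Lemma \ref{bernstein} for the smooth cut‑offs), so $\|u^{(0)}\|,\|u^{(\epsilon)}\|\lesssim\|u\|$. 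Writing $e^{iu}=e^{iu^{(0)}}\prod_\epsilon e^{iu^{(\epsilon)}}$ and expanding the product of these $2^n+1$ factors by Lemma \ref{lma46brs} reduces the task to estimating each single factor $e^{iu^{(\epsilon)}}-1$ and $e^{iu^{(0)}}-1$, and to recombining them with Corollary \ref{algebra2}.

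For the octant factors the subalgebra property does the work: since $P_R(\epsilon)$ is closed under addition, $(u^{(\epsilon)})^k\in\mathcal{GM}^s_{p,q}(\epsilon,R)$ and Corollary \ref{subalgebra} iterates to $\|(u^{(\epsilon)})^k\|\le F_R^{k-1}\|u^{(\epsilon)}\|^k$. Summing the series and using $F_R\|u^{(\epsilon)}\|\le F_R\|u\|\lesssim1$ gives $\|e^{iu^{(\epsilon)}}-1\|\le F_R^{-1}(e^{F_R\|u^{(\epsilon)}\|}-1)\lesssim\|u^{(\epsilon)}\|\lesssim\|u\|$, only a polynomial contribution. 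The core of the argument is the band‑limited factor $e^{iu^{(0)}}-1$, where the subalgebra gives no gain because $P_R$ meets the origin, so I argue directly on the uniform decomposition. As $(u^{(0)})^k$ is supported in $\{|\xi|\lesssim kR\}$, the frequency $m$ only sees powers with $k\ge|m|/(\sqrt n R)$, and I split the defining sum of $\|e^{iu^{(0)}}-1\|$ at a radius $M_\ast\asymp R\|u\|$. For $|m|\le M_\ast$ I discard the power series in favour of the cancellation $|e^{iu^{(0)}}-1|\le|u^{(0)}|$, so that $\|\Box_m(e^{iu^{(0)}}-1)\|_{L^p}\lesssim\|u^{(0)}\|_{L^p}\lesssim\mathrm{poly}(\|u\|)$; since the weight there is at most $e^{M_\ast^{1/s}}$ and there are only $\lesssim M_\ast^n$ such $m$, this range contributes $\lesssim e^{M_\ast^{1/s}}\,\mathrm{poly}(\|u\|)$. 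For $|m|>M_\ast$ I use the termwise bound $\|\Box_m((u^{(0)})^k)\|_{L^p}\le\|u^{(0)}\|_{L^\infty}^{k-1}\|u^{(0)}\|_{L^p}$ with $\|u^{(0)}\|_{L^\infty}\lesssim\|u^{(0)}\|_{\mathcal{GM}^s_{\infty,\infty}}\lesssim\|u\|$ (no $R$‑loss, cf. the proof of Corollary \ref{gevrey}); there $k\ge|m|/(\sqrt nR)$ lies a fixed factor beyond the maximum of $X^k/k!$ at $X\asymp\|u\|$, so the factorial beats the weight and this tail is negligible. Because $M_\ast^{1/s}\asymp R^{1/s}\|u\|^{1/s}\asymp\|u\|^{1/s}\log\|u\|$ (using $R\asymp(\log\|u\|)^s$), I obtain $\|e^{iu^{(0)}}-1\|\lesssim\mathrm{poly}(\|u\|)\,e^{b\|u\|^{1/s}\log\|u\|}$.

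Finally I reassemble: the expansion of Lemma \ref{lma46brs} has at most $2^{\,2^n+1}$ terms, each a product of at most $2^n+1$ factors, estimated by Corollary \ref{algebra2} as a fixed power of $C$ times the product of the single‑factor norms. The dominant term contains the band‑limited factor and is bounded by $\mathrm{poly}(\|u\|)\,e^{b\|u\|^{1/s}\log\|u\|}$; absorbing the polynomial prefactor into a slightly larger $b$ (legitimate since $\|u\|^{1/s}\ge1$) and extracting one factor $\|u\|$ yields the asserted $c\,\|u\|\,e^{b\|u\|^{1/s}\log\|u\|}$. I expect the genuine obstacle to be precisely the band‑limited factor $e^{iu^{(0)}}-1$: one must avoid the naive power‑series bound (which only gives $e^{C\|u\|}$) by using the cancellation $|e^{iu^{(0)}}-1|\le|u^{(0)}|$ below the transition frequency $M_\ast$ and the factorial decay above it, and one must check that the two competing demands on $R$ — smallness of $F_R$ for the octant factors and tightness of the band limit for the central factor — are met simultaneously exactly at $R\asymp(\log\|u\|)^s$, which is what produces the extra logarithm in the exponent.
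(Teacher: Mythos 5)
Your proposal is correct and follows essentially the same route as the paper's proof: a sign–octant decomposition of the phase space at radius $R$, the product expansion of Lemma \ref{lma46brs}, the decaying subalgebra constant $F_R$ of Corollary \ref{subalgebra} for the octant factors, the cancellation $|e^{iu^{(0)}}-1|\le |u^{(0)}|$ combined with a Taylor/frequency tail estimate for the band-limited central factor, and the balancing choice $R\asymp(\log\|u\|_{\mathcal{GM}_{p,q}^s})^s$ via Lemma \ref{lma45brs}. The only deviations are cosmetic: the paper cuts $u$ with sharp characteristic functions of $P_R$ and $P_R(\epsilon)$ (which is where the Riesz multiplier theorem and hence the restriction $1<p<\infty$ enter; your grouping of the $\Box_k u$ would need the octant index sets adjusted slightly so that the supports really lie in sets closed under addition and away from the origin), it splits the Taylor series at order $r\asymp\|u\|$ rather than the frequency sum at $M_*\asymp R\|u\|$ (equivalent, since the head of the series is band-limited to $\asymp Rr$), and it balances via $F_R\asymp\|u\|^{1/s-1}$ rather than $F_R\|u\|\asymp 1$ --- both of which yield $R\asymp\log^s\|u\|$ and the asserted bound.
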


\begin{proof}
This proof basically follows the same steps as  the proof of Theorem 2.3 in \cite{brs}. \\
{\em Step 1.} Let $u\in \mathcal{GM}_{p,q}^s(\R^n)$ satisfying $\supp \F (u) \subset P_R$ for some $R\geq 2$. \\
First  we consider the Taylor expansion
\[ e^{\i u}-1 = \sum_{l=1}^r \frac{(\i u)^l}{l!} + \sum_{l=r+1}^{\infty} \frac{(\i u)^l}{l!} \]
resulting in the norm estimate
\[ 
\| e^{\i u}-1\|_{\mathcal{GM}_{p,q}^s} \leq \Big\| \sum_{l=1}^r \frac{(\i u)^l}{l!} 
\Big\|_{\mathcal{GM}_{p,q}^s} + \Big\| \sum_{l=r+1}^{\infty} \frac{(\i u)^l}{l!} \Big\|_{\mathcal{GM}_{p,q}^s} =: S_1 + S_2. 
\]
By Corollary \ref{algebra2}, see in particular \eqref{ws-07}, we obtain
\[ S_2 \leq \sum_{l=r+1}^\infty \frac{1}{l!} \|u^l\|_{\mathcal{GM}_{p,q}^s} \leq \frac{1}{C_1} 
\sum_{l=r+1}^{\infty} \frac{ (C_1\, \|u\|_{\mathcal{GM}_{p,q}^s})^l}{l!}. 
\]
		Now we choose $r$ as a function of $\|u\|_{\mathcal{GM}_{p,q}^s}$ and distinguish two cases:
		\begin{enumerate}
			\item $C_1\, \|u\|_{\mathcal{GM}_{p,q}^s}>1$. Assume that
\begin{equation}\label{ws-11} 
3\, C_1\, \|u\|_{\mathcal{GM}_{p,q}^s} \leq r \leq 3\, C_1 \, \|u\|_{\mathcal{GM}_{p,q}^s} +1 
\end{equation}
and recall Stirling's formula $l! = \Gamma(l+1) \geq l^l e^{-l} \sqrt{2\pi l}$. Thus, we get
\begin{eqnarray*}
\sum_{l=r+1}^{\infty} \frac{(C_1\|u\|_{\mathcal{GM}_{p,q}^s})^l}{l!} & \leq &  
\sum_{l=r+1}^{\infty} \left( \frac{r}{l} \right)^l \left(\frac{e}{3}\right)^l \frac{1}{\sqrt{2\pi l}} 
\\
&\leq &   \sum_{l=r+1}^\infty \left(\frac{e}{3} \right)^l \leq  \frac{3}{3-e}.
\end{eqnarray*}
			\item $C_1 \, \|u\|_{\mathcal{GM}_{p,q}^s}\leq 1$. It follows
\begin{equation*}
 \sum_{l=r+1}^{\infty} \frac{(C_1\, \|u\|_{\mathcal{GM}_{p,q}^s})^l}{l!} \leq  
\sum_{l=1}^{\infty} \frac{(C_1\, \|u\|_{\mathcal{GM}_{p,q}^s})^l}{l!}
\leq  C_1\,  e\,  \|u\|_{\mathcal{GM}_{p,q}^s}.
\end{equation*}
\end{enumerate}

Both together can be summarized as 
\begin{equation}\label{ws-08}
 S_2 \le C_2 \, \|u\|_{\mathcal{GM}_{p,q}^s}\, , \qquad C_2 := \max \Big(C_1 e, \frac{3}{3-e}\Big).
\end{equation}
To estimate $S_1$ we check the support of $\F u^\ell $ and find
\begin{eqnarray*}
S_1 = \Big\| \sum_{l=1}^r \frac{(\i u)^l}{l!} \Big\|_{\mathcal{GM}_{p,q}^{s}} & = & 
\Big( \sum_{k\in\Z^n} e^{|k|^{\frac{1}{s}}q} \Big\|\Box_k \Big(\sum_{l=1}^{r} \frac{(\i u)^l}{l!} \Big) 
\Big\|_{L^p}^q \Big)^{\frac{1}{q}} \\
& = & \Big( \sum_{\substack{k\in\Z^n, \\ -Rr-1<k_i< Rr+1, \\ i=1,\ldots,n}} e^{|k|^{\frac{1}{s}}q} 
\Big\|\Box_k \Big(\sum_{l=1}^{r} \frac{(\i u)^l}{l!} \Big) \Big\|_{L^p}^q \Big)^{\frac{1}{q}} \\
& \leq & \Big( \sum_{\substack{k\in\Z^n, \\ -Rr-1<k_i< Rr+1, \\ i=1,\ldots,n}} e^{|k|^{\frac{1}{s}}q} 
\|\Box_k (e^{\i u}-1) \|_{L^p}^q \Big)^{\frac{1}{q}} + S_2\, .
	\end{eqnarray*}
Concerning  $S_2$ we proceed as above. 
To estimate the first part we observe that
\[
C_3 := \sup_{k \in \Z^n } \, \| \, \sigma_k \, \|_{H^t} = \| \, \sigma_0 \, \|_{H^t} <\infty\, , 
\]
see Lemma \ref{bernstein}.
Furthermore,  
$\cos, \sin $ are Lipschitz continuous and consequently we get
\begin{eqnarray*}
\|\Box_k (e^{\i u}-1) \|_{L^p} & \le &  C_3 \, \|e^{\i u}-1 \|_{L^p} \\
	& \le &   C_3 \, (\|\cos  u - \cos 0 \|_{L^p} + \|\sin  u - \sin 0 \|_{L^p})
\\
& \le &   2\, C_3 \, \| u - 0 \|_{L^p} \, .
\end{eqnarray*}
This implies
\begin{align*}
\Big( \sum_{\substack{k\in\Z^n, \\ -Rr-1<k_i< Rr+1, \\ i=1,\ldots,n}} e^{|k|^{\frac{1}{s}}q} 
\| \Box_k (e^{\i u}-1) \|_{L^p}^q \Big)^{\frac{1}{q}} \hspace{7cm}
\end{align*}
\begin{eqnarray*}
& \le & 2\, C_3 \, \|\,  u \, \|_{L^p}\,  \Big( \sum_{\substack{k\in\Z^n, \\ -Rr-1<k_i< Rr+1, \\ i=1,\ldots,n}} e^{|k|^{\frac{1}{s}}q} 
\Big)^{\frac{1}{q}} \, .
\end{eqnarray*}
With a calculation similar to \eqref{ws-06}
we derive
\begin{eqnarray}\label{ws-10}
\sum_{\substack{k\in\Z^n, \\ -Rr-1<k_i< Rr+1, \\ i=1,\ldots,n}} e^{|k|^{\frac{1}{s}}q} 
& \le & \int_{\|\, x\, \|_\infty < Rr+1}\, e^{|x|^{\frac{1}{s}}q}  \, dx
\nonumber
\\
& \le & \int_{|x|< \sqrt{n}(Rr+1)}\, e^{|x|^{\frac{1}{s}}q}  \, dx
\nonumber
\\
&=&    2\, \frac{\pi^{n/2}}{\Gamma (n/2)}\,  \int_{0}^{\sqrt{n} (Rr+1)} \tau^{n-1} e^{ \tau^{\frac{1}{s}} q} \, d\tau
\nonumber
\\
& \le & 2\, \frac{\pi^{n/2}}{\Gamma (n/2)}\, \frac sq\,  (\sqrt{n}(Rr+1))^{n-\frac{1}{s}}\, 
e^{(\sqrt{n}(Rr+1))^{\frac{1}{s}}q} \, . \hspace{1cm}
\end{eqnarray}
To simplify notation we define
\[
C_4 := \sup_{r \in \N} \, \sup_{R \geq 2}\, \Big(2\, \frac{\pi^{n/2}}{\Gamma (n/2)}\, \frac sq\,  n^{(n-\frac{1}{s})/2}\,
 e^{-(\sqrt{n}(Rr+1))^{\frac{1}{s}}q}\Big)^{1/q}\, .
\]
In addition we shall use
\[
\| u \|_{L^p}\le C_5 \, \|u\|_{\mathcal{GM}_{p,q}^{s}} \, , \qquad C_5 := \Big(\sum_{k\in \Z^n} e^{-|k|^{1/s}q'}\Big)^{1/q'}
\]
which follows from H\"older's inequality.
Summarizing we have found
\[
\| e^{\i u}-1\|_{\mathcal{GM}_{p,q}^s} \le \Big(2 \, C_2 + 2\, C_5\, C_4\,C_3\,  
 e^{2(\sqrt{n} (Rr+1))^{\frac{1}{s}}}\Big)\,  \|u\|_{\mathcal{GM}_{p,q}^{s}},
\]
where the chosen constants depend on the dimension $n$, the weight parameter $s$ and the integrability parameters $p$ and $q$. 
Next we apply \eqref{ws-11} which results in
\begin{equation} \label{expest}
\| e^{\i u}-1 \|_{\mathcal{GM}_{p,q}^{s}} \leq c_0 
\|u\|_{\mathcal{GM}_{p,q}^{s}} \left( 1 + e^{b_0 R^{\frac{1}{s}} \| u\|_{\mathcal{GM}_{p,q}^{s}}^{\frac{1}{s}} } \right)\, , 
\end{equation}
valid for all $u\in \mathcal{GM}_{p,q}^{s}(\R^n)$ satisfying $\supp \F(u) \subset P_R$ and with positive constants $b_0,c_0$ depending on $n,s$ 
and $q$ but independent of $u, r$ and $R$. 
\\
{\em Step 2.}	The next step consists of choosing general $u\in \mathcal{GM}_{p,q}^{s}(\R^n)$. 
Here we need the restriction  $1< p< \infty$. For those $p$ the 
characteristic functions $\chi$ of cubes are Fourier multipliers in $L^p(\R^n)$ by the famous Riesz Theorem and therefore also in 
$\mathcal{GM}_{p,q}^{s}(\R^n)$.
In addition we shall make use of the fact that the norm of the operator
$f \mapsto \F^{-1} \chi\,  \F f$ does not depend on the size of the cube.
Below we shall denote this norm by $C_6 = C_6 (p)$. 
We refer to Lizorkin \cite{Li} for all details.
For decomposing $u$ on the phase 
space we introduce functions $\chi_{R,\epsilon}$ and $\chi_R$, that is, the characteristic functions of the sets $P_R(\epsilon)$ and $P_R$, 
respectively. By defining
	\begin{eqnarray*}
		u_\epsilon (x) & = & \F^{-1} [\chi_{R,\epsilon} (\xi) (\F u)(\xi) ] (x) , \\
		u_0 (x) & = & \F^{-1} [\chi_R (\xi) (\F u) (\xi) ] (x)
	\end{eqnarray*}
	we can rewrite $u$ as
	\begin{equation} \label{urepr}
		u(x) = u_0(x) + \sum_{\epsilon\in I} u_\epsilon (x) ,
	\end{equation}
	where $I$ is the set of all $\epsilon=(\epsilon_1,\ldots, \epsilon_n)$ with $\epsilon_j \in \{0,1\}$, $j=1,\ldots,n$. 
Hence
\begin{equation} \label{normrepr}
\| u\|_{\mathcal{GM}_{p,q}^{s}} \le  \| u_0\|_{\mathcal{GM}_{p,q}^{s}}  + 
\sum_{\epsilon\in I} \| u_\epsilon \|_{\mathcal{GM}_{p,q}^{s}}
\end{equation}
and 
\[
\max \Big(\| u_0\|_{\mathcal{GM}_{p,q}^{s}},  \| u_\epsilon \|_{\mathcal{GM}_{p,q}^{s}}\Big) \le C_6 \, \| \, u\,  \|_{\mathcal{GM}_{p,q}^{s}}\, .
\] 
 Due to the  representation (\ref{urepr}) and using an appropriate enumeration Lemma \ref{lma46brs} leads to
\[ 
e^{\i u} -1 = \sum_{l=1}^{2^n +1} \sum_{0\leq j_1 <\ldots < j_l \leq 2^n} 
(e^{\i u_{j_1}} -1)\cdot \ldots \cdot (e^{\i u_{j_l}} -1) \, .
\]
Corollary  \ref{algebra2} immediately yields
\begin{equation}\label{ws-12}
\|e^{\i u} -1\|_{\mathcal{GM}_{p,q}^{s}} \leq  \sum_{l=1}^{2^n +1} C^{l-1}_1\, 
\sum_{0\leq j_1 <\ldots < j_l \leq 2^n} \|e^{\i u_{j_1}} -1 \|_{\mathcal{GM}_{p,q}^{s}} 
\cdot \ldots \cdot \| e^{\i u_{j_l}} -1 \|_{\mathcal{GM}_{p,q}^{s}}. 
\end{equation}
By Corollary \ref{subalgebra}, \eqref{normrepr} and \eqref{expest} it follows
\begin{eqnarray}
\| e^{\i u_{j_k}} -1\|_{\mathcal{GM}_{p,q}^{s}} & = & \Big\|\sum_{l=1}^\infty \frac{(\i u_{j_k})^l}{l!} \, \Big\|_{\mathcal{GM}_{p,q}^{s}}
\le 	\frac{1}{F_R} \Big(e^{F_R\,  \|u_{j_k}\|_{\mathcal{GM}_{p,q}^{s}}} -1 \Big) 
\nonumber 
\\
& \leq & \frac{1}{F_R} \Big( e^{F_R\,C_6\,  \|u\|_{\mathcal{GM}_{p,q}^{s}}} -1 \Big), \label{est1} 
\end{eqnarray}
as well as
\begin{eqnarray}
\| e^{\i u_0} -1\|_{\mathcal{GM}_{p,q}^{s}} & \leq &  c_0 \, C_6\,  \|u\|_{\mathcal{GM}_{p,q}^{s}} 
\Big( 1 + e^{b_0 \, C_6^{\frac{1}{s}}\, R^{\frac{1}{s}} \| u\|_{\mathcal{GM}_{p,q}^{s}}^{\frac{1}{s}} } \Big)\, ,  \label{est2}
\end{eqnarray}
where we used the Fourier multiplier assertion mentioned at the beginning of this substep.
The final step in our  proof is to choose the number $R$ 
as a function of $\|u\|_{\mathcal{GM}_{p,q}^{s}}$ such that \eqref{est1} and \eqref{est2} will be  approximately 
of the same size. 
\\
{\em Substep 2.1.} Let  $\|u\|_{\mathcal{GM}_{p,q}^{s}} \leq 1$. We choose $R=3$. 
Then \eqref{ws-12} combined with \eqref{est1} and \eqref{est2} results in the estimate 
\begin{equation}\label{ws-13}
 \|e^{\i u} -1\|_{\mathcal{GM}_{p,q}^{s}} \leq C_7 \, \|u\|_{\mathcal{GM}_{p,q}^{s}} ,
\end{equation}
where $C_7$ does not depend on $u$.\\
{\em Substep 2.2.} Let $\|u\|_{\mathcal{GM}_{p,q}^{s}} >1$. 
As mentioned in Corollary \ref{subalgebra} we know that the algebra constant $F_R$ in \eqref{est1} 
is a function of $R$, i.e.,
\[
 F_R := C_1 \, \left(\int_{\delta q' (R-2)^{\frac{1}{s}}}^\infty y^{sn-1} e^{-y} \, dy\right)^{\frac{1}{q'}}\, .
\]
Taking into account that
	\begin{itemize}
		\item $F_R$ (as a function of $R$) is strictly decreasing and positive and
		\item $\displaystyle \lim_{R\to \infty} F_R = 0$
	\end{itemize}
we can easily set 
\[ 
\frac{F_R}{F_2}  = \|u\|_{\mathcal{GM}_{p,q}^{s}}^{\frac{1}{s}-1}
\]
for some $R>2$.
In view of Lemma \ref{lma45brs} this gives
\[ C_8 \, \Big( f(\delta q' (R-2)^{\frac{1}{s}}) \Big)^{\frac{1}{q'}} = \|u\|_{\mathcal{GM}_{p,q}^{s}}^{\frac{1}{s}-1} \]
with an appropriate positive constant $C_8$.
Thus, by Lemma \ref{lma45brs} it follows
\[ R =  2+ (\delta q')^{-s} \left( g \left(\frac{\|u\|_{\mathcal{GM}_{p,q}^{s}}^{q'(\frac{1}{s}-1)}}{C_8^{q'}} \right) \right)^s \]
and, moreover,
\[ R \leq C_9 + C_{10} \,\Big(q'-\frac{q'}{s} \Big)^s\,  \log^s \|u\|_{\mathcal{GM}_{p,q}^{s}} \, . \]
Note that the constants $C_9$ and $C_{10}$ are independent of $u$. 
Now \eqref{ws-12} combined with \eqref{est1} and \eqref{est2} results in  
\begin{eqnarray}\label{ws-14}
&& \|e^{\i u} -1\|_{\mathcal{GM}_{p,q}^{s}}
\nonumber 
\\
& \leq &  C_{11}\, \max_{\alpha \in \{0,1\}, \beta \in \{0, \ldots \, 2^n\}}
\, \bigg(
c_0 \, C_6\,  \|u\|_{\mathcal{GM}_{p,q}^{s}} 
\Big( 1 + e^{b_0 \, C_6^{\frac{1}{s}}\, R^{\frac{1}{s}} \| u\|_{\mathcal{GM}_{p,q}^{s}}^{\frac{1}{s}} } \Big) \bigg)^\alpha \nonumber
\\ && \qquad \qquad \qquad \times \, \Big( \frac{e^{F_R\,C_6\,  \|u\|_{\mathcal{GM}_{p,q}^{s}}} -1}{F_R} \Big)^\beta
\nonumber
\\
& \le &  C_{12}\, \max_{\alpha \in \{0,1\}, \beta \in \{0, \ldots \, 2^n\}}
\, \bigg(\|u\|_{\mathcal{GM}_{p,q}^{s}} 
\Big( 1 + e^{b_1 \,  \| u\|_{\mathcal{GM}_{p,q}^{s}}^{\frac{1}{s}} \, \log \| u\|_{\mathcal{GM}_{p,q}^{s}}} \Big) \bigg)^\alpha \nonumber
\\ && \qquad \qquad \qquad \times \, \| u\|_{\mathcal{GM}_{p,q}^{s}}^{\beta (1-\frac{1}{s})} \Big( \frac{e^{F_2\,C_6\,  \|u\|^{\frac{1}{s}}_{\mathcal{GM}_{p,q}^{s}}} -1}{F_2} \Big)^\beta
\nonumber
\\
&\le & C_{13}\, \|u\|_{\mathcal{GM}_{p,q}^{s}} \, \Big( 1 + 
e^{b \,  \| u\|_{\mathcal{GM}_{p,q}^{s}}^{\frac{1}{s}} \, \log \| u\|_{\mathcal{GM}_{p,q}^{s}}} \Big)   
\end{eqnarray}
with a constant $C_{13}$ independent of $u$.
\end{proof}

\begin{rem}
\rm
The restriction of $p$ to the interval $(1,\infty)$ is caused by  our decomposition technique, see Step 2 of the preceeding proof.
We do not know whether Lemma \ref{estSuper} extends to $p=1$ and/or $p = \infty$.
\end{rem}

\begin{lem} \label{ContExp}
Let $s>1$, $1< p< \infty$ and $1\leq q \le \infty$.
\\
{\rm (i)} The mapping $u \mapsto e^{iu}-1$
is locally Lipschitz continuous (considered as a mapping of $\mathcal{GM}_{p,q}^s(\R^n)$
into $\mathcal{GM}_{p,q}^s(\R^n)$). 
\\
{\rm (ii)} Assume $u\in \mathcal{GM}_{p,q}^s(\R^n)$ to be fixed and define a function 
$g: \R \mapsto \mathcal{GM}_{p,q}^s(\R^n)$ by $g(\xi) = e^{\i u(x) \xi}-1$. Then the function $g$ is continuous.
\end{lem}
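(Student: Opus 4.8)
The plan is to reduce both statements to the algebra property (Corollary~\ref{algebra2}) together with the quantitative bound of Lemma~\ref{estSuper}, exploiting the elementary factorization
\[
e^{\i u} - e^{\i v} = e^{\i v}\big(e^{\i(u-v)}-1\big) = \big(e^{\i(u-v)}-1\big) + \big(e^{\i v}-1\big)\big(e^{\i(u-v)}-1\big).
\]
The point of the second equality is that, although the multiplier $e^{\i v}$ itself does not belong to $\mathcal{GM}_{p,q}^s(\R^n)$, writing $e^{\i v} = 1 + (e^{\i v}-1)$ expresses the difference purely through factors that \emph{do} lie in the space, so that the multiplication estimate becomes applicable.

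For (i), I would fix $M>0$, take $\|u\|_{\mathcal{GM}_{p,q}^s}, \|v\|_{\mathcal{GM}_{p,q}^s} \le M$, and set $w := u-v$, so that $\|w\|_{\mathcal{GM}_{p,q}^s} \le 2M$. Applying Corollary~\ref{algebra2} to the displayed identity gives
\[
\|e^{\i u} - e^{\i v}\|_{\mathcal{GM}_{p,q}^s} \le \|e^{\i w}-1\|_{\mathcal{GM}_{p,q}^s} + C_1\, \|e^{\i v}-1\|_{\mathcal{GM}_{p,q}^s}\, \|e^{\i w}-1\|_{\mathcal{GM}_{p,q}^s}.
\]
By Lemma~\ref{estSuper}, on the bounded set $\{\|\cdot\|_{\mathcal{GM}_{p,q}^s} \le 2M\}$ the exponential factor $e^{b\|\cdot\|^{1/s}\log\|\cdot\|}$ is controlled by a constant depending only on $M$; hence there is $C(M)>0$ with $\|e^{\i w}-1\|_{\mathcal{GM}_{p,q}^s} \le C(M)\,\|w\|_{\mathcal{GM}_{p,q}^s}$ and $\|e^{\i v}-1\|_{\mathcal{GM}_{p,q}^s} \le C(M)\,M$. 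Inserting these bounds yields
\[
\|e^{\i u}-e^{\i v}\|_{\mathcal{GM}_{p,q}^s} \le \big(C(M) + C_1\, C(M)^2 M\big)\, \|u-v\|_{\mathcal{GM}_{p,q}^s},
\]
which is exactly the claimed local Lipschitz estimate.

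For (ii), I would fix $u$ and $\xi_0\in\R$, write $h := \xi-\xi_0$, and apply the same factorization with $v := u\xi_0$ and $w := uh$, obtaining
\[
g(\xi)-g(\xi_0) = \big(e^{\i u h}-1\big) + \big(e^{\i u \xi_0}-1\big)\big(e^{\i u h}-1\big).
\]
By Corollary~\ref{algebra2} this gives $\|g(\xi)-g(\xi_0)\|_{\mathcal{GM}_{p,q}^s} \le \big(1 + C_1\|e^{\i u \xi_0}-1\|_{\mathcal{GM}_{p,q}^s}\big)\, \|e^{\i u h}-1\|_{\mathcal{GM}_{p,q}^s}$. Since $\|uh\|_{\mathcal{GM}_{p,q}^s} = |h|\,\|u\|_{\mathcal{GM}_{p,q}^s}\to 0$ as $h\to 0$, the small-argument regime of Lemma~\ref{estSuper} applies for $|h|$ small enough and gives $\|e^{\i u h}-1\|_{\mathcal{GM}_{p,q}^s} \le c\,|h|\,\|u\|_{\mathcal{GM}_{p,q}^s}\to 0$, while $\|e^{\i u \xi_0}-1\|_{\mathcal{GM}_{p,q}^s}$ is a fixed finite number. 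Hence the right-hand side tends to $0$ and $g$ is continuous at $\xi_0$. Alternatively, (ii) is immediate from (i) by composing the locally Lipschitz map $w\mapsto e^{\i w}-1$ with the continuous (linear) map $\xi\mapsto u\xi$.

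I expect the only real care to be the bookkeeping of constants: one must verify that the exponential factor in Lemma~\ref{estSuper} remains bounded on the ball $\{\|\cdot\|_{\mathcal{GM}_{p,q}^s}\le 2M\}$, so that the resulting Lipschitz constant depends on $M$ alone, and one should note that the multiplication estimate is legitimately applied to the complex-valued factors $e^{\i v}-1$ and $e^{\i w}-1$, since the proof of Theorem~\ref{algebra} nowhere uses real-valuedness. No deeper obstacle is anticipated.
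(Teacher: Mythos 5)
Your proposal is correct and follows essentially the same route as the paper: the authors likewise base both parts on the identity $e^{\i u}-e^{\i v}=(e^{\i v}-1)(e^{\i(u-v)}-1)+(e^{\i(u-v)}-1)$, combined with the algebra property of Corollary~\ref{algebra2} and the quantitative bound of Lemma~\ref{estSuper}. Your version merely spells out the bookkeeping (boundedness of the exponential factor on a ball, applicability of the multiplication estimate to the complex-valued factors) that the paper leaves implicit.
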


\begin{proof}
Local Lipschitz continuity follows from the identity
\begin{equation}\label{ws-15}
e^{iu}- e^{iv} = (e^{iv}-1)\, (e^{i(u-v)}-1) + (e^{i(u-v)}-1)\, ,
\end{equation}
the algebra property of $\mathcal{GM}_{p,q}^s(\R^n)$ and Lemma \ref{estSuper}.
\\
To prove the continuity of $g$ we also employ the identity \eqref{ws-15}.
The claim follows by using the algebra property and Lemma \ref{estSuper}.
\end{proof}
	
Now we can establish the announced superposition result.

	\begin{thm} \label{Superposition}
		Let  $s>1$, $1< p < \infty$, $1\le q \le  \infty$ and $\mu$ be a complex measure on $\R$ such that
		\begin{equation} \label{FourierEst}
			L_1(\lambda) = \int_{\R} e^{\lambda (|\xi|^{\frac{1}{s}} \log |\xi| )} \, d|\mu| (\xi) < \infty
		\end{equation}
		for any $\lambda >0 $ and such that $\mu(\R) = 0$. \\
		Furthermore, assume that the function $f$ is the inverse Fourier transform of $\mu$. 
Then $f\in C^\infty$ and the composition operator $T_f: u \mapsto f \circ u$ maps $\mathcal{GM}_{p,q}^s$ into $\mathcal{GM}_{p,q}^s$.
	\end{thm}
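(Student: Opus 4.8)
The plan is to represent $f$ through the measure $\mu$ and to reduce the whole statement to the scalar estimate of Lemma \ref{estSuper}, applied to the dilated argument $\xi u$. Since $f=\F^{-1}\mu$, for $t\in\R$ we have $f(t)=(2\pi)^{-1/2}\int_\R e^{\i t\xi}\,d\mu(\xi)$. Because a complex measure has finite total variation and, by \eqref{FourierEst}, $|\xi|^k=e^{k\log|\xi|}\le C_{k,\lambda}\,e^{\lambda|\xi|^{1/s}\log|\xi|}$ for $|\xi|$ large and any fixed $\lambda>0$, every moment $\int_\R|\xi|^k\,d|\mu|(\xi)$ is finite. Hence differentiation under the integral sign is justified and yields $f\in C^\infty(\R)$; the assumption $\mu(\R)=0$ moreover gives $f(0)=0$.

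First I would rewrite the composition. For $u\in\mathcal{GM}_{p,q}^s(\R^n)$ the relation $\mu(\R)=0$ allows me to subtract the constant $1$ and to write
\[
f\circ u=(2\pi)^{-1/2}\int_\R\big(e^{\i\xi u}-1\big)\,d\mu(\xi),
\]
which I interpret as a Bochner integral of the $\mathcal{GM}_{p,q}^s$-valued map $\xi\mapsto e^{\i\xi u}-1$. Its continuity is precisely Lemma \ref{ContExp}(ii), and since the elements of $\mathcal{GM}_{p,q}^s(\R^n)$ are bounded continuous functions (Corollary \ref{gevrey}) the Bochner integral is identified pointwise with $x\mapsto f(u(x))$. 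Thus everything reduces to proving $\int_\R\|e^{\i\xi u}-1\|_{\mathcal{GM}_{p,q}^s}\,d|\mu|(\xi)<\infty$, after which the triangle inequality for Bochner integrals gives $f\circ u\in\mathcal{GM}_{p,q}^s$.

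To estimate the norm I apply Lemma \ref{estSuper} to $\xi u$, noting that $\|\xi u\|_{\mathcal{GM}_{p,q}^s}=|\xi|\,M$ with $M:=\|u\|_{\mathcal{GM}_{p,q}^s}$, and split the integral at $|\xi|=1/M$. For $|\xi|\le 1/M$ the linear branch gives $\|e^{\i\xi u}-1\|_{\mathcal{GM}_{p,q}^s}\le c\,|\xi|\,M$, which is integrable because $\int_\R|\xi|\,d|\mu|<\infty$. For $|\xi|>1/M$, expanding $(|\xi|M)^{1/s}\log(|\xi|M)=M^{1/s}|\xi|^{1/s}\log|\xi|+M^{1/s}(\log M)\,|\xi|^{1/s}$, the exponential branch yields
\[
\|e^{\i\xi u}-1\|_{\mathcal{GM}_{p,q}^s}\le c\,M\,|\xi|\;e^{\,b\,M^{1/s}|\xi|^{1/s}\log|\xi|}\;e^{\,b\,M^{1/s}(\log M)\,|\xi|^{1/s}}.
\]
Here the constants $b,c$ from Lemma \ref{estSuper} are independent of the argument, hence the same for all $\xi u$.

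The decisive point, which I expect to be the main obstacle, is to absorb the three factors on the right into a single weight of the form occurring in \eqref{FourierEst}. I would fix $\lambda:=b\,M^{1/s}+1$, strictly above the coefficient $b\,M^{1/s}$ of the leading term $|\xi|^{1/s}\log|\xi|$. Since $|\xi|^{1/s}\log|\xi|$ grows faster than both $\log|\xi|$ (controlling the polynomial factor $|\xi|$) and $|\xi|^{1/s}$ (controlling the subexponential factor carrying $\log M$), the surplus $(\lambda-b\,M^{1/s})\,|\xi|^{1/s}\log|\xi|=|\xi|^{1/s}\log|\xi|$ dominates these lower-order terms for large $|\xi|$. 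Consequently $\|e^{\i\xi u}-1\|_{\mathcal{GM}_{p,q}^s}\le C_u\,e^{\lambda|\xi|^{1/s}\log|\xi|}$ on the unbounded region, so the integral is bounded by $C_u\,L_1(\lambda)<\infty$ by \eqref{FourierEst}. This establishes the required convergence and, with it, the mapping property of $T_f$. The verification of $f\in C^\infty$ and the passage to a Bochner integral are comparatively routine; the genuine difficulty is the $M$-dependent choice of $\lambda$ together with the growth comparison that ties the weight $|\xi|^{1/s}\log|\xi|$ in \eqref{FourierEst} exactly to the bound of Lemma \ref{estSuper}.
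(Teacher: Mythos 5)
Your proposal is correct and follows essentially the same route as the paper: represent $f\circ u$ as a Bochner integral of $\xi\mapsto e^{\i\xi u}-1$ (justified by Lemma \ref{ContExp} and the finiteness of $|\mu|$), then bound $\int_\R\|e^{\i\xi u}-1\|_{\mathcal{GM}_{p,q}^s}\,d|\mu|(\xi)$ by splitting at $|\xi|\,\|u\|=1$ and invoking Lemma \ref{estSuper} together with \eqref{FourierEst}. The only differences are cosmetic: the paper additionally decomposes $\mu$ into $\mu_r^{\pm},\mu_i^{\pm}$ before integrating, while you are more explicit than the paper about the absorption step, i.e.\ the choice $\lambda=b\,M^{1/s}+1$ that dominates the polynomial factor and the $|\xi|^{1/s}\log M$ term.
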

	\begin{proof}
		Equation \eqref{FourierEst} yields $\int_{\R} d|\mu|(\xi) < \infty$. Thus, $\mu$ is a finite measure and $\mu(\R) =0$ 
makes sense. Now we define the inverse Fourier transform of $\mu$
		\[ f(t) = \frac{1}{\sqrt{2\pi}} \int_{\R} e^{\i \xi t} \, d \mu(\xi). \]
		Moreover, $\int_{\R} |(\i \xi)^j| \, d|\mu|(\xi) <\infty$ is deduced from equation \eqref{FourierEst} 
for all $j\in\N$. This gives $f\in C^\infty$ and due to $\mu(\R)=0$ we can also write $f$ as follows:
		\[ f(t) = \frac{1}{\sqrt{2\pi}} \int_{\R} (e^{\i \xi t}-1) \, d \mu(\xi). \]
		Since $\mu$ is a complex measure we can split it up into real part $\mu_r$ and imaginary part $\mu_i$, 
where each of them is a signed measure. Without loss of generality we proceed our computations only with the positive 
real measure $\mu_r^+$. For all measurable sets $E$ we have $\mu_r^+(E) \leq |\mu|(E)$. \\
		Let $u\in \mathcal{GM}_{p,q}^s(\R^n)$ and define the function $g(\xi) = e^{\i u(x) \xi} -1$ analogously to 
Lemma \ref{ContExp}. Then $g$ is Bochner integrable because of its continuity and taking into account that the measure $\mu_r^+$ is finite. 
Therefore we obtain the Bochner integral
		\[ \int_{\R} \big(e^{\i u(x) \xi} -1 \big)\, d\mu_r^+(\xi) = \int_{\R} g(\xi) \, d\mu_r^+(\xi) \]
		with values in $\mathcal{GM}_{p,q}^s(\R^n)$. By applying Minkowski inequality it follows
		\[ \Big\| \int_{\R} \big(e^{\i u(\cdot) \xi} -1 \big)\, d\mu_r^+(\xi) \Big\|_{\mathcal{GM}_{p,q}^s} 
\leq \int_{\R} \big\| e^{\i u(\cdot) \xi} -1 \big\|_{\mathcal{GM}_{p,q}^s} \, d|\mu|(\xi). \]
Using the abbreviation $\| u\|:= \|u\|_{\mathcal{GM}_{p,q}^s}$,	Lemma \ref{estSuper} together with equation \eqref{FourierEst} yields
		\begin{eqnarray*}
			\int_{|\xi| \|u\|\geq 1} \big\| e^{\i u(\cdot) \xi} -1 \big\|_{\mathcal{GM}_{p,q}^s} \, d|\mu|(\xi) & \leq & 
c\,  \int_{|\xi| \|u\|\geq 1} e^{b \|\xi u\|^{\frac{1}{s}}_{\mathcal{GM}_{p,q}^s}\log \|\xi u\|_{\mathcal{GM}_{p,q}^s} } \, d|\mu|(\xi) \\
				& < & \infty.
		\end{eqnarray*}
In this way also the remaining part of the integral $|\xi| \le 1/ \|u\|$ can be treated. \\
		The same estimates also hold for the measures $\mu_r^-$, $\mu_i^+$ and $\mu_i^-$. Thus, the result is obtained by
\begin{eqnarray*}
\|\sqrt{2\pi} f(u(x))\|_{\mathcal{GM}_{p,q}^s} & = & 
\Big\| \int_{\R} g(\xi) \, d\mu_r^+ - \int_{\R} g(\xi) \, d\mu_r^- \\
& & \qquad + \i \int_{\R} g(\xi) \, d\mu_i^+ - \i \int_{\R} g(\xi) \, d\mu_i^- \Big\|_{\mathcal{GM}_{p,q}^s} 
\\
& \leq &  \int_{\R} \| g(\xi) \|_{\mathcal{GM}_{p,q}^s} \, d|\mu_r^+|  + \int_{\R} \| g(\xi) \|_{\mathcal{GM}_{p,q}^s}\, d|\mu_r^-| \\
& & \qquad + \int_{\R} \| g(\xi)\|_{\mathcal{GM}_{p,q}^s} \, d|\mu_i^+| + \int_{\R} \| g(\xi) \|_{\mathcal{GM}_{p,q}^s} \, d|\mu_i^-|
\, ,
\end{eqnarray*}
where every integral on the right-hand side is finite. Thus, the statement is proved.
\end{proof}

\begin{rem}
 \rm
Theorem \ref{Superposition} is an extension of earlier results obtained for ${\mathcal{GM}^s_{2,2}}(\R^n)$ in \cite{brs}.
\end{rem}

For practical reasons we remark the following consequence.
\begin{cor} \label{SuperpositionCor}
Let the weight parameter $s>1$, $1 <p < \infty$, $1 \leq q \leq \infty$ and $\mu$ be a complex 
measure on $\R$ with the corresponding bounded density function $g$, i.e., $d\mu (\xi) = g(\xi) \, d\xi$. Suppose that
\begin{equation} \label{densCond}
\lim_{|\xi|\to \infty} \frac{|\xi|^{\frac{1}{s}} \log |\xi| }{\log |g(\xi)|} = 0
\end{equation}
and $\displaystyle \int_{\R} d\mu (\xi) = \int_{\R} g(\xi) \, d\xi = 0$. Assume the function $f$ to be the 
inverse Fourier transform of $g$. Then $f\in C^\infty$ and the composition operator $T_f: u \mapsto f \circ u$ 
maps $\mathcal{GM}_{p,q}^s$ into $\mathcal{GM}_{p,q}^s$.
\end{cor}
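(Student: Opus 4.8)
The plan is to deduce the corollary directly from Theorem \ref{Superposition}. Since $\mu(\R)=0$ is assumed outright, the only thing that needs to be verified is that the boundedness of $g$ together with the decay condition \eqref{densCond} forces the integrability hypothesis \eqref{FourierEst}, i.e. that
\[
L_1(\lambda)=\int_{\R} e^{\lambda(|\xi|^{\frac1s}\log|\xi|)}\,|g(\xi)|\,d\xi<\infty
\]
for every $\lambda>0$. Once this is in place, Theorem \ref{Superposition} applies verbatim and yields both $f\in C^\infty$ and the mapping property of $T_f$. Note also that, $\mu$ being a complex (hence finite) measure, $g\in L^1(\R)$.

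First I would turn the limiting relation \eqref{densCond} into a pointwise decay estimate for $g$. Fix $\lambda>0$ and set $\epsilon:=\frac{1}{2\lambda}$. By \eqref{densCond} there is an $M>1$ with
\[
\frac{|\xi|^{\frac1s}\log|\xi|}{|\log|g(\xi)||}<\epsilon \qquad \text{whenever } |\xi|>M,
\]
hence $|\log|g(\xi)||>2\lambda\,|\xi|^{\frac1s}\log|\xi|$ for such $\xi$. Here the hypothesis that $g$ is \emph{bounded} is essential: writing $|g(\xi)|\le B$, the branch $\log|g(\xi)|>0$ large is impossible as soon as $2\lambda|\xi|^{\frac1s}\log|\xi|>\log B$, so for all sufficiently large $|\xi|$ one must have $\log|g(\xi)|<0$ and therefore
\[
|g(\xi)|\le e^{-2\lambda\,|\xi|^{\frac1s}\log|\xi|}.
\]

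Next I would split $L_1(\lambda)$ into the contributions of $\{|\xi|\le M'\}$ and $\{|\xi|>M'\}$, where $M'\ge M$ is chosen so large that the pointwise bound above holds for $|\xi|>M'$. On the bounded region the weight $e^{\lambda(|\xi|^{\frac1s}\log|\xi|)}$ is bounded by a constant (observe $|\xi|^{\frac1s}\log|\xi|\to0$ as $\xi\to0$, and the expression is continuous and bounded on the remaining compact range), so this part is controlled by a multiple of $\int_{\R}|g|\,d\xi<\infty$. On the unbounded region the pointwise bound gives
\[
\int_{|\xi|>M'} e^{\lambda(|\xi|^{\frac1s}\log|\xi|)}\,|g(\xi)|\,d\xi\le\int_{|\xi|>M'} e^{-\lambda\,|\xi|^{\frac1s}\log|\xi|}\,d\xi,
\]
and the right-hand side is finite because for $|\xi|>M'>e$ we have $\log|\xi|>1$, whence $|\xi|^{\frac1s}\log|\xi|>|\xi|^{\frac1s}$ and the integrand is dominated by the integrable $e^{-\lambda|\xi|^{\frac1s}}$. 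Adding the two contributions yields $L_1(\lambda)<\infty$ for every $\lambda>0$, and an appeal to Theorem \ref{Superposition} completes the argument.

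The only delicate step is the second paragraph: extracting genuine exponential decay of $|g|$ from the limit \eqref{densCond}, where one must pin down the sign of $\log|g(\xi)|$ and rule out the spurious branch in which $|g(\xi)|$ is large — which is exactly where the boundedness of $g$ is used. Everything else is a routine splitting of the integral followed by the reduction to Theorem \ref{Superposition}.
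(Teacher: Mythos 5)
Your argument is correct and follows essentially the same route as the paper's own proof: both reduce the corollary to Theorem \ref{Superposition} by converting the limit \eqref{densCond} into the pointwise bound $|g(\xi)|\le e^{-2\lambda|\xi|^{1/s}\log|\xi|}$ for large $|\xi|$ (using boundedness of $g$ to fix the sign of $\log|g(\xi)|$) and then estimating the tail of $L_1(\lambda)$. You are merely a bit more explicit than the paper about ruling out the positive branch of $\log|g(\xi)|$ and about the contribution of the compact region, which is a welcome but inessential refinement.
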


\begin{proof}
		Most of the work has been done in the proof of Theorem \ref{Superposition}. 
		The condition \eqref{densCond} yields that the modulus of $\displaystyle \lim_{|\xi|\to\infty} \log |g(\xi)|$ 
needs to be infinity. This fact together with the boundedness of $g$ implies $\displaystyle \lim_{|\xi|\to\infty} g(\xi)=0$. 
Moreover, for any $\lambda >0$  there exists a number $N>0$ such that
		\[ - \frac{|\xi|^{\frac{1}{s}} \log |\xi| }{\log |g(\xi)|} \leq \frac{1}{2\lambda} \,, \qquad |\xi|>N\,  .\]
		Thus, we obtain
		\[ |g(\xi)| \leq e^{-2\lambda |\xi|^{\frac{1}{s}} \log |\xi| } \,, \qquad |\xi|>N\, ,\]
		and it follows
		\begin{eqnarray*}
\int_{|\xi|>N} e^{\lambda(|\xi|^{\frac{1}{s}} \log |\xi| )} \, d|\mu|(\xi) & = &  
\int_{|\xi|>N} e^{\lambda(|\xi|^{\frac{1}{s}} \log |\xi| )} |g(\xi)| \, d\xi 
\\
&&  \leq  \int_{|\xi|>N} e^{-\lambda(|\xi|^{\frac{1}{s}} \log |\xi|)} \, d\xi 	<  \infty.
		\end{eqnarray*}
		This completes the proof.
	\end{proof}

\subsection*{One  Example} 

We recall a construction considered in Rodino \cite[Example 1.4.9]{Ro}.
Let $\mu <0$ and let for $t \in \R$
\[
\psi_\mu (t):= \left\{ \begin{array}{lll}
e^{-t^\mu} & \qquad & \mbox{if} \quad t>0\, ;
\\
0 && \mbox{otherwise}\, .
                   \end{array} \right.
\]
By taking
\[
\phi_\mu (t):= \psi_\mu (1-t)\, \cdot \psi_\mu (t)\, , \qquad t\in \R\, ,
\]
we obtain a compactly supported $C^\infty$ function on $\R$.
It follows $\varphi_\mu \in G^s (\R)$ for $s= 1-1/\mu$. Here the classes $G^s(\R)$ refer to classical Gevrey regularity, see \cite[Definition 1.4.1]{Ro}.
We skip the definition here and recall a further result, see \cite[Thm. 1.6.1]{Ro}.
Since $\varphi_\mu \in G^s (\R)$  has compact support there exists a positive constant $c$ and some $\varepsilon >0$ such that
\[
|\F \varphi_\mu (\xi)| \le c \, e^{-\varepsilon \, |\xi|^{1/s}}\, , \quad \xi \in \R\, .
\]
Because of $\phi_\mu (0) =0$ Cor.  \ref{SuperpositionCor} yields the following.

\begin{cor} \label{ex1}
Let the weight parameter $s>1$, $1 <p < \infty$ and  $1 \leq q \leq \infty$.  Suppose that $\mu < \frac 1{1-s}$. 
 Then  $T_{\varphi_\mu}: u \mapsto \varphi_\mu \circ u$ maps $\mathcal{GM}_{p,q}^s$ into $\mathcal{GM}_{p,q}^s$.
\end{cor}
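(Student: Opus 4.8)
The plan is to apply Corollary \ref{SuperpositionCor} directly, letting the function $f$ of that corollary be $\varphi_\mu$ and the density $g$ be $\F \varphi_\mu$. First I would collect the ingredients already assembled in the discussion above. Since $\varphi_\mu \in G^{s_0}(\R)$ with $s_0 := 1 - 1/\mu$ and $\varphi_\mu$ has compact support, the quoted Gevrey estimate \cite[Thm.~1.6.1]{Ro} furnishes constants $c,\varepsilon>0$ with $|\F\varphi_\mu(\xi)| \le c\, e^{-\varepsilon|\xi|^{1/s_0}}$. Setting $g := \F\varphi_\mu$ we have $\varphi_\mu = \F^{-1}g$, so $\varphi_\mu$ is the inverse Fourier transform of $g$ as the corollary demands; moreover $g$ is bounded because $\varphi_\mu \in L^1(\R)$, and the normalization $\int_\R g(\xi)\,d\xi = (2\pi)^{1/2}\varphi_\mu(0) = 0$ follows from $\varphi_\mu(0) = \phi_\mu(0) = 0$.

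The only genuine thing to verify is the decay hypothesis \eqref{densCond}, that is $\lim_{|\xi|\to\infty} |\xi|^{1/s}\log|\xi| / \log|g(\xi)| = 0$. Taking logarithms in the pointwise bound gives $\log|g(\xi)| \le \log c - \varepsilon|\xi|^{1/s_0}$, so for $|\xi|$ large the denominator has magnitude at least $\tfrac{\varepsilon}{2}|\xi|^{1/s_0}$; hence the modulus of the quotient is dominated by $\tfrac{2}{\varepsilon}\,|\xi|^{1/s - 1/s_0}\log|\xi|$. This tends to $0$ precisely when $1/s - 1/s_0 < 0$, i.e.\ when $s_0 < s$. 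Note that only the upper bound on $|g|$ is needed here, since possible zeros of $g$ only make the quotient smaller; no lower estimate on $\F\varphi_\mu$ is required.

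It then remains to translate $s_0 < s$ into the stated constraint on $\mu$. From $s_0 = 1 - 1/\mu$ one has $s_0 < s \iff -1/\mu < s-1 \iff 1/\mu > 1-s$, and since both $1/\mu$ and $1-s$ are negative (recall $\mu<0$ and $s>1$), passing to reciprocals reverses the inequality and yields $\mu < 1/(1-s)$, which is exactly the hypothesis. With \eqref{densCond} verified and $\int_\R g = 0$ in hand, Corollary \ref{SuperpositionCor} gives $\varphi_\mu \in C^\infty$ and that $T_{\varphi_\mu}$ maps $\mathcal{GM}_{p,q}^s$ into itself.

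I do not expect a real obstacle: every analytic input (the Gevrey--Fourier decay, the support and the vanishing of $\varphi_\mu$ at the origin) is already provided, and the argument reduces to the elementary comparison of exponents above. The one point deserving care is making the equivalence $\mu < 1/(1-s) \iff s_0 < s$ rigorous, since inverting the inequality hinges on the signs of $1/\mu$ and $1-s$; I would record this sign bookkeeping explicitly rather than leave it implicit, and I would also note that strict inequality is essential, as the borderline $s_0 = s$ leaves a surviving logarithmic factor and violates \eqref{densCond}.
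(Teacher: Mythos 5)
Your proposal is correct and follows exactly the route the paper intends: the paper's own justification is the one-line remark that $\phi_\mu(0)=0$ together with the already-quoted Gevrey--Fourier decay $|\F\varphi_\mu(\xi)|\le c\,e^{-\varepsilon|\xi|^{1/s_0}}$, $s_0=1-1/\mu$, feeds into Corollary \ref{SuperpositionCor}. You have merely made explicit the verification of \eqref{densCond} and the sign bookkeeping showing $s_0<s \iff \mu<1/(1-s)$, both of which are carried out correctly.
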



	\section{A Non-analytic Superposition Result on Special Modulation Spaces} 
\label{Ultradifferentiablesuperposition}


In the previous sections the results in \cite{brs} gave the motivation to use weights of Gevrey type. Now we want to leave the Gevrey frame and 
approach weights of Sobolev type. For brevity we put
\[
\langle x \rangle_{*}:= (e^{2e} + |x|^2)^{1/2}\, , \qquad x \in \R^n\, . 
\]
Let 
\[
w(x):= e^{\log \langle x \rangle_* \, \log\log \langle x \rangle_*}\, , \qquad x \in \R^n\, . 
\]
By means of our normalization we have $\log\log \langle x \rangle_* \ge 1$.
Clearly, in case $t>0$ and $s>1$, there exist positive constants $A_t$ and $B_s$ such that
\[
A_{t}\, \langle x \rangle^t_* \le w(x) \le B_s\,  e^{|x|^{1/s}}\, , \qquad x \in \R^n\, . 
\]
The spaces defined by such a weight may serve as a prototype for weighted modulation spaces  where the weight 
is increasing stronger than any polynomial but weaker than $e^{|x|^{1/s}}$.

\begin{defn}
		The integrability parameters are given by $1\leq p,q \leq \infty$. Let $\rho\in C_0^\infty (\R^n)$ be a 
fixed window. Then the modulation space $\mathcal{U M}^{p,q}(\R^n)$ is the collection of all $f \in L^p (\R^n)$ such that
\[ \|f\|_{\mathcal{U M}^{p,q}} := \Big( \sum_{k\in\Z^n} e^{q \log \langle k \rangle_* \log\log \langle k \rangle_*}
\|\F^{-1} (\Box_k \F f)\|_{L^p}^q \Big)^{\frac{1}{q}} < \infty  \]
		with obvious modifications when $p=\infty$ and/or $q=\infty$.
	\end{defn}
	\begin{rem}
		In fact, the modulation spaces $\mathcal{U M}^{p,q}(\R^n)$ contain all Gevrey-modulation 
spaces $\mathcal{GM}_{p,q}^s(\R^n)$ but are contained in every classical modulation space $M_{p,q}^s(\R^n)$, i.e.
		\[ \bigcup_{s>1} \mathcal{GM}_{p,q}^s(\R^n) \subset \quad  \mathcal{U M}^{p,q}(\R^n) \quad \subset \bigcap_{s\in\R} M_{p,q}^s(\R^n). \]
	\end{rem}

By the same arguments as in Lemma \ref{basic} one can prove the following statements.

\begin{lem}\label{basic2}
(i) The modulation space $\mathcal{UM}^{p,q}(\R^n)$ is a Banach space.
\\
(ii) $\mathcal{UM}^{p,q}(\R^n)$ is independent of the choice of the window $\rho\in C_0^\infty (\R^n)$ in the sense of equivalent norms.
\\
(iii) $\mathcal{UM}^{p,q}(\R^n)$ has the Fatou property, i.e., if $(f_m)_{m=1}^\infty \subset \mathcal{UM}^{p,q}(\R^n)$ 
is a sequence such that 
$ f_m \rightharpoonup f $ (weak convergence in $S'(\R^n)$) and 
\[
 \sup_{m \in \N}\,  \|\, f_m \, \|_{\mathcal{UM}^{p,q}} < \infty\, , 
\]
then $f \in \mathcal{UM}^{p,q}(\R^n)$ follows and 
\[
\|\, f \, \|_{\mathcal{UM}^{p,q}} \le \sup_{m\in \N}\,  \|\, f_m \, \|_{\mathcal{UM}^{p,q}} < \infty\, .
\]
\end{lem}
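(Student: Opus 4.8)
The plan is to transcribe the proof of Lemma~\ref{basic}, the only structural change being that the Gevrey weight $e^{|k|^{1/s}}$ is replaced by the sequence $w(k):=e^{\log\langle k\rangle_*\,\log\log\langle k\rangle_*}$, $k\in\Z^n$. For the bookkeeping it suffices to isolate three elementary features of this weight: it is positive and, by the normalisation $\log\log\langle k\rangle_*\ge 1$, satisfies $w(k)\ge 1$ for all $k$; it is radially nondecreasing; and it is \emph{moderate}, i.e.\ for each fixed $N_0$ there is a constant $C$ with $w(k)\le C\,w(l)$ whenever $\|k-l\|_\infty\le N_0$. Since each $w(k)$ is a finite constant $\ge 1$, the inclusion $\mathcal{UM}^{p,q}(\R^n)\subset\bigcap_{s\in\R}M_{p,q}^{s}(\R^n)$ recorded in the remark above shows in particular that $\mathcal{UM}^{p,q}(\R^n)$ embeds continuously into $\S'(\R^n)$. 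I would establish (iii) first, deduce (i) from it, and treat (ii) by a separate comparison argument.

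For (iii) I would repeat the argument of Lemma~\ref{basic} verbatim. From $f_m\rightharpoonup f$ in $\S'(\R^n)$ one obtains, for every fixed $k\in\Z^n$ and every $x\in\R^n$, the pointwise convergence $\F^{-1}[\sigma_k\F f_m](x)\to\F^{-1}[\sigma_k\F f](x)$, since this quantity is, up to a fixed constant, the value of $f_m$ tested against the Schwartz function $\F^{-1}\sigma_k(x-\cdot)$, exactly as in Lemma~\ref{basic}. Fatou's lemma applied to the finite partial sums then yields, for every $N$,
\[
\sum_{\|k\|_\infty\le N} w(k)^q\Big(\int_{\R^n}|\F^{-1}[\sigma_k\F f](x)|^p\,dx\Big)^{\frac qp}\le\liminf_{m\to\infty}\sum_{\|k\|_\infty\le N} w(k)^q\,\|\Box_k f_m\|_{L^p}^q\le\sup_{m\in\N}\|f_m\|_{\mathcal{UM}^{p,q}}^q,
\]
and letting $N\to\infty$ by monotone convergence gives $f\in\mathcal{UM}^{p,q}(\R^n)$ together with the asserted norm bound. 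The cases $p=\infty$ and/or $q=\infty$ are handled by the obvious replacement of the corresponding integral or sum by an essential supremum or supremum.

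For (i) only completeness requires comment, and I would derive it from (iii) in the standard fashion. A Cauchy sequence $(f_m)$ in $\mathcal{UM}^{p,q}(\R^n)$ is Cauchy in $\S'(\R^n)$ by the continuous embedding noted above, hence converges in $\S'(\R^n)$ to some $f$. Applying the Fatou property to the sequence $(f_m-f_{m'})_{m'}$ for fixed $m$, which converges in $\S'(\R^n)$ to $f_m-f$, gives $\|f_m-f\|_{\mathcal{UM}^{p,q}}\le\liminf_{m'\to\infty}\|f_m-f_{m'}\|_{\mathcal{UM}^{p,q}}$; the right-hand side tends to $0$ as $m\to\infty$ by the Cauchy property. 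Hence $f\in\mathcal{UM}^{p,q}(\R^n)$ and $f_m\to f$ in norm, so the space is complete.

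For (ii) I would compare two admissible windows $\rho,\tilde\rho\in C_0^\infty(\R^n)$ with associated operators $\Box_k$ and $\tilde\Box_l$. Because the supports of $\sigma_k$ and $\tilde\sigma_l$ overlap only for $l$ in a fixed finite neighbourhood $\|k-l\|_\infty\le N_0$ of $k$ (with $N_0$ depending only on the two windows), one has $\Box_k f=\sum_{\|k-l\|_\infty\le N_0}\Box_k\tilde\Box_l f$, where each summand equals $(2\pi)^{-n/2}\,\F^{-1}\sigma_k*(\tilde\Box_l f)$. Young's inequality together with $\|\F^{-1}\sigma_k\|_{L^1}=\|\F^{-1}\sigma_0\|_{L^1}$ then gives $\|\Box_k f\|_{L^p}\lesssim\sum_{\|k-l\|_\infty\le N_0}\|\tilde\Box_l f\|_{L^p}$ with a constant uniform in $k$, exactly as in the proof of Theorem~\ref{algebra}. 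Inserting the weight, using moderateness $w(k)\asymp w(l)$ for $\|k-l\|_\infty\le N_0$, and applying the finite triangle inequality in $\ell^q$ yields $\|f\|_{\mathcal{UM}^{p,q}}\lesssim\|f\|_{\mathcal{UM}^{p,q}(\tilde\rho)}$, the reverse inequality following by symmetry. The one point genuinely special to the present weight is the moderateness estimate, which I regard as the main (though mild) obstacle: writing $a=\log\langle k\rangle_*$ and $b=\log\langle l\rangle_*$, the exponent of $w$ is $a\log a$, and for $\|k-l\|_\infty\le N_0$ one has $|a-b|\lesssim\langle k\rangle^{-1}$, so the difference of exponents satisfies $|a\log a-b\log b|\lesssim|a-b|(1+\log a)\to 0$; thus $w(k)/w(l)$ stays bounded. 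Everything else is a direct transcription of Lemma~\ref{basic} and of standard modulation-space technique.
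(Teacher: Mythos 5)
Your proposal is correct and follows essentially the same route as the paper, which simply invokes the argument of Lemma \ref{basic}: the Fatou property via testing against $\F^{-1}\sigma_k(x-\cdot)$, Fatou's lemma, and monotone convergence, with the only new ingredient being the (correctly verified) moderateness of the weight $e^{\log\langle k\rangle_*\log\log\langle k\rangle_*}$. You additionally write out the standard completeness and window-independence arguments that the paper leaves implicit, and these are fine.
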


In order to prove  that $\mathcal{U M}^{p,q}$ is an algebra under pointwise multiplication we need a counterpart of Lemma \ref{weightEstimate}.
Therefore we start with some elementary analysis.
Let 
\[
w_*(t):= \log \langle t \rangle_* \, \log\log \langle t \rangle_*\, , \qquad t \in [0,\infty)\, . 
\]
This function is strongly increasing and its range is given by $[e,\infty)$. 
To proceed as in Section \ref{Gevreysuperposition} we need to prove a counterpart of Lemma \ref{weightEstimate}.

\begin{lem}\label{numer}
There exists a positive real number $s\in (0,1)$ such that
\begin{equation}\label{ws-20}
w_* (x) \le w_* (y) + w_* (x-y) - s \, \min (w_* (y), w_* (x-y))
\end{equation}
holds for all $x,y  \ge 0$.
\end{lem}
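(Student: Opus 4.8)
The plan is to reduce the two--variable inequality \eqref{ws-20} to two scalar facts about the single profile $t\mapsto w_*(t)$ and then to assemble them. Since $x=y+(x-y)$, the quantities $w_*(y)$ and $w_*(x-y)$ enter symmetrically, so it suffices to prove
\[
w_*(a+b)\le w_*(a)+w_*(b)-s\,\min\bigl(w_*(a),w_*(b)\bigr)\qquad(a,b\ge0),
\]
which is \eqref{ws-20} upon setting $a=y$, $b=x-y$ (the case $x<y$ being trivial, as $w_*$ is increasing and $s\le1$). By symmetry I may assume $a\le b$, so that $\min(w_*(a),w_*(b))=w_*(a)$ and $a+b\ge 2a$.

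The whole argument rests on the auxiliary function $g(t):=w_*(t)/t$ on $(0,\infty)$, for which I would establish two quantitative properties. \emph{(a) $g$ is strictly decreasing.} Writing $\ell:=\log\langle t\rangle_*$ one computes $w_*'(t)=\frac{t}{e^{2e}+t^2}\bigl(\log\log\langle t\rangle_*+1\bigr)$, whence, using $\frac{t^2}{e^{2e}+t^2}<1$ and the normalization $\ell\ge e$ in $\ell\log\ell$,
\[
t\,w_*'(t)-w_*(t)<\log\ell+1-\ell\log\ell=1+\log\ell\,(1-\ell)\le 2-e<0,
\]
so $(w_*/t)'<0$. \emph{(b) The doubling ratio stays below $2$}, i.e. $\kappa:=\sup_{t>0}\frac{w_*(2t)}{w_*(t)}<2$. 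The pointwise bound $w_*(2t)<2w_*(t)$ follows from $\langle 2t\rangle_*\le 2\langle t\rangle_*$ (hence $\log\langle 2t\rangle_*\le\log\langle t\rangle_*+\log2$) together with monotonicity of $r\mapsto r\log r$, which reduces the claim to $h(a):=2a\log a-(a+\log2)\log(a+\log2)>0$ for $a\ge e$; this is verified from $h(e)>0$ and $h'(a)=\log\frac{a^2}{a+\log2}+1>0$. Finally, since $\frac{w_*(2t)}{w_*(t)}$ is continuous on $[0,\infty)$ and tends to $1$ both as $t\to0^+$ and as $t\to\infty$, its supremum is attained and is therefore strictly below $2$.

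With these two facts the conclusion is immediate. As $g$ is decreasing and $a+b\ge 2a$,
\[
g(a+b)\le g(2a)=\frac{w_*(2a)}{2a}\le\frac{\kappa}{2}\,\frac{w_*(a)}{a}=\frac{\kappa}{2}\,g(a),
\]
so $g(a)-g(a+b)\ge\bigl(1-\tfrac{\kappa}{2}\bigr)g(a)$. Splitting the subadditivity defect and discarding the nonnegative $b$--term,
\[
w_*(a)+w_*(b)-w_*(a+b)=a\bigl(g(a)-g(a+b)\bigr)+b\bigl(g(b)-g(a+b)\bigr)\ge a\bigl(g(a)-g(a+b)\bigr),
\]
and the right-hand side is at least $\bigl(1-\tfrac{\kappa}{2}\bigr)a\,g(a)=\bigl(1-\tfrac{\kappa}{2}\bigr)w_*(a)$. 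This is the desired inequality with $s:=1-\kappa/2\in(0,1)$.

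The main obstacle is precisely the two quantitative estimates (a) and (b). Because $w_*$ is convex for $t<e^{e}$ and only concave beyond, none of the concavity shortcuts available for the power weight in Lemma \ref{weightEstimate} apply here, and one is forced to use the explicit form of $w_*$; in both (a) and (b) it is exactly the normalization $\log\langle t\rangle_*\ge e$ that makes $t\,w_*'(t)-w_*(t)$ and $h(a)$ have the right sign. Once these are in place, the reduction and the final two lines are purely formal.
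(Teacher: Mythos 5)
Your proof is correct, and it takes a genuinely different---and considerably leaner---route than the paper's. After the same trivial reductions (the case $x\le y$; then $a=y$, $b=x-y$ with $a\le b$ so that the minimum is $w_*(a)$), the paper runs a six-region case analysis in the $(x,y)$-plane organized around the inflection point $t_0\approx 16.445$ of $w_*$: it looks for critical points of $h(x,y)=w_*(x)-w_*(y)-(1-s)w_*(x-y)$ via $\grad h=0$, pushes the maximum to the boundary of each region, and controls everything through the constant $p_0=\sup_{t>0}t\,w_*'(t)/w_*(t)$, with the monotonicity of $q(t)=t/w_*(t)$, the location of $t_0$ and the value of $p_0$ all read off from plots. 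You replace all of this by two analytic facts: that $g(t)=w_*(t)/t$ is decreasing (this is exactly the paper's claim that $q=1/g$ is increasing, which the paper justifies only by Figure~3, whereas your computation $t\,w_*'(t)-w_*(t)<1+\log\ell-\ell\log\ell\le 2-e<0$ for $\ell=\log\langle t\rangle_*\ge e$ actually proves it), and that the doubling constant $\kappa=\sup_{t>0}w_*(2t)/w_*(t)$ is strictly less than $2$, which you reduce to the elementary positivity of $h(a)=2a\log a-(a+\log 2)\log(a+\log 2)$ on $[e,\infty)$ plus a compactness argument. The identity $w_*(a)+w_*(b)-w_*(a+b)=a\bigl(g(a)-g(a+b)\bigr)+b\bigl(g(b)-g(a+b)\bigr)$ then closes the argument with $s=1-\kappa/2$. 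What your approach buys is a plot-free, case-free proof that never needs the inflection point; what the paper's buys is the larger explicit constant $s=1-p_0\approx 0.59$ (your $\kappa$ is roughly $1.3$--$1.4$, so $s\approx 0.3$), which is immaterial since the lemma only asserts existence of some $s\in(0,1)$. Two microscopic points to tidy up: the edge case $a=0$, where $g(a)$ is undefined, should be dispatched separately (it is trivial, since the inequality reduces to $0\le(1-s)w_*(0)$), and your closing remark that $w_*$ is convex precisely for $t<e^e$ is slightly inaccurate (the sign change of $w_*''$ is near $t\approx 16.44$, not $e^e\approx 15.15$), though nothing in the proof depends on it.
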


\begin{proof}
{\em Step 1.} Preliminaries. We need some auxiliary functions and their basic properties. Obviously we have
\[ w'_* (t) = \frac{t}{\langle t \rangle_*^2} ( 1+ \log\log \langle t\rangle_*)\, , \qquad t >0\, ,  \]
and
\[ w_*'' (t) = \frac{1}{\langle t\rangle_*^2} (1 + \log\log \langle t\rangle_*) + 
\frac{t^2}{\langle t\rangle_*^4} \Big( \frac{1}{\log \langle t\rangle_*} - 2 - 2\log\log \langle t\rangle_* \Big)\, , \qquad t >0. 
\]
{~}\\
\begin{minipage}[u]{7cm}
\includegraphics[height=6cm]{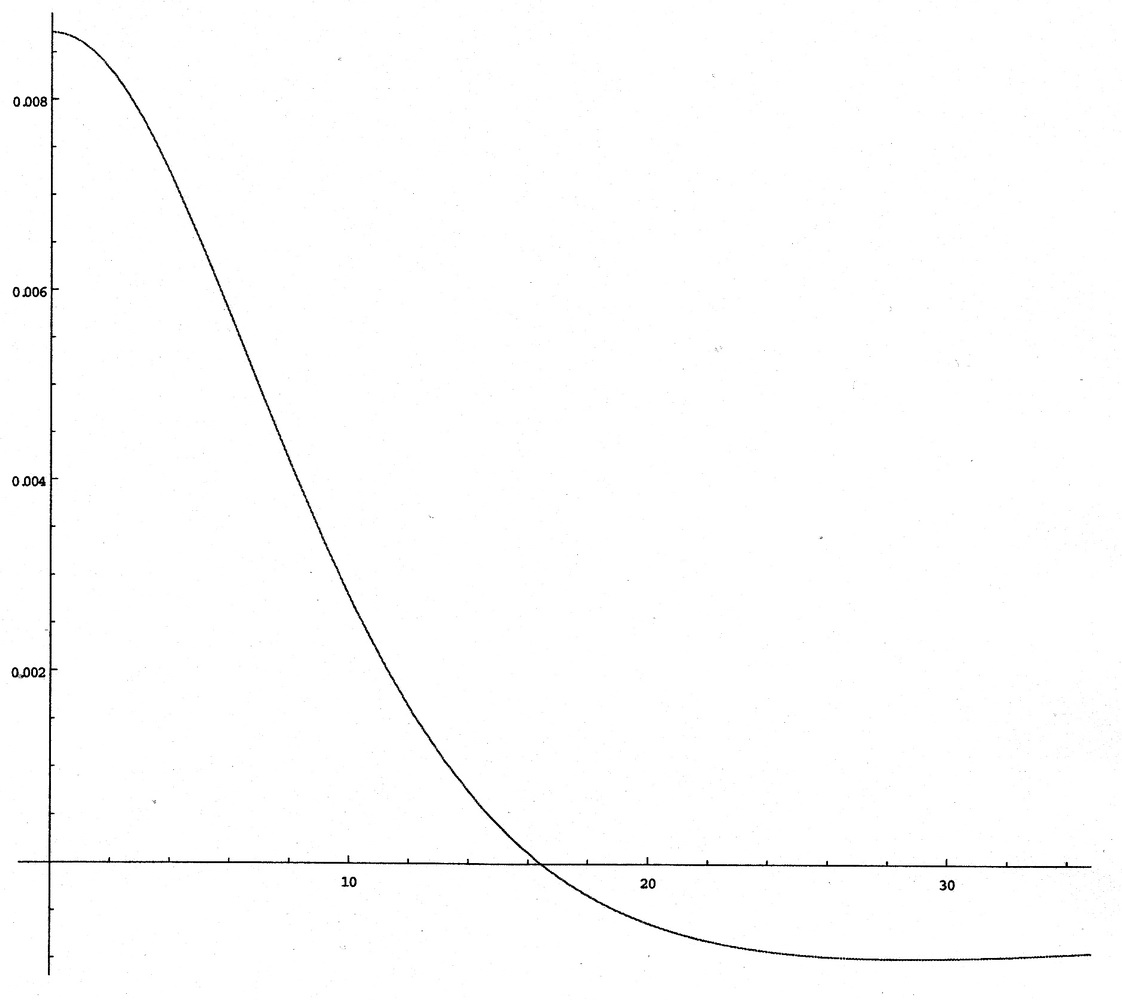}
\\
\centerline{Figure 1}
\end{minipage}
 \hfill
\begin{minipage}[u]{4.5cm}
The plot of $w_*'' $, see Figure 1, makes clear that  there exists a positive number $t_0$ such that
\begin{eqnarray*}
	w_*'' (t_0) & = & 0, \\
	w_*'' (t) & < & 0 \quad \mbox{ for } t>t_0, \\
	w_*'' (t) & > & 0 \quad \mbox{ for } 0 < t < t_0.
\end{eqnarray*}
A closer look shows that $t_0 \in (16.4449, 16.4451) $.
Thus, $w'_*$ has it's global maximum at $t=t_0$. 
\end{minipage}
\\
{~}\\
{~}\\
\noindent
Later on we shall need the  following function
\begin{eqnarray*}
p(t) & := &  \frac{t}{w_*(t)} w'_*(t) 
\\
& = &
\frac{t}{\log \langle t\rangle_* \log\log \langle t\rangle_*}  \frac{t}{\langle t \rangle_*^2} ( 1+ \log\log \langle t\rangle_*) \\
	& = & \frac{t^2}{\langle t \rangle_*^2} \frac{1+ \log\log \langle t\rangle_*}{\log  \langle t\rangle_* \log\log  \langle t\rangle_*}.
\end{eqnarray*}
Clearly, $p_0 := \sup_{t>0} p(t) < 1$. Figure 2 shows that 
$p_0 \in (0,410247, 0,410248)$.
In addition we need  
\[ q(t) := \frac{t}{w_*(t)}\, , \qquad t>0\, . \]
\begin{minipage}[u]{6cm}
\includegraphics[height=5cm]{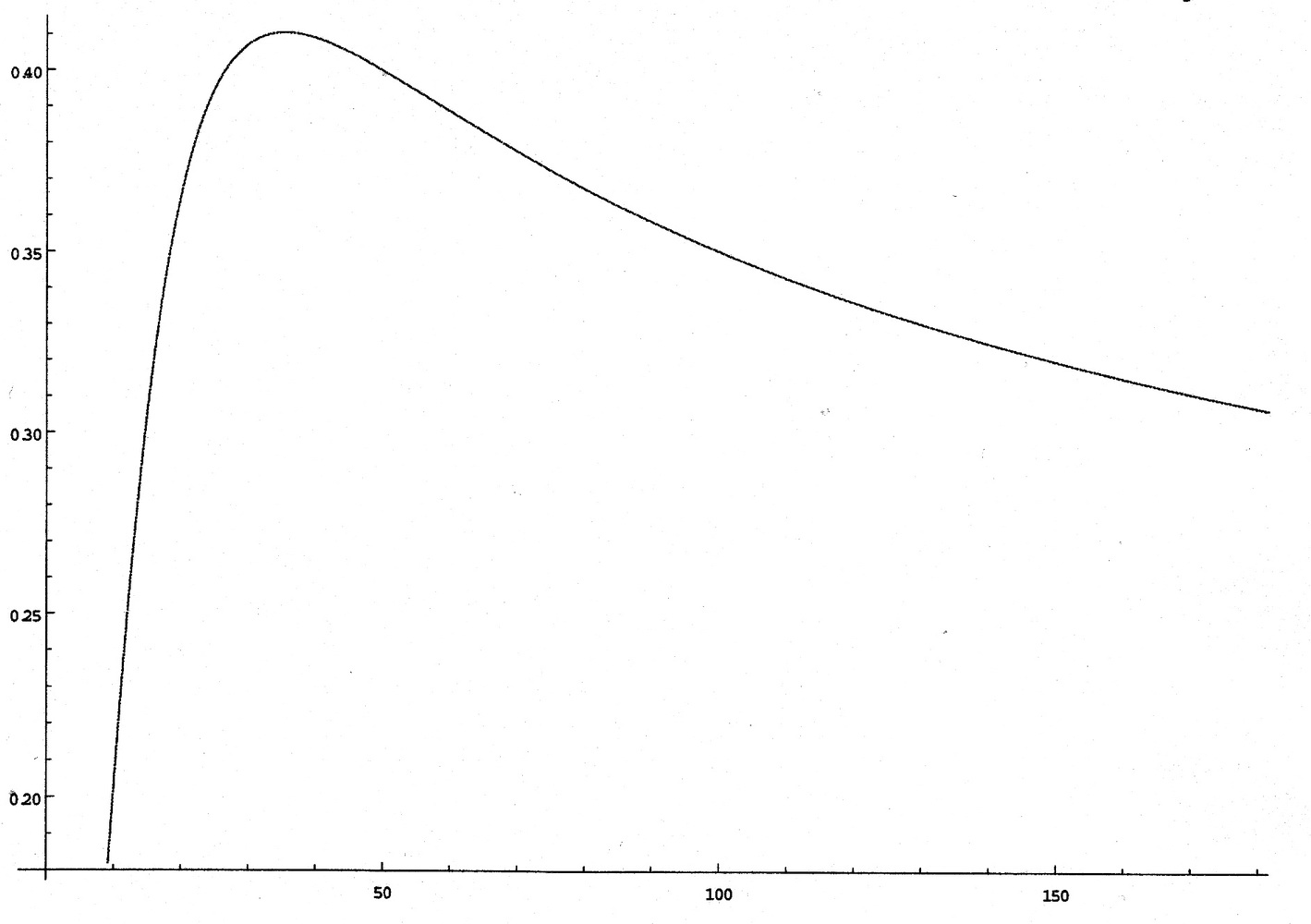}
\\
\centerline{Figure 2}
\end{minipage}
 \hfill
\begin{minipage}[u]{7cm}
\includegraphics[height=5cm]{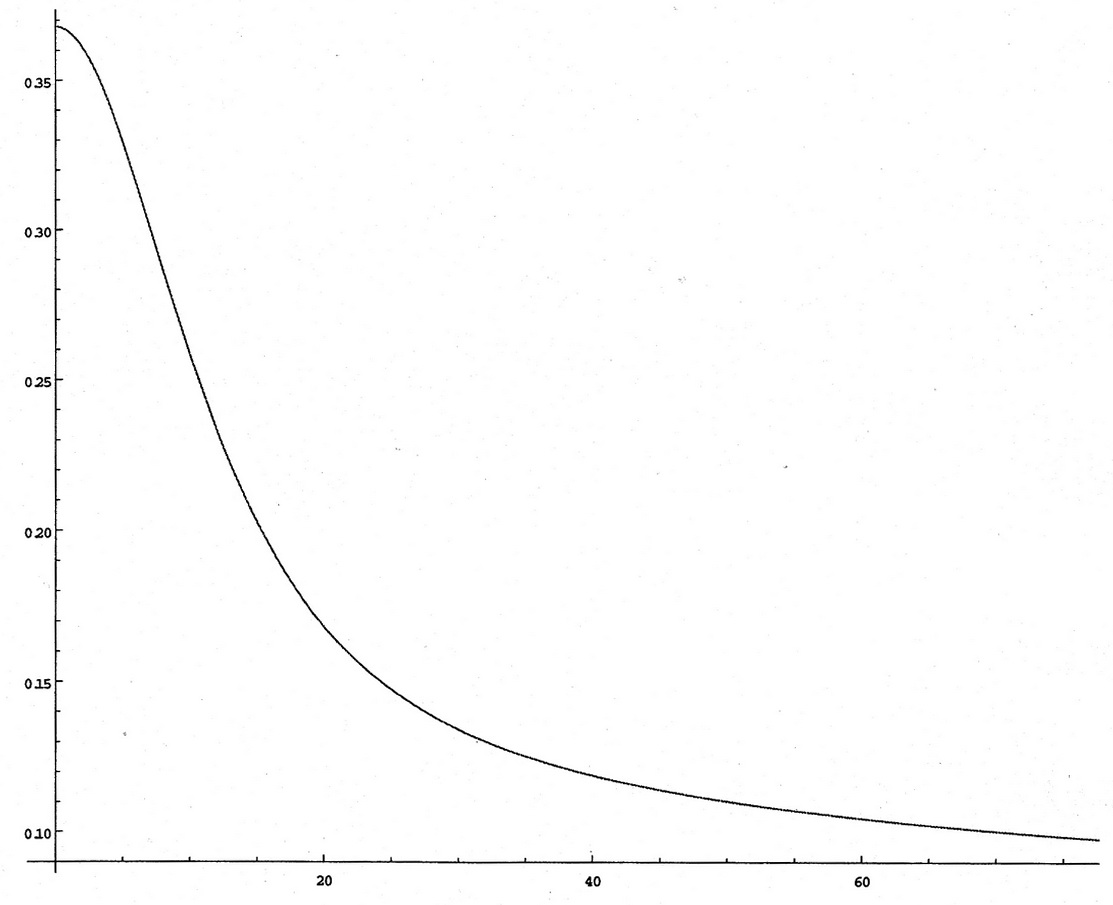}
\\
\centerline{Figure 3}
\end{minipage}
\\
{~}\\
{~}\\
\noindent
Considering  the derivative $q'$, see Figure 3,  it becomes clear that $q$ is strictly increasing for all $t\geq 0$. 
\\	
{\em Step 2.} We shall prove \eqref{ws-20} in case  $y \ge x$. From $w_*$ increasing  and $\min (w_* (y), \\ w_* (x-y)) = w_* (x-y)$
 we derive the validity of \eqref{ws-20} for all $s$,  $0 < s\le 1$. 
\\
{\em Step 3.} Now we turn to the case $x>y$. We shall split our investigations into the three 
cases $x\geq 2 t_0$, $t_0 < x <2 t_0$ and $0\leq x \leq t_0$, where $t_0$ is the root of $w_*''$. \\
 
\begin{minipage}[u]{10cm}
\includegraphics[height=8cm]{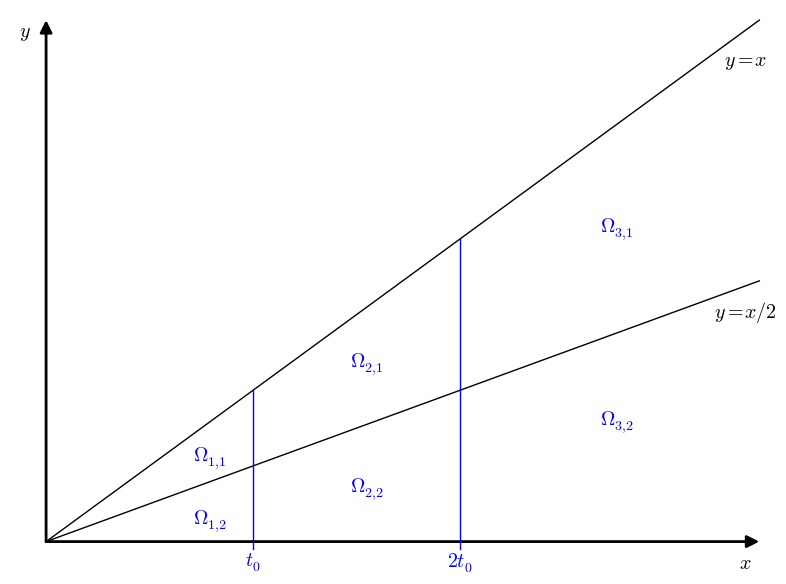}
\\
\centerline{Figure 4}
\end{minipage}
\\
{~}\\
{~}\\
\noindent
Figure 4 shows the division of the $(x,y)$-plane into six parts. 
In the subsequent computations we will refer to the zones $\Omega_{i,j}$, $i,j=1,2,3$.

\noindent
{\em Step 3.1.} Let $x \geq 2 t_0$. Now we have to consider two cases, i.e.,  $0 \le  y \le  x \le 2y$ ($(x,y)\in \Omega_{3,1}$) and 
$0\le 2y <x$ ($(x,y)\in \Omega_{3,2}$). \\
{\em Substep 3.1.1.} Suppose $(x,y)\in \Omega_{3,1}$, i.e., $\min (w_* (y), w_* (x-y)) = w_* (x-y)$. We consider the function
\[ 
h(x,y) := w_* (x) - w_*(y) - (1-s) w_*(x-y) \, , \qquad 0< y<x\, .
\]
Looking for extreme values of $h$ we have to determine the solutions of $\grad h =0$.
We obtain
 \begin{eqnarray*}
 	w'_*(x) - (1-s) w'_* (x-y) & = & 0, \\
	-w'_*(y) + (1-s) w'_*(x-y) & = & 0,
 \end{eqnarray*}
 which immediately  yields $w'_*(x) = w'_*(y)$.  Since  $w'(t)$ is strictly monotonically decreasing for all $t\geq t_0$ we conclude $x=y$. 
However,  this results in  $w'_*(y)=w'_*(x)=0$ since $w'_*(0)=0$. Because of $w'_*(t)>0$ for all $t>0$
we get a contradiction to $x \ge 2 t_0$. Thus, the maximum of $h=h(x,y)$ with respect to $\Omega_{3,1}$ lies on the boundary of the 
domain, that is, $x=2 t_0$, $y=x$ and $y=\frac{x}{2}$. Hence we need to check whether the following functions are nonpositive:
 \begin{eqnarray}
  h(2 t_0, y)  & = &  w_*(2 t_0) - w_*(y) - (1-s) w_*(2 t_0 -y) \quad \mbox{if}\quad  t_0 \leq y \leq 2 t_0, \qquad\label{c1-1-1} \\
	h(x,x)  & = & - (1-s) w_*(0) \quad \mbox{if}\quad 2 t_0 \leq x,  \label{c1-1-2} \\
	h(x,\frac{x}{2}) & = & w_*(x) - (2-s) w_*(\frac{x}{2}) \quad \mbox{if} \quad 2t_0 \leq x. \label{c1-1-3}
\end{eqnarray}
The function \eqref{c1-1-2} is trivially nonpositive for all $s\in (0,1]$. Considering the function \eqref{c1-1-1} 
and taking account of the mean value theorem we obtain
\[
 h(2 t_0, y) \le 0 \qquad \Longleftrightarrow \qquad 
w'_*(\xi) \leq (1-s) \frac{w_*(2t_0-y)}{2t_0 -y}, 
\]
where $\xi\in (y,2t_0)$. A substitution yields
\begin{equation} \label{boundEq}
	w'_*(\xi) \leq (1-s) \frac{w_*(z)}{z}
\end{equation}
with $0 \leq z \leq t_0$ and $\xi\in (2t_0-z, 2t_0)$. Using the monotonicity of $q$ and of $w'_*$, see Step 1, we find
\begin{equation} \label{w-01}
q(z)\, w'_*(\xi) = \frac{z}{w_*(z)} w'_*(\xi) \leq \frac{t_0}{w_*(t_0)} w'_*(\xi) \leq \frac{t_0}{w_*(t_0)} w'_*(t_0) \leq p_0 <1 \, .
\end{equation}
Thus, \eqref{boundEq} is true with  $0 < s \le  1-p_0$ and therefore $h(2 t_0, y)\le 0$ on  $[t_0,2t_0]$. 
\\
It is left to show that the function \eqref{c1-1-3} is nonpositive. First of all observe that
\[ 
  w'_*(\frac{x}{2}) \leq (1-s) \frac{w_*(\frac{x}{2})}{\frac{x}{2}}, \qquad x \ge 2t_0\, , 
\]
would imply $h(x, \frac x2) \le 0$ (we used the mean value theorem and the fact that $w'_*(t)$ is monotonically decreasing for all $t\geq t_0$). 
Equivalently we can write
\[ p(\frac t2)= \frac{\frac{x}{2} w'_*(\frac{x}{2})}{w_*(\frac{x}{2})} \leq 1-s. \]
As above this is true for all $0 < s \le  1-p_0$. \\
 {\em Substep 3.1.2.} Suppose $(x,y)\in \Omega_{3,2}$, i.e., $\min (w_* (y), w_* (x-y)) = w_* (y)$. Therefore we consider the function
  \[ h(x,y) := w_* (x) - w_*(x-y) - (1-s) w_*(y)\, , \qquad x \ge 2t_0\, , \: 0 < y \le \frac x2\, .  \]
 Considering the equation $\grad h =0 $ we obtain
 \begin{eqnarray*}
 	w'_*(x) - w'_*(x-y) & = & 0, \\
	w'_*(x-y) - (1-s) w'_*(y) & = & 0.
 \end{eqnarray*}
Taking care of the first equation only we get $w'_*(x) = w'_*(x-y)$. 
 Due to the assumptions we also know that $x \ge x-y \geq t_0$. From the monotonicity of $w'$ on $[t_0, \infty)$  we conclude 
that $y=0$ and then, by means of the second equation, $w'_*(x) = 0$. 
Again it follows $x=0$ which is in contradiction with the assumption $x\geq 2 t_0$. 
Thus, $h=h(x,y)$ attains it's maximum on the boundary of $\Omega_{3,2}$. We have to consider
 \begin{eqnarray}
 	h(2 t_0, y) & = & w_*(2 t_0) - w_*(2 t_0 - y) - (1-s) w_*(y)\quad  \mbox{if}\quad  0 \leq y < t_0, \qquad\label{c1-2-1} \\
	h(x,0) & = & -(1-s) w_*(0) \quad \mbox{if} \quad 2t_0 \leq x, \label{c1-2-2} \\
	h(x,\frac{x}{2}) & = & w_*(x) - (2-s) w_*(\frac{x}{2}) \quad \mbox{if} \quad 2t_0 \leq x. \label{c1-2-3}
\end{eqnarray}
The functions in \eqref{c1-2-2} and \eqref{c1-2-3} were already considered in Substep 3.1.1. 
It remains to show that the function \eqref{c1-2-1} is nonpositive. Applying the same arguments as in Substep 3.1.1 we get 
\[ h(2t_0,y) = y w'_*(\xi) - (1-s) w_*(y) \leq 0 \, , \qquad 2t_0 - y < \xi < 2t_0 \, .\]
This would follow from  
\[ \frac{y}{w_*(y)} w'_*(\xi) \leq 1-s, \]
if the latter inequality would be true for all  $y\in [0, t_0)$ and all $\xi\in (2t_0-y,2t_0)$. Arguing as in \eqref{w-01}
we find 
\[ \frac{y}{w_*(y)} w'_*(\xi) \leq \frac{t_0}{w_*(t_0)} w'_*(\xi) \leq \frac{t_0}{w_*(t_0)} w'_*(t_0) \leq p_0\, , \]
which shows that $h(2 t_0, y)$ is nonpositive if  $0 < s \le  1-p_0$. 
\\
{\em Step 3.2.} Let $t_0 < x < 2 t_0$. We cannot apply the strategy of Step 1 anymore 
since $w_*''=w_*''(y)$ changes sign in the considered interval for $y$.  
\\
{\em Substep 3.2.1.} Suppose $(x,y) \in \Omega_{2,1}$, i.e., $\min (w_* (y), w_* (x-y)) = w_* (x-y)$. Therefore we consider the function
 \[ h(x,y) := w_* (x) - w_*(y) - (1-s) w_*(x-y), \qquad t_0 < x < 2 t_0\, , \: \frac x2 < y < x \, ,\]
which is nonpositive if and only if
\begin{equation} \label{auxEq1}
	 \frac{w_*(x) - w_*(y)}{w_*(x-y)} \stackrel{[z=x-y]}{=} \frac{w_*(z+y)-w_*(y)}{w_*(z)} \leq 1-s. 
\end{equation}
By the mean value theorem we obtain
\[ \frac{w_*(z+y)-w_*(y)}{w_*(z)}  = \frac{z}{w_* (z)} w'_*(\xi_{y,z}), \]
where $z\in [0,t_0 )$ and $\xi_{y,z} \in (y, z+y)$. We know that $w_*'' = w_*''(t)$ vanishes if and only if $t = t_0$. 
Thus, $w'_* = w'_*(t)$ attains it's global maximum at $t=t_0$. The monotonicity of the function $q$ leads to 
\[ \frac{z}{w_* (z)} w'_*(\xi_{y,z}) \leq \frac{z}{w_* (z)} w'_*(t_0) \leq \frac{t_0}{w_* (t_0)} w'_*(t_0) \leq p_0\, . \]
Hence $h(x,y)\le 0$ if  $0 < s \le 1-p_0$. \\
{\em Substep 3.2.2.} Suppose $(x,y) \in \Omega_{2,2}$, i.e., $\min (w_* (y), w_* (x-y)) = w_* (y)$. Therefore we consider the function
 \[ h(x,y) := w_* (x) - w_*(x-y) - (1-s) w_*(y),\qquad t_0 < x < 2t_0\, , \: 0 < y < \frac x2\, ,  \]
which is nonpositive if and only if
\begin{equation} \label{auxEq2}
	 \frac{w_*(x) - w_*(x-y)}{w_*(y)} = \frac{y}{w_*(y)} w'_*(\xi_{x,y}) \leq 1-s
\end{equation}
with $\xi_{x,y} \in (x-y,x)$. By the same arguments as in Substep 3.2.1 we get $h(x,y)\le 0$ if  $0 < s \le 1-p_0$. 
\\
 {\em Step 3.3.} Let $0 \leq x \leq t_0$. Remark that all the substeps are treated as in Step 1. with the difference in the interval with respect to the $x$-variable. \\
 {\em Substep 3.3.1.} Suppose $(x,y) \in \Omega_{1,1}$, i.e., $\min (w_* (y), w_* (x-y)) = w_* (x-y)$. Therefore we consider the function
 \[ 
h(x,y) := w_* (x) - w_*(y) - (1-s) w_*(x-y)\, , \qquad 0 < x < t_0, \: \frac x2 < y < x. 
\]
Analogously to Step 3.1.1, equation $\grad h =0$ yields $x=y=0$. Consequently the maximal value of $h$ has to be located at the boundary.
It remains to consider the functions $h(x,x)$ and $h(x,\frac x2)$. The first one is trivial.
Concerning the second one we observe
\[ h(x,y) \leq  0 \qquad \Longleftrightarrow \qquad \frac x2 \, \frac{w'_* (\xi)}{w_* (\frac x2)} \le 1-s \, .\]
Employing the monotonicity properties of $p$ and $q$ we get
\[
\frac x2 \, \frac{w'_* (\xi)}{w_* (\frac x2)} \le t_0 \, \frac{w'_* (\xi)}{w_* (t_0)} \le p(t_0) \le p_0\, .
\]
Hence,  $h(x,y)\le 0$ if  $0 < s \le 1-p_0$. 
\\
{\em Step 3.3.2.} Suppose $(x,y) \in \Omega_{1,2}$, i.e., $\min (w_* (y), w_* (x-y)) = w_* (y)$. Therefore we consider the function
 \[ h(x,y) := w_* (x) - w_*(x-y) - (1-s) w_*(y)\, , \qquad 0< x < t_0, \: 0 < y < \frac{t_0}{2} . \]
Again  the equation $\grad h =0$ yields $x=y=0$. 
This justifies to study $h$ on $\partial \Omega_{1,2}$ only. 
The behavior of $h(x,\frac x2)$ we have already investigated in Substep 3.3.1. 
Since the function $h(x,0) \leq 0$ for all $s\in (0,1]$ we are done. 
\end{proof}

 \begin{rem}
Our proof yields that any $s$, $0< s \le 1- p_0$ will do the job. Here
$p_0 = \sup_{t>0} p(t) \in (0,410247, 0,410248)$.
 \end{rem}

Based on Lemma \ref{numer} we may follow the arguments in the proof of 
Theorem \ref{algebra} and Corollary \ref{algebra2}.

\begin{thm} \label{algebraUltra}
Let $1\leq p_1,p_2,q \leq \infty$. Define $p$ by $\frac 1p := \frac{1}{p_1}+\frac{1}{p_2}$.
Assume that $f\in\mathcal{U M}^{p_1,q}(\R^n)$ and $g\in\mathcal{U M}^{p_2,q}(\R^n)$. Then $f\cdot g\in\mathcal{U M}^{p,q}(\R^n)$ 
and it holds
\[ \|f \cdot g\|_{\mathcal{U M}^{p,q}} \leq C \|f\|_{\mathcal{U M}^{p_1,q}} \|g\|_{\mathcal{U M}^{p_2,q}} \]
with a positive constant $C$ which only depends on the choice of the frequency-uniform decomposition, 
the dimension $n$ and the parameter $q$. 
\\
In particular, the modulation space $\mathcal{U M}^{p,q}(\R^n)$ is an algebra under pointwise multiplication.
\end{thm}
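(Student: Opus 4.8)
The plan is to repeat the scheme used for Theorem \ref{algebra} and Corollary \ref{algebra2} almost verbatim, with Lemma \ref{numer} taking over the role played there by Lemma \ref{weightEstimate}. The first task is to convert the scalar inequality of Lemma \ref{numer} into the vector-valued form needed in the decomposition argument. Since $w_*$ is increasing, the triangle inequality $|k|\le |l|+|l-k|$ gives $w_*(|k|)\le w_*(|l|+|l-k|)$, and applying Lemma \ref{numer} with $x=|l|+|l-k|$ and $y=|l|$ (so that $x-y=|l-k|$) yields
\[
w_*(|k|)\le w_*(|l|)+w_*(|l-k|)-s\,\min\bigl(w_*(|l|),\,w_*(|l-k|)\bigr)
\]
for all $k,l\in\Z^n$, where $s\in(0,1)$ is the number furnished by Lemma \ref{numer}. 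Exponentiating and multiplying the exponent by $q$, this is precisely the analogue of Lemma \ref{weightEstimate}, with the Gevrey weight $e^{|k|^{1/s}}$ replaced by $e^{w_*(|k|)}$ and with the decay factor $e^{-s\min(\cdots)}$ in place of $e^{-\delta\min\{|l-k|,|l|\}^{1/s}}$.

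With this inequality in hand I would follow the proof of Theorem \ref{algebra} line by line. Writing $f_j=\F^{-1}(\sigma_j\F f)$, $g_l=\F^{-1}(\sigma_l\F g)$ and $f\cdot g=\sum_{j,l\in\Z^n}f_j g_l$, convergence in $C_{ub}(\R^n)$ (hence in $S'(\R^n)$) follows from Hölder's inequality exactly as before, using the summability $\sum_{j\in\Z^n}e^{-2w_*(|j|)}<\infty$; by the Fatou property (Lemma \ref{basic2}(iii)) it then suffices to bound the partial sums uniformly in $\mathcal{U M}^{p,q}$. The Fourier-support bookkeeping is unchanged: $\sigma_k\F(f_j g_l)$ vanishes unless $\|k-(j+l)\|_\infty<3$, so after the substitutions $r=j+l$, $t=r-k$ and applications of Young's and Hölder's inequalities one arrives at the analogue of \eqref{ws-03},
\[
\Bigl(\sum_{k\in\Z^n}e^{q\,w_*(|k|)}\,\|\F^{-1}(\sigma_k\F(f\cdot g))\|_{L^p}^q\Bigr)^{1/q}\le c\,\max_{\|t\|_\infty<3}\Bigl(\sum_{k\in\Z^n}e^{q\,w_*(|k|)}\bigl[\textstyle\sum_{l}\|f_{t-(l-k)}\|_{L^{p_1}}\|g_l\|_{L^{p_2}}\bigr]^q\Bigr)^{1/q}.
\]

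Now I insert the weight inequality above, splitting the inner sum into the two pieces $S_{1,t,k}$ and $S_{2,t,k}$ according to whether $w_*(|l|)$ or $w_*(|l-k|)$ realises the minimum, and apply Hölder in the $l$-variable with exponents $q,q'$ exactly as in the derivation of \eqref{ws-25}. The decisive point, and the one place where the new weight genuinely must be checked, is that the decay factor remains summable, i.e.
\[
\sum_{m\in\Z^n}e^{-s q'\,w_*(|m|)}<\infty.
\]
This holds because $w_*(t)\asymp\log t\,\log\log t$ for large $t$, so $e^{-sq'w_*(|m|)}$ decays faster than any negative power of $|m|$; this superpolynomial decay is exactly what the construction of $w_*$ was designed to guarantee. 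The only other quantity to control is the shift constant $\sup_{\|t\|_\infty<3}\sup_{j\in\Z^n}e^{w_*(|j|)-w_*(|j-t|)}$ replacing $c_4$ in \eqref{ws-27}; since the proof of Lemma \ref{numer} shows that $w_*'$ attains a finite global maximum at $t_0$, the function $w_*$ is globally Lipschitz, whence $w_*(|j|)-w_*(|j-t|)\le w_*'(t_0)\,|t|\le 3\sqrt{n}\,w_*'(t_0)$ and the constant is finite. Feeding the resulting bounds on $S_{1,t,k}$ and $S_{2,t,k}$ back into the displayed estimate gives the multiplicative inequality with a constant depending only on the decomposition, $n$ and $q$; the cases $\max(p,q)=\infty$ are handled by the obvious passage to $\ell^\infty$- and $L^\infty$-norms, as in Theorem \ref{algebra}. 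Finally, the algebra statement follows as in Corollary \ref{algebra2}: Nikol'skij's inequality (Lemma \ref{nikolskij}) gives, uniformly in $k$, $\|\Box_k f\|_{L^{2p}}\le c\,\|\Box_k f\|_{L^p}$ and hence the embedding $\mathcal{U M}^{p,q}(\R^n)\hookrightarrow\mathcal{U M}^{2p,q}(\R^n)$, so applying the product estimate with $p_1=p_2=2p$ yields $\|f\cdot g\|_{\mathcal{U M}^{p,q}}\le C\,\|f\|_{\mathcal{U M}^{2p,q}}\|g\|_{\mathcal{U M}^{2p,q}}\le C_1\,\|f\|_{\mathcal{U M}^{p,q}}\,\|g\|_{\mathcal{U M}^{p,q}}$. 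I expect the main obstacle to be essentially bookkeeping: confirming that every finite constant of the Gevrey proof has a finite analogue under the slower-growing weight $w_*$, with the two displayed facts above being the only substantive verifications.
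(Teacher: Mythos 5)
Your proposal is correct and follows essentially the same route as the paper: the authors likewise rerun the proof of Theorem \ref{algebra} with Lemma \ref{numer} in place of Lemma \ref{weightEstimate}, redefining exactly the two constants you single out (the summable tail $\sum_{m}e^{-sq'w_*(|m|)}$ and the shift constant, which they also bound via the mean value theorem and the boundedness of $w_*'$), and then deduce the algebra property from the Nikol'skij embedding with $p_1=p_2=2p$. Your explicit derivation of the discrete weight inequality from the scalar Lemma \ref{numer} (via monotonicity of $w_*$ and the choice $x=|l|+|l-k|$, $y=|l|$) is a detail the paper leaves implicit, but it is the intended argument.
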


\begin{proof}
{\em Step 1.}
We can basically follow the proof of Theorem \ref{algebra}. Therefore we will restrict ourselves to very few comments.
The first comment concerns the constant $c_3$.
Let $s$ denote the positive constant in Lemma \ref{numer}.
This time we define
\begin{equation}\label{ws-26}
c_3 := \Big( \sum_{m \in\Z^n}  e^{-s w_* (|m|) q'} \Big)^{\frac{1}{q'}} <\infty,
\end{equation}
see \eqref{ws-25}. Also the constant $c_4$, see \eqref{ws-27}, requires some changes.
The uniform boundedness and the positivity of $w_*'$ and the mean-value theorem imply
the existence of some $\xi$ such that
\[
e^{w_* (|j|) - w_* (|t-j|)} = e^{w_*' (\xi)(|j|-|j-t|)}\le e^{\|\, w_*' \|_{L^\infty}\, |t|} \, .
\]
Hence,
\begin{equation}\label{ws-28} 
\max_{\substack{t \in \Z^n, \\ -3<t_i < 3, \\ i=1,\ldots, n}} \sup_{j\in\Z^n} 
e^{w_* (|j|) - w_* (|t-j|)} \leq e^{\|w'_*\|_{L^\infty} \, 2\sqrt{n}} =: c_4 < \infty \, . 
\end{equation}
All the remaining arguments do not change.
\\
{\em Step 2.} We prove the algebra property of $\mathcal{U M}^{p,q}(\R^n)$.
An application of Lemma \ref{nikolskij} yields the continuous embedding
\[
\mathcal{U M}^{p_1,q}(\R^n)  \hookrightarrow \mathcal{U M}^{p_2,q}(\R^n)\, , \qquad 1\le p_1 \le p_2\le \infty\, .
\]
Now we apply Step 1 with $p_1=p_2 =2p$ and use the embedding $\mathcal{U M}^{p,q}(\R^n)  \hookrightarrow \mathcal{U M}^{p_j,q}(\R^n)$, $j=1,2$.
The proof is complete.
\end{proof}

Now the same steps as in Section \ref{Gevreysuperposition} yield a non-analytic superposition result on the modulation 
spaces $\mathcal{U M}^{p,q}(\R^n)$. First of all we need to show the subalgebra property. 
Therefore recall the decomposition of the phase space in $(2^n+1)$ parts as in Section \ref{Gevreysuperposition}.

\begin{prop} \label{subalgebraUltra}
Let $1\leq p, q \leq \infty$ and $N \in \N$.  Suppose that $\epsilon=(\epsilon_1,\ldots, \epsilon_n)$ is fixed with 
$\epsilon_j \in \{0,1\}$, $j=1,\ldots,n$. Let $s$ be as in Lemma \ref{numer} and $R\geq 2$.
The spaces
\[ \mathcal{U M}^{p,q}(\epsilon, R) := \{ f\in \mathcal{U M}^{p,q}(\R^n): \supp \F(f) \subset P_R(\epsilon) \} \]
are subalgebras of $\mathcal{U M}^{p,q}$. Furthermore, it holds
\begin{equation}\label{ws-29}
\|f g\|_{\mathcal{U M}^{p,q}} \leq G_{R,N} \, \|f\|_{\mathcal{U M}^{p,q}} \|g\|_{\mathcal{U M}^{p,q}} 
\end{equation}
	for all $f,g \in\mathcal{U M}^{p,q}(\epsilon, R)$. The constant $G_{R,N}$ can be specified by
	\[ G_{R,N} = C_N \, R^{-N}, \]
	where the constant $C_N > 0 $ depends  on $n,p,q$ and  $N$, but not on $f,g$ and $R$.
\end{prop}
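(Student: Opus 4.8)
The plan is to run the argument of Theorem~\ref{algebraUltra} once more, but now incorporating the Fourier-support restriction exactly as Proposition~\ref{subalgebra1} did in the Gevrey setting. Writing $f=\sum_j f_j$ and $g=\sum_l g_l$ with $f_j=\F^{-1}(\sigma_j\F f)$, the computation leading to \eqref{ws-03} carries over verbatim once $e^{|k|^{1/s}}$ is replaced by $e^{w_*(|k|)}$, the splitting of $e^{w_*(|k|)}$ across the two factors now being supplied by Lemma~\ref{numer} in place of Lemma~\ref{weightEstimate}. As in Theorem~\ref{algebraUltra} one is left with two sums $S_{1,t,k}$ and $S_{2,t,k}$, and an application of H\"older's inequality in the index $l$ produces a ``good'' factor together with a small-weight factor $e^{-s\,w_*(|l|)}$ (respectively $e^{-s\,w_*(|l-k|)}$), where $s$ is the constant from Lemma~\ref{numer}.

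The decisive difference from Theorem~\ref{algebraUltra} is that $f,g\in\mathcal{U M}^{p,q}(\epsilon,R)$ forces every nonvanishing index to lie in $P_R^*(\epsilon)$, so in the H\"older step the summation that produced the constant $c_3$ of \eqref{ws-26} is no longer extended over all of $\Z^n$ but only over $\|l\|_\infty>R-1$ (for $S_1$, with the symmetric condition handling $S_2$). Hence $c_3$ is replaced by the tail factor
\[
\Big(\sum_{\|l\|_\infty>R-1} e^{-s\,q'\,w_*(|l|)}\Big)^{1/q'},
\]
whereas the constant $c_4$ of \eqref{ws-28} and all the remaining manipulations (Minkowski, Young's inequality, the reindexing $j=l-k$) remain exactly as in Theorem~\ref{algebraUltra}. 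It therefore suffices to show that this factor is bounded by $C_N\,R^{-N}$ for the given $N$.

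For the inner sum $\sum_{\|l\|_\infty>R-1} e^{-s q' w_*(|l|)}$ I would pass to an integral, as in \eqref{ws-06}, obtaining a bound by $\int_{|x|>R-2} e^{-s q' w_*(|x|)}\,dx$. Since $w_*$ is increasing, for $r\ge R-2$ I split the exponent as $s q' w_*(r)\ge \tfrac{s q'}{2}w_*(r)+\tfrac{s q'}{2}w_*(R-2)$, which pulls out a prefactor $e^{-\frac{s q'}{2}w_*(R-2)}$ and leaves the convergent integral $\int_{0}^{\infty} r^{\,n-1} e^{-\frac{s q'}{2}w_*(r)}\,dr$, finite and independent of $R$. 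Because $w_*(R-2)=\log\langle R-2\rangle_*\,\log\log\langle R-2\rangle_*$ with $\log\log\langle R-2\rangle_*\to\infty$, the prefactor equals $\langle R-2\rangle_*^{-\frac{s q'}{2}\log\log\langle R-2\rangle_*}$ and so decays faster than any power of $R$; being finite on every bounded $R$-interval, the whole factor (including the outer power $1/q'$) is $\le C_N\,R^{-N}$ for all $R\ge 2$, with $C_N$ depending only on $n,p,q,N$ and $s$.

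Inserting this bound in place of $c_3$ yields \eqref{ws-29} with $G_{R,N}=C_N\,R^{-N}$ in the case $p_1=p_2$; the general exponents and the closedness of $\mathcal{U M}^{p,q}(\epsilon,R)$ under multiplication then follow precisely as in Step~2 of Theorem~\ref{algebraUltra} and in Corollary~\ref{subalgebra}, using the embedding $\mathcal{U M}^{p,q}\hookrightarrow\mathcal{U M}^{p_j,q}$. The only genuinely new point, and the sole real obstacle, is the super-polynomial tail estimate: one must verify that the slowly growing weight $w_*$ still forces faster-than-any-polynomial smallness of the cut-off sum. Everything else is a direct transcription of the proofs of Theorem~\ref{algebraUltra} and Proposition~\ref{subalgebra1}.
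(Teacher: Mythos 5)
Your proposal is correct and follows essentially the same route as the paper: rerun the proof of Proposition~\ref{subalgebra1} with $|k|^{1/s}$ replaced by $w_*(|k|)$ and Lemma~\ref{weightEstimate} by Lemma~\ref{numer}, the only genuinely new point being the super-polynomial smallness in $R$ of the restricted tail sum $\sum_{\|l\|_\infty>R-1}e^{-s q' w_*(|l|)}$. The paper extracts the $R^{-N}$ decay by first dominating $e^{-s w_*(|l|)}$ by $c_M(1+|l|)^{-M}$ with $M-\frac{n}{q'}\ge N$ and integrating the resulting power tail, whereas you split off the boundary factor $e^{-\frac{s q'}{2}w_*(R-2)}$ directly; the two computations are equivalent and both yield $G_{R,N}=C_N R^{-N}$.
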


\begin{proof}
We proceed as in proof of Proposition  \ref{subalgebra1} (using also the notation introduced there). 
Hence, it will be enough to discuss the changes caused by the replacement of 
$|k|^{1/s}$ by $w_*(|k|)$.
To estimate the counterparts of $S_{1,t,k}$ and $S_{2,t,k}$ we need to estimate 
the quantity
\[
\widetilde{G}_R:= \Big(\sum_{\substack{l\in\Z^n: ~ l, t-(l-k)\in P_R^* (0,\,  \ldots \, ,0), \\
 |l|\leq |l-k|}} e^{-s q' w_* (|l|)} \Big)^{1/q'} \, . 
\]
Here we will use that for any natural number $M$ there exists a constant $c_M$
such that 
\[
\sup_{l\in\Z^n} e^{-s\,  w_* (|l|)} \, (1+|l|)^M =:c_M < \infty\, .
\]
By definition of  $P_R^* (0,\,  \ldots \, ,0)$ it follows
\begin{eqnarray*}
\widetilde{G}_R & \le & c_M \, \Big(\sum_{l\in\Z^n: ~ \|l\|_\infty>R-1}  (1+|l|)^{-Mq'}  \Big)^{1/q'}
\\
&\le &
c_M \, \Big(\int_{\|x\|_\infty >R-2}   (1+|x|)^{-Mq'}dx  \Big)^{1/q'}
\\
&\le &
c_M \, \Big(\int_{|x| >R-2}   (1+|x|)^{-Mq'} dx \Big)^{1/q'}\, .
\end{eqnarray*}
Clearly, if $Mq'>n$ we obtain 
\begin{eqnarray*}
\int_{|x| >R-2}   (1+|x|)^{-Mq'}\, dx & = &  
2\, \frac{\pi^{n/2}}{\Gamma (n/2)}\,  \int_{R-2}^\infty r^{n-1} \, (1+r)^{-Mq'} \, dr
\\
 & \le  &  
2\, \frac{\pi^{n/2}}{\Gamma (n/2)}\,  \int_{R-1}^\infty  \, r^{n-1-Mq'} \, dr
\\
& = & 2\, \frac{\pi^{n/2}}{\Gamma (n/2)}\, \frac{(R-1)^{n-Mq'}}{Mq'-n}\, .
\end{eqnarray*}
Altogether this results in the estimate
\[
\widetilde{G}_R \le C \, (R-1)^{\frac{n}{q'}-M}\, ,
\]
where $C= C(n,q,M)$ depends on $n,q,M$ but not on $f,g$ and $R$.
For given $N \in \N$, by choosing $M -\frac{n}{q'}\ge N$, we can always guarantee an estimate of 
$\widetilde{G}_{R}$ by a constants times $R^{-N}$ for all $R\geq 2$. 
Taking into account the other constants showing up during the proof of 
 Proposition  \ref{subalgebra1} (but see also the proofs of Theorem \ref{algebra} and Corollary \ref{algebra2})
we finally get \eqref{ws-29}.
\end{proof}

\begin{lem} \label{estSuperUltra}
Let $1<p <\infty$ and  $1\le q \le \infty$.  Then, for any $\vartheta >1$ and for any $N \in \N$
there exist  positive constants $b,c$ such that
\[ \| e^{iu}-1\|_{\mathcal{U M}^{p,q}} \leq c \, \|u\|_{\mathcal{U M}^{p,q}}\, 
\left\{
\begin{array}{lll}
 e^{\vartheta \, w_* (b\, \|u\|^{1+\frac 1N}_{\mathcal{U M}^{p,q}})} &\qquad & \mbox{ if } \|u\|_{\mathcal{U M}^{p,q}} > 1, \\
1 &&  \mbox{ if } \|u\|_{\mathcal{U M}^{p,q}} \leq 1
\end{array}\right. 
\]
holds for all $u \in \mathcal{U M}^{p,q}(\R^n)$.
In addition, the constant $b$ can be chosen independent of $\vartheta$ and $N$.
\end{lem}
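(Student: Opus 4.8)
The plan is to mirror the two-step proof of Lemma~\ref{estSuper}, replacing the Gevrey exponent $|k|^{1/s}$ by $w_*(|k|)$ throughout and substituting Theorem~\ref{algebraUltra} for the algebra estimate \eqref{ws-07} and Proposition~\ref{subalgebraUltra} for the subalgebra estimate of Corollary~\ref{subalgebra}. First I would establish the localized analogue of \eqref{expest}: for every $u\in\mathcal{U M}^{p,q}(\R^n)$ with $\supp\F(u)\subset P_R$, $R\ge 2$, one has
\[
\|e^{\i u}-1\|_{\mathcal{U M}^{p,q}} \le c_0\,\|u\|_{\mathcal{U M}^{p,q}}\Big(1+e^{w_*(c_1 R\|u\|_{\mathcal{U M}^{p,q}})}\Big),
\]
with $c_0,c_1$ depending only on $n,p,q$. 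The derivation copies Step~1 of Lemma~\ref{estSuper}: the Taylor tail $S_2$ is controlled by Theorem~\ref{algebraUltra} after choosing $r\asymp\|u\|_{\mathcal{U M}^{p,q}}$, while the polynomial part $S_1$ is handled by Lemma~\ref{bernstein} together with the Lipschitz bound $\|e^{\i u}-1\|_{L^p}\le 2\|u\|_{L^p}$. The only genuinely new computation is the weight sum over the enlarged Fourier support $\{\|k\|_\infty<Rr+1\}$, namely $\sum_{\|k\|_\infty<Rr+1}e^{qw_*(|k|)}$, which I would bound by the corresponding integral exactly as in \eqref{ws-10}; since $w_*$ is increasing, its dominant contribution is $e^{qw_*(\sqrt n(Rr+1))}$ up to a polynomial factor in $Rr+1$, and the choice $r\asymp\|u\|$ turns this into the stated $e^{w_*(c_1 R\|u\|)}$.

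Second, for general $u$ I would decompose $u=u_0+\sum_{\epsilon}u_\epsilon$ precisely as in Step~2 of Lemma~\ref{estSuper}; this is the point at which the restriction $1<p<\infty$ enters, through the Riesz/Lizorkin multiplier theorem and the resulting multiplier constant $C_6$. Applying Lemma~\ref{lma46brs} expands $e^{\i u}-1$ into a sum of products of at most $2^n+1$ factors drawn from $e^{\i u_0}-1$ and the $e^{\i u_\epsilon}-1$, and the algebra estimate \eqref{ws-12} (now with the constant from Theorem~\ref{algebraUltra}) reduces everything to the individual factors. Each $u_\epsilon$-factor (Fourier support in $P_R(\epsilon)$) is controlled as in \eqref{est1} by Proposition~\ref{subalgebraUltra}, giving $\|e^{\i u_\epsilon}-1\|_{\mathcal{U M}^{p,q}}\le G_{R,N}^{-1}\big(e^{G_{R,N}C_6\|u\|}-1\big)$ with $G_{R,N}=C_N R^{-N}$, while the $u_0$-factor is controlled by the localized estimate above. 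The decisive choice is $R\asymp\|u\|^{1/N}_{\mathcal{U M}^{p,q}}$ (with $R=2$ in the bounded range $\|u\|\le 2^N$): it makes $G_{R,N}\|u\|$ bounded, so each $u_\epsilon$-factor becomes essentially linear, $\|e^{\i u_\epsilon}-1\|\lesssim\|u\|$, and simultaneously turns the $u_0$-exponential into $e^{w_*(c_1\|u\|^{1+1/N})}$. Since at most one factor in each product is the $u_0$-factor, collecting the finitely many products yields a bound of the shape $\|u\|^{2^n+1}\,e^{w_*(b\|u\|^{1+1/N})}$, where $b=c_1$ is fixed by Step~1 alone and is therefore independent of $\vartheta$ and $N$.

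The main obstacle, and the reason for the parameters $\vartheta>1$ and $1+\tfrac1N$ in the statement, is reconciling this with the claimed right-hand side $c\,\|u\|\,e^{\vartheta w_*(b\|u\|^{1+1/N})}$, which is linear in $\|u\|$. The surplus factor $\|u\|^{2^n}$, together with the polynomial-in-$(Rr+1)$ prefactors accumulated in Step~1, must be absorbed into the exponential. Here I would exploit the slow growth of $w_*$: for $\|u\|>1$ one has $w_*(b\|u\|^{1+1/N})\gtrsim\log\|u\|\,\log\log\|u\|$, which dominates any fixed multiple of $\log\|u\|$, so $\|u\|^{2^n}=e^{2^n\log\|u\|}\le e^{(\vartheta-1)w_*(b\|u\|^{1+1/N})}$ once the slack $\vartheta-1>0$ is fixed (and the bounded range $\|u\|\le 2^N$ is absorbed into $c$). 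This is precisely where the argument is softer than in the Gevrey case: there the incomplete-Gamma structure of $F_R$ forced the sharp balancing of \eqref{est1} and \eqref{est2} via Lemma~\ref{lma45brs}, whereas for $\mathcal{U M}^{p,q}$ the polynomial decay $G_{R,N}\asymp R^{-N}$ makes the choice of $R$ elementary and leaves the absorption of polynomial factors as the one step requiring genuine care.
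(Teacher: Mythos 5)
Your proposal is correct and follows essentially the same route as the paper: the same two-step structure (localized estimate on $P_R$ via Taylor expansion, Theorem~\ref{algebraUltra} and the weight-sum integral; then the phase-space decomposition with Lemma~\ref{lma46brs}, Proposition~\ref{subalgebraUltra} and the choice $R\asymp\|u\|_{\mathcal{U M}^{p,q}}^{1/N}$ making $G_{R,N}\|u\|_{\mathcal{U M}^{p,q}}$ bounded). The only cosmetic difference is that the paper absorbs the polynomial prefactors into $e^{(\vartheta-1)w_*(\cdot)}$ already in Step~1 (via its constant $C_4$), whereas you defer all such absorptions to the end; the mechanism — super-polynomial growth of $e^{w_*(t)}$ exploited through the slack $\vartheta>1$ — is identical.
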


\begin{proof}
We follow the proof of Lemma \ref{estSuper}. 
\\
{\em Step 1.}
Thus, we let $u\in \mathcal{U M}^{p,q}(\R^n)$ satisfying $\supp \F (u) \subset P_R$. Again we employ the Taylor expansion of 
$e^{\i u}$ to get the norm estimate
\[ 
\| e^{\i u}-1\|_{\mathcal{U M}^{p,q}} \leq \Big\| \sum_{l=1}^r \frac{(\i u)^l}{l!} 
\Big\|_{\mathcal{U M}^{p,q}} + \Big\| \sum_{l=r+1}^{\infty} \frac{(\i u)^l}{l!} \Big\|_{\mathcal{U M}^{p,q}} =: S_1 + S_2. 
\]
After some computations analogously to the proof of Lemma \ref{estSuper}, see in particular \eqref{ws-08}, we obtain
\[ S_2 \le C_2\,  \|u\|_{\mathcal{U M}^{p,q}}\, , \qquad C_2:= \Big(C_1 \, e, \frac{3}{3-e}\Big), \]
where this time $C_1$ means the algebra constant with respect to $\mathcal{U M}^{p,q}(\R^n)$, see Theorem \ref{algebraUltra}.
Concerning $S_1$ we conlude
\begin{eqnarray*}
S_1 & \leq & \Big( \sum_{\substack{k\in\Z^n, \\ -Rr-1<k_i< Rr+1, \\ i=1,\ldots,n}} e^{w_*(|k|)q} 
\|\Box_k (e^{\i u}-1) \|_{L^p}^q \Big)^{\frac{1}{q}} + S_2\, .
\end{eqnarray*}
Because of 
\begin{eqnarray*}
\Big( \sum_{\substack{k\in\Z^n, \\ -Rr-1<k_i< Rr+1, \\ i=1,\ldots,n}} e^{w_*(|k|)q} \Big)^{\frac{1}{q}} 
\le c_1 \, e^{w_*(\sqrt{n}(Rr+1))}\, (\sqrt{n}(Rr+1))^{n/q}
\end{eqnarray*}
(here $c_1$ depends on $n$ only)
we find
\[ 
S_1 \le (2 \, C_3\, C_4\,  C_5 \, e^{\vartheta\, w_*(\sqrt{n}(Rr+1))} + C_2) \, \|u\|_{\mathcal{U M}^{p,q}},
\]
where $C_2$ has the above meaning and  $C_3$ has the original meaning from the proof
of Lemma \ref{estSuper}. The definitions of other two constants have to be modified.  $C_5 $ is defined as
\[
C_5 := \Big( \sum_{k\in\Z^n} e^{-w_*(|k|)q'} \Big)^{\frac{1}{q'}}< \infty\, ,
\] 
whereas  $C_4$ is now given by 
\[
C_4 := \sup_{r \in \N} \, \sup_{R \geq 2}\, c_1 \,   n^{\frac{n}{2q}}\, (Rr+1)^{n/q}
 e^{- (\vartheta-1)\, w_*(\sqrt{n}(Rr+1))} < \infty \, .
\]
Recall that we had choosen $r$ such that 
\[ 
3\, C_1\, \|u\|_{\mathcal{U M}^{p,q}} \leq r \leq 3\, C_1\,  \|u\|_{\mathcal{U M}^{p,q}} +1 \, , 
\]
see \eqref{ws-11}.
By means of this we can rewrite our estimate of $\| e^{\i u}-1\|_{\mathcal{U M}^{p,q}} $
and get
\begin{equation} \label{ws-30}
\| e^{\i u}-1 \|_{\mathcal{U M}^{p,q}} \leq c_0 
\|u\|_{\mathcal{U M}^{p,q}} \left( 1 + e^{\vartheta\, w_* ( b_0\, R\, \| u\|_{\mathcal{U M}^{p,q}})}
 \right)\, , 
\end{equation}
compare with \eqref{expest}, 
valid for all $u\in \mathcal{U M}^{p,q}(\R^n)$ satisfying $\supp \F(u) \subset P_R$ and with positive constants $b_0,c_0$ depending on $n,p$ 
and $q$ but independent of $u, r$ and $R$. 
\\
{\em Step 2.} Now we consider the general case. Let  $u\in \mathcal{U M}^{p,q}(\R^n)$. Again we decompose $u$ 
in the phase space according to Lemma \ref{estSuper}. Let $G_{R,N}$ be the constant in \eqref{ws-29}  and let $C_6$ be
as in proof of Lemma \ref{estSuper}. Then  it follows
\begin{eqnarray}
\label{ws-31b}
\| e^{\i u_{j_k}} -1\|_{\mathcal{U M}^{p,q}} & = & 
\Big\|\sum_{l=1}^\infty \frac{(\i u_{j_k})^l}{l!} \, \Big\|_{\mathcal{U M}^{p,q}}
\le 	\frac{1}{G_{R,N}} \Big(e^{G_{R,N}\,  \|u_{j_k}\|_{\mathcal{U M}^{p,q}}} -1 \Big) 
\nonumber 
\\
& \leq & \frac{1}{G_{R,N}} \Big( e^{G_{R,N}\,C_6\,  \|u\|_{\mathcal{U M}^{p,q}}} -1 \Big)\, , 
\end{eqnarray}
see \eqref{est1}, as well as
\begin{eqnarray}\label{ws-32b}
\| e^{\i u_0} -1\|_{\mathcal{U M}^{p,q}} & \leq &  c_0 \, C_6\,  \|u\|_{\mathcal{U M}^{p,q}} 
\Big( 1 + e^{\vartheta \, w_* (b_0 \, R\,  \| u\|_{\mathcal{U M}^{p,q}}) } \Big)\, ,  
\end{eqnarray}
see \eqref{est2}.
\\
{\em Substep 2.1.} Let  $\|u\|_{\mathcal{UM}^{p,q}} \leq 1$. We choose $R=3$. 
As in proof of Lemma \ref{estSuper} we conclude
\begin{equation}\label{ws-33}
 \|e^{\i u} -1\|_{\mathcal{UM}^{p,q}} \leq c_2 \, \|u\|_{\mathcal{UM}^{p,q}}, 
\end{equation}
where $c_2$ does not depend on $u$.\\
{\em Substep 2.2.} Let $\|u\|_{\mathcal{UM}^{p,q}} >1$. 
We know that the algebra constant $G_{R,N}$ in \eqref{ws-31b} 
is a function of $R$, more exactly,
\[
 G_{R,N} = \widetilde{C}_N \, R^{-N}\, .
\]
Taking into account that
	\begin{itemize}
		\item $G_{R,N}$ is strictly decreasing and positive  and
		\item $\displaystyle \lim_{R\to \infty} G_{R,N} = 0$
	\end{itemize}
we may choose $R>2$ according to 
\[ 
\Big(\frac{2}{R}\Big)^N = \frac{G_{R,N}}{G_{2,N}}  = \|u\|_{\mathcal{UM}^{p,q}}^{-1} \, .
\]
This means in particular
\[
G_{R,N}\,  \|u\|_{\mathcal{UM}^{p,q}} = \widetilde{C}_N \, 2^{-N}\qquad 
\mbox{and}\qquad R =   2\, \|u\|_{\mathcal{UM}^{p,q}}^{1/N} \, .
\]
Now \eqref{ws-31b}  and \eqref{ws-32b} result in  
\begin{eqnarray*}
&& \|e^{\i u} -1\|_{\mathcal{UM}^{p,q}}
\\
& \leq &  c_{3}\, \max_{\alpha \in \{0,1\}, \beta \in \{0, \ldots \, 2^n\}}
\, \bigg(c_0 \, C_6\,  \|u\|_{\mathcal{UM}^{p,q}} 
\Big( 1 + e^{\vartheta\, w_*(b_0\, R\, \| u\|_{\mathcal{UM}^{p,q}})} \Big) \bigg)^\alpha \\
& & \qquad \qquad \qquad \times \, \Big( \frac{e^{G_{R,N}\,C_6\,  \|u\|_{\mathcal{UM}^{p,q}}} -1}{G_{R,N}} \Big)^\beta
\\
& \le &  c_{4}\, \max_{\alpha \in \{0,1\}, \beta \in \{0, \ldots \, 2^n\}}
\, \bigg(\|u\|_{\mathcal{UM}^{p,q}} 
\Big( 1 + e^{\vartheta\,  w_* (b_0 \, 2\,  \| u\|_{\mathcal{UM}^{p,q}}^{1+\frac 1N})} \Big) \bigg)^\alpha \|u\|_{\mathcal{UM}^{p,q}}^\beta \\
& & \qquad \qquad \qquad \times \, \Big( \frac{e^{G_{2,N}\,C_6}\, -1}{G_{2,N}} \Big)^\beta
\\
&\le & c_5\, \|u\|_{\mathcal{UM}^{p,q}} \, \Big( 1 + 
e^{\vartheta \, w_* (b_0 \,2\,   \| u\|^{1+\frac 1N}_{\mathcal{UM}^{p,q}})} \Big)   
\end{eqnarray*}
with a constant $c_{5}$ depending on $\vartheta$ and $N$ but independent of $u$.
\end{proof}

\begin{lem} \label{ContExpUltra}
Let $1<p <\infty$ and  $1\le q \le \infty$.
Assume $u\in \mathcal{U M}^{p,q}(\R^n)$ to be fixed and define a function $g: \R \mapsto \mathcal{U M}^{p,q}(\R^n)$ by 
$g(\xi) = e^{\i u(x) \xi}-1$. Then the function $g$ is continuous.
\end{lem}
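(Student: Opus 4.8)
The plan is to follow verbatim the strategy of Lemma \ref{ContExp}(ii), replacing the Gevrey superposition estimate by its counterpart Lemma \ref{estSuperUltra} and the algebra property of $\mathcal{GM}_{p,q}^s$ by Theorem \ref{algebraUltra}. First I would fix $\xi_0 \in \R$ and aim to show continuity of $g$ at $\xi_0$. The algebraic identity \eqref{ws-15}, applied with the exponents $u\xi$ and $u\xi_0$ in place of $u$ and $v$, gives
\[
g(\xi) - g(\xi_0) = e^{\i u \xi} - e^{\i u \xi_0} = \big(e^{\i u \xi_0}-1\big)\big(e^{\i u(\xi-\xi_0)}-1\big) + \big(e^{\i u(\xi-\xi_0)}-1\big).
\]

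Next I would exploit the linear scaling $\|u(\xi-\xi_0)\|_{\mathcal{U M}^{p,q}} = |\xi-\xi_0|\,\|u\|_{\mathcal{U M}^{p,q}}$. For $\xi$ sufficiently close to $\xi_0$ this quantity is at most $1$, so the lower branch of Lemma \ref{estSuperUltra} applies and yields
\[
\big\|e^{\i u(\xi-\xi_0)}-1\big\|_{\mathcal{U M}^{p,q}} \le c\,|\xi-\xi_0|\,\|u\|_{\mathcal{U M}^{p,q}},
\]
which tends to $0$ as $\xi \to \xi_0$. The factor $e^{\i u\xi_0}-1$ is fixed and lies in $\mathcal{U M}^{p,q}(\R^n)$ with finite norm, again by Lemma \ref{estSuperUltra} applied to $u\xi_0$.

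Then, combining these via the submultiplicative algebra estimate of Theorem \ref{algebraUltra}, I would conclude
\[
\|g(\xi)-g(\xi_0)\|_{\mathcal{U M}^{p,q}} \le C\,\big\|e^{\i u\xi_0}-1\big\|_{\mathcal{U M}^{p,q}}\,\big\|e^{\i u(\xi-\xi_0)}-1\big\|_{\mathcal{U M}^{p,q}} + \big\|e^{\i u(\xi-\xi_0)}-1\big\|_{\mathcal{U M}^{p,q}},
\]
and both terms on the right vanish as $\xi \to \xi_0$. Since $\xi_0$ was arbitrary, continuity on all of $\R$ follows.

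I expect no serious obstacle here: the substantial work has already been invested in Lemma \ref{estSuperUltra} and Theorem \ref{algebraUltra}. The only point requiring a little care is the threshold in Lemma \ref{estSuperUltra}, namely ensuring $\|u(\xi-\xi_0)\|_{\mathcal{U M}^{p,q}} \le 1$; this is automatic once $|\xi-\xi_0| \le \|u\|_{\mathcal{U M}^{p,q}}^{-1}$ (the case $u=0$ being trivial), after which the linear bound in $|\xi-\xi_0|$ supplies the desired local modulus of continuity.
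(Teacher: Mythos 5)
Your argument is correct and follows exactly the route the paper intends: the paper's proof of this lemma simply says it is proved "in the same way as Lemma \ref{ContExp}", i.e.\ via the identity \eqref{ws-15}, the algebra property (here Theorem \ref{algebraUltra}), and the superposition estimate (here Lemma \ref{estSuperUltra}), which is precisely what you carry out, with the correct attention to the homogeneity $\|u(\xi-\xi_0)\|_{\mathcal{U M}^{p,q}}=|\xi-\xi_0|\,\|u\|_{\mathcal{U M}^{p,q}}$ and the threshold for the lower branch of the estimate.
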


\begin{proof}
This lemma can be proved  in the same way as Lemma \ref{ContExp}.
\end{proof}

\begin{thm} \label{SuperpositionUltra}
Let $1<p <\infty$, $1\le q \le \infty$,  $\varepsilon >0$ and $\vartheta >1$.
		Let $\mu$ be a complex measure on $\R$ such that
\begin{equation} \label{FourierEstUltra}
L_1(\lambda) := \int_{\R} e^{\vartheta\,  \log (\langle \lambda \, |\xi|^{1+\varepsilon} \rangle_*)\,   
\log\log (\langle \lambda |\xi|^{1+\varepsilon} \rangle_*)} \, d|\mu| (\xi) < \infty
\end{equation}
for any $\lambda >0 $ and such that $\mu(\R) = 0$. 
\\
Furthermore, assume that the function $f$ is the inverse Fourier transform of $\mu$. 
Then $f\in C^\infty$ and the composition operator $T_f: u \mapsto f \circ u$ maps $\mathcal{U M}^{p,q}$ into $\mathcal{U M}^{p,q}$.
\end{thm}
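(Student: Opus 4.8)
The plan is to run the proof of Theorem \ref{Superposition} essentially verbatim, substituting the $\mathcal{U M}^{p,q}$-machinery for the Gevrey one: Lemma \ref{estSuperUltra} replaces Lemma \ref{estSuper}, and Lemma \ref{ContExpUltra} replaces Lemma \ref{ContExp}. First I would extract the preliminary facts from the hypothesis \eqref{FourierEstUltra}. Since $w_*$ has range $[e,\infty)$, the integrand of \eqref{FourierEstUltra} is bounded below by $e^{\vartheta e}>1$, so $L_1(\lambda)<\infty$ forces $\int_\R d|\mu|(\xi)<\infty$; hence $\mu$ is finite and $\mu(\R)=0$ is meaningful. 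Because $e^{\vartheta w_*(t)}=\langle t\rangle_*^{\vartheta\log\log\langle t\rangle_*}$ outgrows every power of $t$, the same hypothesis yields $\int_\R|\xi|^j\,d|\mu|(\xi)<\infty$ for all $j\in\N$, so that $f=\F^{-1}\mu\in C^\infty$ by differentiation under the integral sign, and $\mu(\R)=0$ permits the representation $f(t)=\tfrac1{\sqrt{2\pi}}\int_\R(e^{\i\xi t}-1)\,d\mu(\xi)$.

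Next I would split $\mu$ into the four positive finite measures $\mu_r^\pm,\mu_i^\pm$, each dominated by $|\mu|$, and treat $\mu_r^+$ as the model case. For a fixed $u\in\mathcal{U M}^{p,q}(\R^n)$ the map $\xi\mapsto g(\xi)=e^{\i u(\cdot)\xi}-1$ is continuous into $\mathcal{U M}^{p,q}(\R^n)$ by Lemma \ref{ContExpUltra}, and since $\mu_r^+$ is finite, $g$ is Bochner integrable; the Bochner integral $\int_\R g(\xi)\,d\mu_r^+(\xi)$ (together with the three analogous integrals) recovers $\sqrt{2\pi}\,f\circ u$. Minkowski's integral inequality then reduces everything to a scalar integral,
\[
\Big\|\int_\R g(\xi)\,d\mu_r^+(\xi)\Big\|_{\mathcal{U M}^{p,q}}\le\int_\R\big\|e^{\i u(\cdot)\xi}-1\big\|_{\mathcal{U M}^{p,q}}\,d|\mu|(\xi),
\]
so it suffices to show this last integral is finite.

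The decisive step, and the one I expect to require the most care, is the use of the free parameter $N$ in Lemma \ref{estSuperUltra}. I would fix once and for all an integer $N>1/\varepsilon$, so that $1+\tfrac1N<1+\varepsilon$, and apply Lemma \ref{estSuperUltra} with the given $\vartheta$ to $\xi u$ in place of $u$. On the region $|\xi|\,\|u\|\ge1$ this bounds the integrand by $c\,|\xi|\,\|u\|\,e^{\vartheta w_*(b(|\xi|\,\|u\|)^{1+1/N})}$. Here is the \emph{matching}: because the power $1+\tfrac1N$ is \emph{strictly} smaller than $1+\varepsilon$, the leading growth $\log|\xi|\,\log\log|\xi|$ in the exponent of \eqref{FourierEstUltra} carries the strictly larger coefficient $\vartheta(1+\varepsilon)$, and this strictly dominates the coefficient $\vartheta(1+\tfrac1N)$ of the bound above; the surplus $\vartheta(\varepsilon-\tfrac1N)\log|\xi|\,\log\log|\xi|$ tends to infinity and therefore simultaneously absorbs the additive constants coming from $b$ and $\|u\|^{1+1/N}$ and the polynomial prefactor $|\xi|$. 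Consequently, for a suitable $\lambda>0$ and a threshold $R_0$ one has $c\,|\xi|\,\|u\|\,e^{\vartheta w_*(b(|\xi|\,\|u\|)^{1+1/N})}\le e^{\vartheta w_*(\lambda|\xi|^{1+\varepsilon})}$ for all $|\xi|\ge R_0$, whence \eqref{FourierEstUltra} yields finiteness of the integral over $\{|\xi|\ge R_0\}$; the bounded remaining range contributes at most a constant times $|\mu|(\R)<\infty$. The complementary region $|\xi|\,\|u\|<1$ is handled by the second alternative of Lemma \ref{estSuperUltra}, giving the linear bound $\|e^{\i u\xi}-1\|_{\mathcal{U M}^{p,q}}\le c\,|\xi|\,\|u\|\le c$ there, which integrates to at most $c\,|\mu|(\R)<\infty$. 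Summing the four contributions from $\mu_r^\pm,\mu_i^\pm$ proves $f\circ u\in\mathcal{U M}^{p,q}(\R^n)$ with finite norm. The only genuinely new point compared with Theorem \ref{Superposition} is precisely this calibration of the exponent $1+\tfrac1N$ produced by Lemma \ref{estSuperUltra} against the prescribed growth $|\xi|^{1+\varepsilon}$ in \eqref{FourierEstUltra}, which is exactly what dictates the choice $N>1/\varepsilon$.
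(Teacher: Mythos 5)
Your proposal is correct and follows essentially the same route as the paper, which simply says to repeat the proof of Theorem \ref{Superposition} with Lemma \ref{estSuperUltra} and Lemma \ref{ContExpUltra} in place of their Gevrey counterparts. In fact you supply the one detail the paper leaves implicit, namely the calibration $N>1/\varepsilon$ so that the exponent $1+\tfrac1N$ from Lemma \ref{estSuperUltra} is strictly dominated by the prescribed growth $|\xi|^{1+\varepsilon}$ in \eqref{FourierEstUltra}, and your absorption of the polynomial prefactor by the surplus $\vartheta(\varepsilon-\tfrac1N)\log|\xi|\log\log|\xi|$ is sound.
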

	\begin{proof}
		We exactly follow the proof of Theorem \ref{Superposition}. Using equation \eqref{FourierEstUltra}, 
Lemma \ref{ContExpUltra} and Lemma \ref{estSuperUltra} complete the proof.
	\end{proof}

Note the following conclusion.
	\begin{cor} \label{SuperpositionCorUltra}
	Let $1<p <\infty$ and  $1\le q \le \infty$.
		Let $\mu$ be a complex measure on $\R$ with the corresponding bounded density 
function $g$, i.e., $d\mu (\xi) = g(\xi) \, d\xi$. Suppose that
		\begin{equation*} 
			\lim_{|\xi|\to \infty} \frac{\log (\langle |\xi| \rangle_*) 
\log\log (\langle |\xi| \rangle_*) }{\log |g(\xi)|} = 0
		\end{equation*}
		and $\displaystyle \int_{\R} d\mu (\xi) = \int_{\R} g(\xi) \, d\xi = 0$. 
Assume the function $f$ to be the inverse Fourier transform of $g$. Then $f\in C^\infty$ and the composition operator 
$T_f: u \mapsto f \circ u$ maps $\mathcal{U M}^{p,q}$ into $\mathcal{U M}^{p,q}$.
\end{cor}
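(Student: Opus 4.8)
The plan is to reduce everything to Theorem \ref{SuperpositionUltra}. Since the parameters $\varepsilon>0$ and $\vartheta>1$ appearing there are at our disposal, I would fix them once and for all (say $\vartheta=2$, $\varepsilon=1$) and show that the density hypothesis of the corollary forces condition \eqref{FourierEstUltra} to hold for every $\lambda>0$. Writing $w_*(|\xi|)=\log\langle|\xi|\rangle_*\,\log\log\langle|\xi|\rangle_*$, the assumption reads $w_*(|\xi|)/\log|g(\xi)|\to 0$. As $w_*(|\xi|)\ge e>0$, this forces $|\log|g(\xi)||\to\infty$, and since $g$ is bounded we must have $\log|g(\xi)|\to-\infty$, i.e. $g(\xi)\to 0$. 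More importantly, the limit being zero means that for every prescribed $A>0$ there is an $N=N(A)$ with $-\log|g(\xi)|\ge A\,w_*(|\xi|)$, hence
\[
|g(\xi)|\le e^{-A\,w_*(|\xi|)}\,,\qquad |\xi|>N\,.
\]

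The one genuinely new ingredient, compared with the proof of Corollary \ref{SuperpositionCor}, is a scaling comparison for the weight $w_*$; in the Gevrey case the parameter $\lambda$ sat outside the weight, whereas here both $\lambda$ and the power $1+\varepsilon$ sit \emph{inside} $w_*$. I would show that for fixed $\lambda>0$ and $\varepsilon>0$,
\[
\frac{w_*(\lambda|\xi|^{1+\varepsilon})}{w_*(|\xi|)}\longrightarrow 1+\varepsilon\qquad(|\xi|\to\infty)\,,
\]
which is elementary: since $\langle t\rangle_*\asymp t$, one has $\log\langle\lambda|\xi|^{1+\varepsilon}\rangle_*/\log\langle|\xi|\rangle_*\to 1+\varepsilon$ while $\log\log\langle\lambda|\xi|^{1+\varepsilon}\rangle_*/\log\log\langle|\xi|\rangle_*\to 1$. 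In particular there are a constant $K=K(\lambda,\varepsilon)$ and an $N_0$ such that $w_*(\lambda|\xi|^{1+\varepsilon})\le K\,w_*(|\xi|)$ for $|\xi|>N_0$.

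Combining the two estimates, I would choose $A>\vartheta K$ (this is where $A$, and hence $N$, depends on $\lambda$, but that is harmless, since \eqref{FourierEstUltra} is required separately for each $\lambda$). Then on $|\xi|>\max(N,N_0)$ the integrand of $L_1(\lambda)$ is bounded by
\[
e^{\vartheta\,w_*(\lambda|\xi|^{1+\varepsilon})}\,|g(\xi)|\le e^{(\vartheta K-A)\,w_*(|\xi|)}\le e^{-w_*(|\xi|)}\,.
\]
Since $w_*(|\xi|)/\log|\xi|\to\infty$ as $|\xi|\to\infty$, the bound $e^{-w_*(|\xi|)}$ decays faster than any negative power of $|\xi|$, so its integral over the tail converges; the contribution of the bounded region $|\xi|\le\max(N,N_0)$ is finite because $g$ is bounded and the exponential factor is continuous there. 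Hence $L_1(\lambda)<\infty$ for every $\lambda>0$, and together with $\mu(\R)=0$ this is exactly the hypothesis of Theorem \ref{SuperpositionUltra}, from which the claim follows.

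I expect no serious obstacle: the argument is the verbatim analogue of the proof of Corollary \ref{SuperpositionCor}, the only delicate point being the asymptotic comparison of $w_*(\lambda|\xi|^{1+\varepsilon})$ with $w_*(|\xi|)$ and the bookkeeping that the comparison constant $K$ is permitted to depend on $\lambda$.
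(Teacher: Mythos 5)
Your argument is correct and follows essentially the same route as the paper: the paper's proof is a one-line reduction to Theorem \ref{SuperpositionUltra} via the elementary inequality $w_*(|\xi|^{1+\varepsilon})\le(1+\varepsilon)\log(\langle|\xi|\rangle_*)\bigl(\varepsilon+\log\log(\langle|\xi|\rangle_*)\bigr)$ combined with the tail estimate $|g(\xi)|\le e^{-A\,w_*(|\xi|)}$ exactly as in Corollary \ref{SuperpositionCor}, and your asymptotic comparison $w_*(\lambda|\xi|^{1+\varepsilon})/w_*(|\xi|)\to 1+\varepsilon$ is just a slightly different (and equally valid) way of packaging that same inequality while making the $\lambda$-dependence explicit. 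The only cosmetic point is that to get the bound $e^{-w_*(|\xi|)}$ you should take $A\ge\vartheta K+1$ rather than merely $A>\vartheta K$, though any choice $A>\vartheta K$ already yields integrability since $w_*(|\xi|)/\log|\xi|\to\infty$.
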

\begin{proof}
Without loss of generality let $0<\varepsilon < 1$.
Making use of the elementary inequality 
\[
w_* (|\xi|^{1+\varepsilon}) \le (1+\varepsilon)\, \log (\langle |\xi| \rangle_*)\, \Big(\varepsilon + 
\log\log (\langle |\xi| \rangle_*)\Big) 
\]
we can follow the proof of  Corollary \ref{SuperpositionCor}. 
\end{proof}

\subsection*{One  Example} 

By $\mathcal{X}$ we denote the characteristic function of the interval $[0,1)$. Then we define the special function

\[
{\rm up} (x) := \lim_{r \to \infty} \Big(\mathcal{X} * (2\, \mathcal{X} (2\, \cdot \, ) * \ldots * (2^r\, \mathcal{X} (2^r\, \cdot \, )\Big)(x)
\, , \quad x \in \R\, ,
\]
originally introduced by the brothers Rvatchev \cite{RR}. A good source represents the survey article \cite{rv}. 
This function has a number of nice properties, e.g., 
\[
{\rm up}' (x) = 2 {\rm up} (2x+1) - 2{\rm up} (2x-1)\, , \quad x \in \R\, .
\]
Here we need that  ${\rm up} \in C_0^\infty (\R)$, ${\rm up} \ge 0$ on $\R$, 
$ {\rm up} (x)>0 $ if $ 0< x < 2$, and 
\[
\F {\rm up} (\xi) = \frac{e^{-i\xi}}{\sqrt{2\pi}} \, \prod_{j=1}^\infty \frac{\sin 2^{-j}\xi}{2^{-j}\xi}\, , \quad \xi \in \R\, .
\]
It is an exercise to prove the decay estimate
\begin{eqnarray*}
|\F {\rm up} (\xi)| & \le &  \frac{1}{\sqrt{2\pi}}\, |\xi|^{1- (\log_2 |\xi|)/2}
\\
 & =  &  \frac{1}{\sqrt{2\pi}}\, e^{(1- (\log_2 |\xi|)/2) \, \log |\xi|} \qquad \mbox{if}\quad |\xi|\ge 1\, .
\end{eqnarray*}

\begin{cor} \label{ex2}
Let $1<p <\infty$ and  $1\le q \le \infty$.
Then the  composition operator 
$T_{{\rm up}}: u \mapsto {\rm up} \circ u$ maps $\mathcal{U M}^{p,q}$ into $\mathcal{U M}^{p,q}$.
\end{cor}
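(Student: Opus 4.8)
The plan is to deduce the assertion directly from Corollary~\ref{SuperpositionCorUltra}, in exactly the manner in which Corollary~\ref{ex1} was obtained from Corollary~\ref{SuperpositionCor}. First I would set $g := \F\, {\rm up}$, so that by Fourier inversion ${\rm up} = \F^{-1} g$ is the inverse Fourier transform of $g$. Boundedness of $g$ is immediate from ${\rm up} \in L^1(\R)$, and the recorded decay estimate shows $g$ decays faster than any power, hence $g \in L^1(\R)$ as well; thus $d\mu(\xi) := g(\xi)\,d\xi$ defines a genuine finite complex measure with bounded density. The task then reduces to verifying the two hypotheses of Corollary~\ref{SuperpositionCorUltra}: the vanishing moment $\int_\R g(\xi)\,d\xi = 0$ and the limit condition governing the decay of $g$.

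For the moment condition I would exploit that ${\rm up} \in C_0^\infty(\R)$ is supported in $[0,2]$ with ${\rm up}(x) > 0$ precisely on $0 < x < 2$; by smoothness and compact support this forces ${\rm up}(0) = 0$. Since $g \in L^1(\R)$, Fourier inversion evaluated at the origin then gives
\[
\frac{1}{\sqrt{2\pi}} \int_\R g(\xi)\, d\xi = {\rm up}(0) = 0\, ,
\]
which is exactly the required vanishing of the moment.

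The only genuine computation, and hence the main (though still routine) point, is the decay condition. From the estimate $|\F\,{\rm up}(\xi)| \le \frac{1}{\sqrt{2\pi}}\, e^{(1-(\log_2|\xi|)/2)\log|\xi|}$ for $|\xi|\ge 1$ I would read off, writing $L := \log|\xi|$ and using $\log_2|\xi| = L/\log 2$,
\[
\log|g(\xi)| \le -\log\sqrt{2\pi} + L - \frac{L^2}{2\log 2}\, ,
\]
so that $\log|g(\xi)|$ is negative for large $|\xi|$ and of order $-L^2$. Since $\langle|\xi|\rangle_* \asymp |\xi|$ as $|\xi|\to\infty$, the numerator satisfies $\log(\langle|\xi|\rangle_*)\,\log\log(\langle|\xi|\rangle_*) \asymp L\,\log L$, whence the quotient appearing in Corollary~\ref{SuperpositionCorUltra} is, in modulus, of order $(L\log L)/L^2 = (\log L)/L \to 0$ as $|\xi|\to\infty$. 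Thus the limit condition holds, and an application of Corollary~\ref{SuperpositionCorUltra} with $f = {\rm up}$ completes the proof. I do not anticipate any real obstacle, since every ingredient is either elementary or already furnished by the example's preamble; the decay estimate for $\F\,{\rm up}$ does the decisive work.
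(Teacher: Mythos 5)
Your proposal is correct and follows exactly the route the paper intends: verify the hypotheses of Corollary \ref{SuperpositionCorUltra} for $g=\F\,{\rm up}$, using ${\rm up}(0)=0$ for the vanishing integral (just as ${\phi_\mu}(0)=0$ was used for Corollary \ref{ex1}) and the recorded decay estimate, whose $-\log^2|\xi|$ order dominates the $\log\langle|\xi|\rangle_*\log\log\langle|\xi|\rangle_*$ numerator. Nothing further is needed.
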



\section{Concluding Remarks and Open Questions} \label{finalcomments}


\begin{itemize}
 \item 
The main issue of this paper has been  to explain non-analytic superposition on modulation spaces with subexponential  weights.
Up to now we have got results in two special situations, namely in Section \ref{Gevreysuperposition} we studied Gevrey-modulation spaces, in 
Section \ref{Ultradifferentiablesuperposition} we studied $\mathcal{UM}^{p,q}$. In a forthcoming project we would like to extend the theory 
of non-analytic superpositions to more general modulation spaces of ultra-differentiable type.  
\item
In an other  forthcoming project we will study superposition operators on modulation spaces of Sobolev type, too. 
Here two properties seem to be essential for this issue.

\begin{prop} \label{embedding}(\cite{Sugi})
		Let $1\leq p,q \leq \infty$. If $s >\frac{n}{q'}$, where $q'$ is the conjugate to $q$, then 
$M_{p,q}^{s} (\R^n)$ is an algebra with respect to pointwise multiplication. 
\end{prop}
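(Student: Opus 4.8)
The plan is to mimic the proof of Theorem \ref{algebra}, replacing the subexponential weight $e^{|k|^{1/s}}$ by the polynomial weight $\langle k\rangle^s$ and replacing Lemma \ref{weightEstimate} by the elementary Peetre-type inequality $\langle k\rangle^s \lesssim \langle k-l\rangle^s + \langle l\rangle^s$, which is valid for $s\ge 0$ (note that $s>n/q'\ge 0$). Abbreviating $f_j:=\Box_j f$ and $g_l:=\Box_l g$, I first record that $\supp\F(f_j g_l)$ lies within $\ell^\infty$-distance $2$ of $j+l$, so that $\Box_k(f_j g_l)$ vanishes unless $\|k-(j+l)\|_\infty<3$. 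Exactly as in \eqref{ws-03}, Young's inequality for the convolution with $\F^{-1}\sigma_k$ then yields
\[
\|\Box_k(f\cdot g)\|_{L^p} \le c \sum_{\substack{t\in\Z^n,\\ \|t\|_\infty \le 2}} \ \sum_{l\in\Z^n} \|f_{t-(l-k)}\, g_l\|_{L^p}.
\]

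Next I would use Nikol'skij's inequality (Lemma \ref{nikolskij}): since each $\Box_m h$ has Fourier support in a fixed-size ball, $\|\Box_m h\|_{L^\infty}\lesssim \|\Box_m h\|_{L^p}$, whence $\|f_j g_l\|_{L^p}\le \|f_j\|_{L^p}\|g_l\|_{L^\infty}\lesssim \|f_j\|_{L^p}\|g_l\|_{L^p}$. Writing $j=t+k-l$, so that $\langle k-l\rangle\asymp\langle j\rangle$ up to the bounded shift $t$, the Peetre inequality gives
\[
\langle k\rangle^s \|f_j\|_{L^p}\|g_l\|_{L^p} \lesssim \big(\langle j\rangle^s\|f_j\|_{L^p}\big)\,\|g_l\|_{L^p} + \|f_j\|_{L^p}\,\big(\langle l\rangle^s\|g_l\|_{L^p}\big).
\]
The two resulting terms are symmetric (the weight sitting on $f$ in one and on $g$ in the other), so it suffices to treat the first.

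Setting $a_m:=\langle m\rangle^s\|\Box_m f\|_{L^p}$ and $b_m:=\langle m\rangle^s\|\Box_m g\|_{L^p}$, the first term contributes, for each fixed $t$, the sequence $k\mapsto \sum_l a_{t+k-l}\,\langle l\rangle^{-s} b_l$, which is a shift of the convolution $a*d$ with $d_m:=\langle m\rangle^{-s} b_m$. Young's inequality for sequences gives $\|a*d\|_{\ell^q}\le\|a\|_{\ell^q}\|d\|_{\ell^1}$, and here is the decisive point: by H\"older,
\[
\|d\|_{\ell^1} = \sum_m \langle m\rangle^{-s} b_m \le \Big(\sum_m \langle m\rangle^{-sq'}\Big)^{1/q'}\|b\|_{\ell^q},
\]
and the series $\sum_m\langle m\rangle^{-sq'}$ converges precisely because $sq'>n$, i.e.\ $s>n/q'$. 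Thus $\|d\|_{\ell^1}\lesssim\|g\|_{M_{p,q}^s}$ while $\|a\|_{\ell^q}=\|f\|_{M_{p,q}^s}$, and summing the two symmetric contributions over the $5^n$ admissible values of $t$ gives $\|f\cdot g\|_{M_{p,q}^s}\lesssim\|f\|_{M_{p,q}^s}\|g\|_{M_{p,q}^s}$.

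The only genuinely delicate point is the role of the hypothesis $s>n/q'$: it is exactly the threshold making $\langle\cdot\rangle^{-s}\in\ell^{q'}$, which is what Young's inequality needs in order to absorb one factor in $\ell^1$. The remaining work is pure bookkeeping, as in Theorem \ref{algebra}: one must justify the rearrangement of the a priori only formal double series $\sum_{j,l} f_j g_l$. This is handled by noting that $s>n/q'$ also forces $\sum_m\|\Box_m f\|_{L^\infty}\lesssim \|\langle\cdot\rangle^{-s}\|_{\ell^{q'}}\|f\|_{M_{p,q}^s}<\infty$, so the series converges unconditionally in $C_{ub}(\R^n)$, and then invoking the Fatou property (the analogue of Lemma \ref{basic}(iii) for $M_{p,q}^s$) to reduce the estimate to the uniform boundedness of finite partial sums. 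This reproduces the result of \cite{Sugi}.
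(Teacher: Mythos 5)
Your argument is correct, but note that the paper itself offers no proof of this proposition: it is quoted verbatim from Sugimoto--Tomita--Wang \cite{Sugi}, so there is no internal proof to compare against. What you have produced is a self-contained derivation that transplants the machinery of Theorem \ref{algebra} to the polynomial-weight setting: the support observation $\|k-(j+l)\|_\infty<3$, the Young/Nikol'skij reductions, and the Fatou-property justification of the formal double series are all exactly parallel, while the genuinely new ingredient is the substitution of the Peetre-type inequality $\langle k\rangle^s\lesssim\langle k-l\rangle^s+\langle l\rangle^s$ for Lemma \ref{weightEstimate}. This is the right replacement, and your identification of where $s>n/q'$ is used is accurate and is the heart of the matter: in the subexponential case the factor $e^{-\delta\min(\cdot,\cdot)^{1/s}}$ supplies an $\ell^{q'}$-summable gain for free (see \eqref{ws-25}), whereas with a polynomial weight the additive splitting leaves a bare factor $\langle\cdot\rangle^{-s}$ on one of the two slots, and its membership in $\ell^{q'}(\Z^n)$ is precisely the condition $sq'>n$. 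The endpoint cases $q=1$ ($q'=\infty$, any $s>0$) and $q=\infty$ ($q'=1$, $s>n$) are covered by the same H\"older step. One cosmetic remark: rather than Nikol'skij plus $\|f_jg_l\|_{L^p}\le\|f_j\|_{L^p}\|g_l\|_{L^\infty}$ you could, as in \eqref{ws-03}, keep the bilinear H\"older estimate with $\frac1p=\frac1{p_1}+\frac1{p_2}$ and obtain the slightly more general bilinear statement $M^s_{p_1,q}\cdot M^s_{p_2,q}\hookrightarrow M^s_{p,q}$, from which the algebra property follows by the embedding of Corollary \ref{einbettung}'s analogue; but for the proposition as stated your route is complete.
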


\noindent The condition $s >\frac{n}{q'}$  also implies $M^s_{p,q} (\R^n) \subset L^\infty(\R^n)$, see \cite{Sugi}. 
In some sense we believe that for the study of superposition operators on modulation spaces of Sobolev type this condition is a threshold
for inner functions in a similar way as for the classical Sobolev spaces itself, see Bourdaud \cite{Bou2},  \cite[5.2.4]{runstSickel}
or Bourdaud, S. \cite{BS}.

\item
We are able to apply the superposition results to handle nonlinear partial differential equations.
By the concepts of \cite{brs} we can investigate the solutions of \eqref{Gevrey2} and \eqref{Gevrey4} which were introduced 
in Section \ref{introduction}. We expect local (in time) existence results for data belonging to suitable Gevrey-modulation spaces.
We can also apply the modulation spaces of Section \ref{Ultradifferentiablesuperposition} if the coefficient $a=a(t)$ in 
\eqref{Gevrey4} has a suitable modulus of continuity behavior. However within the scope of this paper only the relevant 
tools are provided. In future work we will study the existence of locally (in time) and globally (in time) solutions.
\end{itemize}

\subsection*{Acknowledgment}
The first author wants to express his gratitude to Professor Toft from Linnaeus University V\"axj\"o in Sweden 
for being his supervisor during the master studies and introducing him to the field of modulation spaces. 
Moreover, the first author gratefully acknowledge the fruitful and constructive discussions on this field 
with Professor Toft. 


\end{document}